\newenvironment{proof}{{\it Proof :~}}{\hfill$\square$\\}
\newcommand{\norm}[1]{\left\lvert#1\right\rvert}
\newcommand{\sigmaU}[1]{\bar{\sigma}(#1)}
\newcommand{\sigmaD}[1]{\underline{\sigma}(#1)}
\newcommand{\dth}{\mathrm{d}\theta}
\newcommand\eps{\varepsilon}
\newcommand\Pn{\mathbf{P}_{\!\!n}}
\newtheorem{theorem}{Theorem}
\newtheorem{definition}{Definition}
\newtheorem{lemma}{Lemma}
\newtheorem{corollary}{Corollary}
\newtheorem{remark}{Remark}
\newtheorem{property}{Property}
\newtheorem{example}{Example}
\renewcommand*{\@opargbegintheorem}[3]{\trivlist
  \item[\hskip \labelsep{\bfseries #1\ #2}] \textbf{(#3)}\hfill\newline \itshape}
\definecolor{darkred}{rgb}{0.8, 0.25, 0.33}
\begin{document}

\begin{frontmatter}

\title{On the necessity of sufficient LMI conditions for time-delay systems arising from Legendre approximation\thanksref{footnoteinfo}}

\thanks[footnoteinfo]{Corresponding author M.~Bajodek}

\author[LAAS]{Mathieu Bajodek}\ead{mbajodek\@ laas.fr},
\author[LAAS,Seville]{Alexandre Seuret},
\author[LAAS]{Frédéric Gouaisbaut}

\address[LAAS]{LAAS-CNRS, Univ de Toulouse, UPS, 7 avenue du Colonel Roche, 31077 Toulouse, France}
\address[Seville]{Univ. de Sevilla, Camino de los Descubrimientos, s/n 41092 Sevilla, Spain}
\begin{keyword}     
Systems with time-delay; Infinite-dimensional systems; Stability analysis; Lyapunov functionals; LMI; Polynomial approximation.
\end{keyword}               

\begin{abstract}
This work is dedicated to the stability analysis of time-delay systems with a single constant delay using the Lyapunov-Krasovskii theorem. This approach has been widely used in the literature and numerous sufficient conditions of stability have been proposed and expressed as linear matrix inequalities (LMI). The main criticism of the method that is often pointed out is that these LMI conditions are only sufficient, and there is a lack of information regarding the reduction of the conservatism. Recently, scalable methods have been investigated using Bessel-Legendre inequality or orthogonal polynomial-based inequalities. The interest of these methods relies on their hierarchical structure with a guarantee of reduction of the level of conservatism. However, the convergence is still an open question that will be answered for the first time in this paper. The objective is to prove that the stability of a time-delay system implies the feasibility of these scalable LMI, at a sufficiently large order of the Legendre polynomials. Moreover, the proposed contribution is even able to provide an analytic estimation of this order, giving rise to a necessary and sufficient LMI for the stability of time-delay systems. 
\end{abstract}

\end{frontmatter}

\section{Introduction}

Time-delay systems represent a wide class of dynamical systems arising in many applications in electronics, biology, transport, etc...  Their interest in automatic control is natural since they propose many challenging theoretical problems related to their intrinsic infinite-dimensional nature \cite{niculescu2001delay,sipahi2011stability}. In particular, their stability analysis has been at the heart of many research works for several decades~\cite{Fridman2014,Gu2003,Kharitonov2013,Richard2003,zhang2019overview} and many methods have been developed. Among them, the use of Lyapunov-Krasovskii theorem remains one of the most popular techniques because of its inherent robustness.
\newline
In the context of linear time-invariant (LTI) delay systems, several conditions have been obtained over the last two decades by application of the Lyapunov-Krasovskii theorem leading generally to sum of square constraints~\cite{Peet2019,Peet2009,Valmorbida2016} or to linear matrix inequalities (LMI)~\cite{fridman2001stability,he2004delay,xu2008survey}. These conditions, combined with semi-definite convex optimization programs, only led to sufficient conditions. Hence, numerous works aimed at reducing the inherent conservatism and at recovering the necessity using, for instance, discretized functionals~\cite{Gu2013,Gu2003} or delay partitioning methods~\cite{du2009delay,Gouaisbaut2006b}. Among them, an approach based on state extension has been proposed in~\cite{Seuret2015Hierarchy}. The candidate Lyapunov-Krasovskii functional therein depends directly on the projections of the state of the delay system onto Legendre polynomials. This approach led to LMI stability conditions, which benefit from a particular hierarchical structure, arising from the use of the Bessel-Legendre inequality. Similar approaches based on orthogonal polynomials \cite{Lee2019Bessel,park2015auxiliary} have been also considered in the literature. The complexity of these LMI increases with the number of Legendre polynomials $n$ taken under consideration but their conservatism drastically reduces as $n$ increases, at least on examples. However, to the best of our knowledge, the proof of convergence of these LMI to necessary and sufficient LMI conditions of stability is still missing.
\newline
Towards this direction, as for LTI finite-dimensional systems, there exists a converse Lyapunov-Krasovskii theorem for LTI systems with a constant delay. Indeed, it is possible to build a complete Lyapunov-Krasovskii functional for stable LTI delay systems, see for instance \cite{Kharitonov2013} for a larger overview on the problem. While this method has been only seen as a theoretical contributions, the authors of~\cite{Zhabko2015,Zhabko2019} paved the way to use the converse Lyapunov-Krasovskii theorem to derive not sufficient but necessary stability conditions for time-delay systems through approximation. Among them, the authors of \cite{Mondie2014} provided a necessary stability conditions for time-delay systems. This method was then extended to various classes of delay systems, see for instance \cite{Mondie2016,Mondie2018c}. More interestingly, their methods have led to necessary and sufficient stability conditions in \cite{Mondie2017,Mondie2021}. The sufficiency part of their necessary condition has been obtained by estimating an upper bound of the approximation error due to the discretization process of the Lyapunov matrix. It is worth noting that these conditions are not LMI but only a test of positive definiteness of a given matrix, which makes their method efficient from computational reasons. However, even though the necessary condition of stability is simple to test, the sufficient condition of stability is still numerically complicated to verify, because the estimated order assessing the sufficiency can be very high.

The objective of the present paper follows the same spirit of deriving necessary and sufficient stability conditions  of time-delay systems but with a different objective. Indeed, we aim at using the approach developed in \cite{Mondie2021,Gu2013} to prove that the sufficient LMI conditions arising from the Bessel-Legendre inequality are asymptotically necessary, as the degree $n$ of the Legendre polynomial to be considered increases. This corresponds to answer the following questions. 
\begin{itemize}
    \item If a time-delay system is stable, does there exist an order $N^\ast$ for which some LMI stability conditions are necessary satisfied?
    \item Is it possible to estimate this order $N^\ast$?
\end{itemize}

The paper is organized as follows. Section~\ref{sec1} formulates the problem of stability analysis of LTI systems subject to a single point-wise constant delay. After presenting a brief summary of the properties of Legendre polynomials, sufficient scalable LMI stability conditions are proposed. Then, Section~\ref{sec2} proves the converse theorem of this first result, by ensuring the existence of an order $N^\ast$ for which the LMI stability conditions must be verified for stable time-delay systems. Section~\ref{sec3} goes beyond the existence of an order by providing an estimation of the order $N^\ast$ of the polynomial approximation for which the LMI stability conditions are guaranteed. Finally, the theoretical results are evaluated on academic examples leading to several discussions. The paper ends with a conclusion and several appendices gathering the technical proofs required to derive the main results.

\vspace{0.2cm}

\emph{Notations :} Throughout the paper, $\mathbb{N}$ ($\mathbb{N}^{\ast}$), $\mathbb{R}^{m\times p}$ and $\mathbb{S}^m$ ($\mathbb{S}^m_+$) denote the sets of natural numbers (excluding zero), real matrices of size $m\times p$ and symmetric matrices of size $m$ (positive definite), respectively. For any $x$ in $\mathbb{R}$, $\lceil x\rceil$ stands for the least integer greater than or equal to $x$. For any square matrix $M\in\mathbb{R}^{m\times m}$, $M(p,q)$ denotes the entries of $M$ located at the $p^{th}$ row and $q^{th}$ column, $M^\top$ denotes the transpose of $M$, $\mathcal{H}(M)$ stands for $M+M^\top$ and inequality $M\succ 0$ means that $M\in\mathbb{S}^m_+$. Furthermore, for any $M$ in $\mathbb{S}^m$, its minimal and maximal eigenvalues are denoted $\sigmaD{M}$ and $\sigmaU{M}$. The 2-norm of matrix $M$ in $\mathbb{R}^{m\times p}$ is $\norm{M}=\sqrt{\sigmaU{M^\top M}}$. The vector $u=\mathrm{vec}(M)$ in $\mathbb{R}^{mp\times 1}$ collocates the columns of matrix $M$ in $\mathbb{R}^{m\times p}$ and the inverse of this operation is denoted $\mathrm{vec}^{-1}$ and is such that $\mathrm{vec}^{-1}(\mathrm{vec}(M))=M$.  Moreover, notation $\otimes$ represents the Kronecker product so that, for any matrices $(M_1,M_2)$ in $\mathbb{R}^{m_1\times p_1}\times \mathbb{R}^{m_2\times p_2}$, $M_1\otimes M_2=\begin{bsmallmatrix} M_1(1,1)M_2 & \dots &  M_1(1,p_1)M_2\\\vdots & \ddots & \vdots\\  M_1(m_1,1)M_2 & \dots & M_1(m_1,p_1)M_2 \end{bsmallmatrix}$ in $\mathbb{R}^{m_1m_2\times p_1p_2}$. The set of square-integrable functions from $(a,b)$ to $\mathbb{R}^{m\times p}$ is noted $\mathcal{L}_2(a,b;\mathbb{R}^{m\times p})$. Let finally $\mathcal{C}_{pw}(a,b;\mathbb{R}^{m\times p})$ be the set of piecewise continuous functions with a finite number of discontinuity points.

\section{Sufficient LMI stability conditions for time-delay systems}\label{sec1}

\subsection{System data}

Consider a LTI delay system given by
\begin{subequations}\label{eq:tds}
    \begin{empheq}{align}
        \dot{x}(t) &= A x(t) + A_d x(t-h),\; \forall t\geq 0,\label{eq:tdsa}\\
        x(t) &= \varphi(t)\in\mathcal{C}_{pw}(-h,0;\mathbb{R}^{n_x}), \; \forall t\in[-h,0],
    \end{empheq}
\end{subequations}
where the single delay $h>0$ and matrices $A,A_d$ in $\mathbb{R}^{n_x\times n_x}$ are constant and known. Without loss of generality, matrix $A_d$ is decomposed into a product $BC$ as 
\begin{equation}\label{eq:constC1}
A_d=BC,\mbox{ with } |C|=1,
\end{equation}
with $B,C^T\in\mathbb{R}^{n_x\times n_z}$ being full column rank matrices, with $n_z$ being the rank of matrix $A_d$. Along the paper, the Shimanov notation $x_t(\theta)=x(t+\theta)$, for all $(t,\theta)$ in $\mathbb R^+\times [-h,0]$ is adopted.

\begin{definition}[GES]
The trivial solution $x(t)\equiv 0$ of system~\eqref{eq:tds} is globally exponentially stable (GES), if there exist $\kappa\geq 1$ and $\mu>0$ such that the solution to  \eqref{eq:tds} generated by any initial condition $\varphi\in \mathcal{C}_{pw}(-h,0;\mathbb{R}^{n_x})$, denoted as $x(t,\varphi)$ verifies

\begin{equation}\label{def:GES}
\norm{x(t,\varphi)}\leq \kappa e^{-\mu t}\underset{\theta\in[-h,0]}{\mathrm{sup}}{\norm{\varphi(\theta)}},\quad \forall t\geq 0.
\end{equation}
\end{definition}

Several ways of assessing GES of LTI systems have been provided in the literature as mentioned in the introduction. Here, the contribution focuses on conditions arising from the application of the Lyapunov-Krasovskii theorem~\cite{Gu2003},  adapted to the LTI case.
\begin{theorem}[Lyapunov-Krasovskii theorem]\label{thm:LK}
If there exist positive scalars $\eps_1$, $\eps_2$ and $\eps_3$ and a continuous and differentiable functional $\mathcal{V}$ defined from $\mathcal{C}_{pw}(-h,0;\mathbb{R}^{n_x})$ to $\mathbb{R}$ such that, for any $\varphi$ in $\mathcal{C}_{pw}(-h,0;\mathbb{R}^{n_x})$, the following inequalities hold
\begin{itemize}
\item[(i)] $\eps_1 |\varphi(0)|^2\leq \mathcal{V}(\varphi)\leq \eps_2\underset{\theta\in[-h,0]}{\mathrm{sup}}{\norm{\varphi(\theta)}}^2$ and
\item[(ii)] $\dot{\mathcal{V}}(\varphi)\leq -\eps_3|\varphi(0)|^2$, where $\dot{\mathcal{V}}$ denotes here the derivative of $\mathcal{V}$ along the trajectories of~\eqref{eq:tds},
\end{itemize}
then, the trivial solution of system~\eqref{eq:tds} is GES.
\end{theorem}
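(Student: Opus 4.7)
The plan is to evaluate $\mathcal{V}$ along a solution of~\eqref{eq:tds} and extract the exponential decay from the two dissipation inequalities. Fix $\varphi\in\mathcal{C}_{pw}(-h,0;\mathbb{R}^{n_x})$, denote by $x(\cdot)=x(\cdot,\varphi)$ the associated solution, and set $V(t):=\mathcal{V}(x_t)$ and $\|\varphi\|_h:=\sup_{\theta\in[-h,0]}|\varphi(\theta)|$.

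First, condition~(ii) yields $\dot V\leq 0$, so $V$ is non-increasing. Combined with the two bounds of~(i),
\[
\eps_1|x(t)|^2\;\leq\;V(t)\;\leq\;V(0)\;\leq\;\eps_2\|\varphi\|_h^2,\quad t\geq 0,
\]
which already gives the a priori estimate $|x(t)|\leq\sqrt{\eps_2/\eps_1}\,\|\varphi\|_h$, i.e.\ Lyapunov stability and the candidate $\kappa$. Integrating~(ii) further provides $\eps_3\int_0^{\infty}|x(s)|^2\,ds\leq V(0)\leq\eps_2\|\varphi\|_h^2$, so that $x\in\mathcal{L}_2(\mathbb{R}^+;\mathbb{R}^{n_x})$; together with the boundedness of $\dot x$ read off from~\eqref{eq:tdsa}, Barbalat's lemma already ensures $x(t)\to 0$.

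The exponential rate can then be obtained in two equivalent ways. A self-contained route is to establish a one-step contraction $V(t+h)\leq\rho V(t)$ for some $\rho\in(0,1)$ independent of $t$ and $\varphi$. Integrating~(ii) over $[t,t+h]$ gives $V(t+h)+\eps_3\int_t^{t+h}|x(s)|^2\,ds\leq V(t)$, while the upper bound of~(i) at $t+h$ supplies some $s^{\star}\in[t,t+h]$ with $|x(s^{\star})|^2\geq V(t+h)/\eps_2$. The LTI dynamics and the a priori bound above provide a uniform Lipschitz constant $L$ for $x$ on $[t,t+h]$, so that $|x(s)|^2$ cannot drop from $|x(s^{\star})|^2$ too quickly. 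A sub-interval of width of order $|x(s^{\star})|/L$ around $s^{\star}$ therefore yields $\int_t^{t+h}|x(s)|^2\,ds\geq c\,V(t+h)$ for some explicit $c>0$, which produces $V(t+h)(1+c\eps_3)\leq V(t)$ and thus the desired $\rho=1/(1+c\eps_3)$. Iterating gives $V(nh)\leq\rho^n V(0)$, and the lower bound of~(i) converts this into~\eqref{def:GES} with $\mu=-\ln(\rho)/(2h)$ and a computable $\kappa\geq 1$. Alternatively, one simply invokes the classical fact that asymptotic and exponential stability coincide for LTI retarded systems with a single constant delay.

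The main obstacle is exactly this exponential rate: the upper bound of~(i) is in sup-norm whereas the dissipation in~(ii) only controls the pointwise quantity $|\varphi(0)|^2$, so no direct $\dot V\leq-\alpha V$ comparison is available. The entire difficulty lies in exchanging the pointwise lower bound $|x(s^{\star})|^2\geq V(t+h)/\eps_2$ for an integral lower bound; the LTI structure of~\eqref{eq:tds} enters here precisely through the uniform Lipschitz regularity of $x$ needed to perform this trade.
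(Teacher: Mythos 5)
The paper does not actually prove this statement: Theorem~\ref{thm:LK} is recalled as the classical Lyapunov--Krasovskii theorem and attributed to the literature (Gu et al.), so your proposal is being compared against the standard textbook argument rather than a proof in the text. Your skeleton --- monotonicity of $V(t)=\mathcal{V}(x_t)$, the $\mathcal{L}_2$ estimate, and a contraction over delay-length windows --- is the right one, and your fallback route (uniform asymptotic stability together with the spectral theory of LTI retarded equations implies exponential stability) is a legitimate, if citation-heavy, way to close the argument.

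The self-contained route, however, has a genuine quantitative gap at the step ``$\int_t^{t+h}|x(s)|^2\,ds\geq c\,V(t+h)$ for some explicit $c>0$''. The Lipschitz constant you invoke is the global one, $L\propto\|\varphi\|_h$, coming from the a priori bound. A sub-interval of width $|x(s^\star)|/(2L)$ on which $|x|\geq|x(s^\star)|/2$ gives $\int_t^{t+h}|x|^2\geq |x(s^\star)|^3/(8L)$, which must be weighed against $V(t+h)\leq\eps_2|x(s^\star)|^2$; the resulting constant is $c\sim|x(s^\star)|/(\eps_2 L)$, and this tends to $0$ as $t\to\infty$ because $|x(s^\star)|\to0$ while $L$ stays fixed. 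Hence $\rho=1/(1+c\eps_3)$ is not uniform in $t$ and the iteration $V(nh)\leq\rho^nV(0)$ does not follow as written. The repair is to localize the Lipschitz constant: for $t\geq h$ one has $|\dot x|\leq(|A|+|A_d|)M_t$ on $[t,t+h]$ with $M_t=\sup_{s\in[t-h,t+h]}|x(s)|\leq\sqrt{V(t-h)/\eps_1}$, by monotonicity of $V$ and the lower bound in (i). Running your computation over the double window and setting $r=V(t+h)/V(t-h)$ then yields either $1-r\geq \frac{\eps_3 h}{4\eps_2}\,r$ (when the sub-interval saturates at length $h$) or $1-r\geq C\,r^{3/2}$ with $C=\eps_3\sqrt{\eps_1}/\bigl(8(|A|+|A_d|)\eps_2^{3/2}\bigr)$; both force $r\leq\rho<1$ with $\rho$ depending only on $\eps_1,\eps_2,\eps_3,|A|,|A_d|,h$, giving the two-step contraction $V(t+h)\leq\rho\,V(t-h)$ and hence~\eqref{def:GES}. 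Alternatively, keep your second route, which sidesteps the issue entirely at the cost of invoking the classical LTI equivalence between asymptotic and exponential stability.
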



The main underlying idea of this theorem is to determine a positive definite functional $\mathcal{V}$, such that its derivative with respect to time along the trajectories of the system (\ref{eq:tds}) is negative definite. The main problem within the application of this theorem is to design a suitable functional and then to provide some conditions that guarantee its positive definiteness and the negative definiteness of its derivative.
\newline
The derivation of stability conditions using the Lyapunov-Krasovskii theorem usually involves quite elaborate developments. To give an idea of the procedure involved in this approach and to provide a glimpse of its technical flavor, we present here some basics on the procedure to follow in order to derive asymptotic stability criteria for time-delay systems expressed in terms of LMI. The procedure follows three steps.
 
\underline{Step 1.} \textit{To propose a candidate Lyapunov-Krasovskii functional $\mathcal{V}$, based on the structure of the system.}\newline
\underline{Step 2.} \textit{To compute the derivative of the functional along the trajectories of the system.}\newline
\underline{Step 3.} \textit{To apply over-approximation technique to $\dot{\mathcal{V}}$ and derive a stability condition expressed in terms of LMI.}

Among the numerous methods employed in the literature, we will focus here on the method based on the application of the Bessel-Legendre integral inequality~\cite{Seuret2014stability,Seuret2015Hierarchy}, which is scalable with respect to the degree $n$ of the Legendre polynomial to be considered. Interestingly, this method leads to LMI admitting a hierarchical structure with respect to $n$. Before stating this theorem, let us first recall the main definitions and some characteristics about these polynomials.


\subsection{Definition of Legendre polynomials}

\begin{definition}
The Legendre polynomials are defined over the normalized interval $[0,1]$ as
\begin{equation} \label{eq:lk} 
    \forall k\in\mathbb{N},\quad
    l_k(\theta) = \overset{k}{\underset{j=0}{\sum}}(\begin{smallmatrix}k\\j\end{smallmatrix})(\begin{smallmatrix}k+j\\j\end{smallmatrix})(\theta-1)^j,
\end{equation}
where $(\begin{smallmatrix}k\\j\end{smallmatrix})$ stands for the binomial coefficients.
\end{definition}

The Legendre polynomials have been  widely used in the polynomial approximation theory~\cite{boyd2001} because they form an orthogonal sequence with respect to the inner product $\int_0^1 \phi^\top(\theta)\psi(\theta)\dth$, for any $\phi,\psi$ in $\mathcal{L}_2(0,1;\mathbb{R}^{m})$. Before going into the details, for any order $n$ and $m$ in $\mathbb{N}^{\ast}$ and $\theta\in[0,1]$, let us introduce the following notations:
\begin{align}\label{eq:ln}
&\ell_n^m(\theta)\!=\!\begin{bmatrix} l_0(\theta)&l_1(\theta)&\dots l_{n-1}(\theta)\end{bmatrix}^{\!\top}\!\!\!\otimes\! I_m\in\mathbb{R}^{nm\times m},\\
\label{eq:In}
&\mathcal{I}_n^m \!=\! \mathcal{I}_n^1\otimes I_m \in\mathbb{R}^{nm\times nm},\\
&\mathcal{I}_n^1({p,q}) \!=\! \left\{\begin{array}{lcl}
2p-1& \mbox{ if }p=q,\\
0 & \mbox{ otherwise,}
\end{array}
\right.\,
\forall(p,q)\in\{1,\dots, n\}^2.\nonumber
\end{align}
To ease the reading of the paper and where no confusion is possible, we will
omit the upper script $m=n_z$ and only use notations $\ell_n$ and $\mathcal{I}_n$.

The main motivation for employing these polynomials comes also from an efficient integral inequality, known as Bessel-Legendre inequality, which is stated here.
\begin{lemma}[Bessel-Legendre inequality]\label{lem:Bessel}
Let $ z\in\mathcal{L}_2(-h,0;\mathbb{R}^{n_z})$ and $S\in \mathbb{S}^{n_z}_+$ a positive definite matrix. The integral inequality
\begin{equation}\label{lem:ineqBessel}
    \int_{-h}^0 z^\top (\theta)S z(\theta)\dth \geq   \frac{1}{h}
    \zeta_n^\top( z) (\mathcal{I}_n^1\otimes S) \zeta_n( z),
\end{equation}
hold, for all $n\in \mathbb{N}^{\ast}$, where  
$ \zeta_n( z)= \int_{-h}^0 \ell_n\left(\frac{\theta+h}{h}\right) z(\theta)\mathrm d\theta$.
\newline
Moreover, if $ z$ is a polynomial function of degree $n-1$ over $(-h,0)$, the equality case holds
\begin{equation}\label{lem:eqBessel}
    \int_{-h}^0 z^\top (\theta)S z(\theta)\dth =   \frac{1}{h}
     \zeta_n( z)^T(\mathcal{I}_n^1\otimes S)  \zeta_n( z).
\end{equation}
\end{lemma}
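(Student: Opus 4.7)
The plan is to recognize Lemma~\ref{lem:Bessel} as a matrix-weighted, rescaled version of the classical Bessel inequality on the Legendre basis of $\mathcal{L}_2(0,1;\mathbb{R}^{n_z})$. The first step is a change of variable $s=(\theta+h)/h$ mapping $[-h,0]$ to $[0,1]$, after which the left-hand side reads $h\int_0^1 \tilde z^\top(s) S \tilde z(s)\,ds$ with $\tilde z(s)=z(hs-h)$, and the $k$-th block of $\zeta_n(z)$ becomes $\zeta_k = h\int_0^1 l_k(s)\tilde z(s)\,ds$. This reduces the statement to a uniform question on $[0,1]$ and isolates the factor $1/h$ on the right-hand side.

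Next I would use the orthogonality of Legendre polynomials on $[0,1]$, namely $\int_0^1 l_k(s)l_j(s)\,ds=\delta_{kj}/(2k+1)$, a standard identity derivable from the explicit formula \eqref{eq:lk} by repeated integration by parts. Define the orthogonal projection $\tilde z_n(s)=\sum_{k=0}^{n-1}c_k\,l_k(s)$ of $\tilde z$ onto $\mathrm{span}\{l_0,\dots,l_{n-1}\}$, with Fourier coefficients $c_k=(2k+1)\zeta_k/h$. Since the residual $\tilde z-\tilde z_n$ is orthogonal to every $l_k$ in the scalar inner product and $S$ acts pointwise, the matrix-weighted Pythagorean identity
\[
h\!\int_0^1\!\tilde z^\top S\tilde z\,ds = h\!\int_0^1\!\tilde z_n^\top S\tilde z_n\,ds + h\!\int_0^1\!(\tilde z-\tilde z_n)^\top S(\tilde z-\tilde z_n)\,ds
\]
holds.

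Finally, a direct computation of the projection term, using $\int_0^1 l_k^2\,ds = 1/(2k+1)$ together with $c_k=(2k+1)\zeta_k/h$, yields $h\int_0^1 \tilde z_n^\top S \tilde z_n\,ds = \frac{1}{h}\sum_{k=0}^{n-1}(2k+1)\zeta_k^\top S\zeta_k$, which is exactly $\frac{1}{h}\zeta_n^\top(\mathcal{I}_n^1\otimes S)\zeta_n$ by the definition of $\mathcal{I}_n^1$ in \eqref{eq:In}. Since $S\succ 0$, the residual integral is non-negative, proving \eqref{lem:ineqBessel}. For the equality case, if $z$ is polynomial of degree at most $n-1$ on $(-h,0)$, then $\tilde z$ is polynomial of the same degree on $(0,1)$, hence $\tilde z=\tilde z_n$ and the residual vanishes, giving \eqref{lem:eqBessel}. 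The analytic content is elementary — essentially vector-valued Bessel's inequality — and the only real care needed is bookkeeping: tracking $h$-factors through the rescaling, matching the Kronecker indexing $p=k+1$ so that the weight $2p-1$ becomes $2k+1$, and observing that $S\succ 0$ lets the weight matrix commute cleanly through the scalar projection argument. No substantive obstacle is anticipated beyond these index-matching chores.
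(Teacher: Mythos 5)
Your proposal is correct and follows essentially the same route as the paper's proof in Appendix~C: the paper defines the residual $\tilde z_n(\theta)=z(\theta)-\frac{1}{h}\ell_n^\top\left(\frac{\theta+h}{h}\right)\mathcal{I}_n\zeta_n(z)$, which is exactly your orthogonal projection error with coefficients $c_k=(2k+1)\zeta_k/h$, and then expands $\int_{-h}^0\tilde z_n^\top S\tilde z_n\,\dth\geq 0$ using Legendre orthogonality, with equality when $z$ has degree $n-1$. The only cosmetic difference is your preliminary rescaling to $[0,1]$, which the paper skips by working directly on $[-h,0]$.
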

\begin{proof} 
The proof is derived from \cite[Lemma~3]{Seuret2015Hierarchy} and is postponed to Appendix~\ref{app0}.
\end{proof}

\subsection{Sufficient LMI conditions}

We are now in position to state a stability theorem for system \eqref{eq:tds} based on the previous developments.
\begin{theorem}\label{thm:CSlmi}
If there exist an order $n$ in $\mathbb{N}^{\ast}$ and matrices $(\Pn,R,S)$ in $\mathbb{S}^{n_x+nn_z}\times\mathbb{S}^{n_z}_+\times\mathbb{S}^{n_z}_+$ such that the following LMI conditions hold
\begin{subequations}\label{eq:CSlmi}
    \begin{align}
    \Phi_n^+ = \Pn + \begin{bmatrix}0 & 0\\ 
    0 & \frac{1}{h} (\mathcal{I}_n^1\otimes S)\end{bmatrix}&\succ 0
    ,\label{eq:CSlmi1}\\
    \Phi_n^- = \begin{bmatrix}
       \mathcal{H}(\Pn\mathbf{A}_n) + \Psi(R,S) & \Pn \mathbf{B}_n \\
        \ast & -S
    \end{bmatrix}&\prec 0,\label{eq:CSlmi2}
    \end{align}
\end{subequations}
where 
\begin{equation*}
\begin{aligned}
    \Psi(R,S) &= \begin{bmatrix}C^\top(hR+S)C & 0\\ 0 & -\frac{1}{h}(\mathcal{I}_n^1\otimes R) \end{bmatrix},\\
    \mathbf{A}_n &= \begin{bmatrix} A & 0\\ \ell_n(1)C & -{\frac{1}{h}}\mathcal{L}_n^{n_z} \end{bmatrix},\;   \mathbf{B}_n = \ \begin{bmatrix} B \\ - \ell_n(0) \end{bmatrix},\\
    \ell_n(1) &= \begin{bsmallmatrix}I_{n_z}\\\vdots\\I_{n_z} \end{bsmallmatrix}, \; 
    \ell_n(0) = \begin{bsmallmatrix}I_{n_z}\\\vdots\\(-1)^{n-1}I_{n_z} \end{bsmallmatrix}\in\mathbb{R}^{nn_z},\\
    \mathcal{L}_n^{n_z} &= \mathcal{L}_n^1 \otimes I_{n_z}\in\mathbb{R}^{nn_z\times nn_z},\\
    \mathcal{L}_n^1(p,q) &= 
    \left\{
    \begin{array}{ll}
(2q-1)(1-(-1)^{p+q}) & \text{ if }\  p\geq q,\\
        0 & \text{otherwise},
    \end{array}
    \right.\\
    &\qquad\qquad\qquad\qquad\qquad\forall (p,q)\in\{1,\dots,n\}^2,
\end{aligned}
\end{equation*}
then, the trivial solution of system~\eqref{eq:tds} is GES.
\end{theorem}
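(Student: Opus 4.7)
The strategy follows the three-step Lyapunov--Krasovskii procedure outlined before the theorem. Letting $z(s)=Cx(s)$, $\zeta_n(t)=\int_{-h}^0 \ell_n\!\left(\tfrac{\theta+h}{h}\right) z(t+\theta)\,\dth$ and $\xi_n(t)=[x^\top(t),\,\zeta_n^\top(t)]^\top \in \mathbb{R}^{n_x+nn_z}$, the candidate functional I would take is
\[
\mathcal{V}(x_t) = \xi_n^\top(t)\,\Pn\,\xi_n(t) + \int_{-h}^0 z^\top(t{+}\theta)\,S\,z(t{+}\theta)\,\dth + \int_{-h}^0 (h{+}\theta)\,z^\top(t{+}\theta)\,R\,z(t{+}\theta)\,\dth,
\]
chosen so that \eqref{eq:CSlmi1} governs the lower bound of $\mathcal{V}$ through the Bessel--Legendre inequality, while \eqref{eq:CSlmi2} is the Schur-type aggregation of $\dot{\mathcal{V}}$.

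For condition~(i) of Theorem~\ref{thm:LK}, I would discard the nonnegative $R$-integral and apply \eqref{lem:ineqBessel} with matrix $S$ to the $S$-integral. This yields
\[
\mathcal{V}(x_t) \geq \xi_n^\top(t)\,\Phi_n^+\,\xi_n(t) \geq \sigmaD{\Phi_n^+}\,|x(t)|^2,
\]
so that \eqref{eq:CSlmi1} ensures $\eps_1=\sigmaD{\Phi_n^+}>0$. The upper bound $\mathcal{V}(x_t)\leq \eps_2 \sup_{\theta\in[-h,0]}|\varphi(\theta)|^2$ follows from Cauchy--Schwarz, the finite $\mathcal{L}_2$-norm of $\ell_n$ on $[0,1]$, and $|C|=1$.

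For condition~(ii), the crucial observation is that, along trajectories of \eqref{eq:tds}, the extended state satisfies the finite-dimensional ODE
\[
\dot{\xi}_n(t) = \mathbf{A}_n\,\xi_n(t) + \mathbf{B}_n\,z(t-h).
\]
The upper block reproduces \eqref{eq:tdsa} thanks to $A_d=BC$; the lower block follows by differentiating $\zeta_n(t)$, where the interior term is handled by the Legendre differentiation identity $\ell_n'(\tau)=\mathcal{L}_n^{n_z}\ell_n(\tau)$ to produce $-\tfrac{1}{h}\mathcal{L}_n^{n_z}\zeta_n(t)$, together with the boundary contributions $\ell_n(1)z(t)$ and $-\ell_n(0)z(t-h)$. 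Substituting this dynamics into $\tfrac{d}{dt}[\xi_n^\top\Pn\xi_n]$ and adding the derivatives of the two integrals---namely $z^\top(t)(hR{+}S)z(t) - z^\top(t{-}h)Sz(t{-}h) - \int_{-h}^0 z^\top R z\,\dth$---then upper-bounding the remaining $R$-integral by $-\tfrac{1}{h}\zeta_n^\top(\mathcal{I}_n^1\otimes R)\zeta_n$ via Lemma~\ref{lem:Bessel}, one obtains, with $\eta(t)=[\xi_n^\top(t),\ z^\top(t-h)]^\top$,
\[
\dot{\mathcal{V}}(x_t) \leq \eta^\top(t)\,\Phi_n^-\,\eta(t) \leq -\sigmaD{-\Phi_n^-}\,|x(t)|^2,
\]
which delivers $\eps_3>0$ in view of \eqref{eq:CSlmi2}. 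The only non-routine step is recognising that $\zeta_n$ closes into a finite-dimensional ODE through the Legendre differentiation identity; every subsequent manipulation is a quadratic-form aggregation that reproduces $\Phi_n^-$ exactly.
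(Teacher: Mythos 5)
Your proposal is correct and follows essentially the same route as the paper's own proof: the same candidate functional, two applications of the Bessel--Legendre inequality (one for the lower bound giving $\Phi_n^+$, one for the $R$-integral giving $\Phi_n^-$), and the same closed extended dynamics $\dot{\xi}_n=\mathbf{A}_n\xi_n+\mathbf{B}_n z(t-h)$ obtained by integration by parts together with the Legendre differentiation relation. The paper phrases the lower-block computation as an integration by parts rather than citing the identity $\ell_n'=\mathcal{L}_n^{n_z}\ell_n$ explicitly, but the content is identical.
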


\begin{proof}
For a given integer $n$ in $\mathbb{N}^\ast$, consider the functional defined as follows
\begin{align}\label{thm1:defV}
    \mathcal{V}_n(x_t)=&\begin{bmatrix} x_t(0)\\  \zeta_n( Cx_t)\end{bmatrix}^\top \Pn \begin{bmatrix} x_t(0)\\  \zeta_n( Cx_t)\end{bmatrix} \\
&+\displaystyle \int_{-h}^0x^\top_t(\theta) C^\top((\theta+h)R+S)Cx_t(\theta)\dth,\nonumber
\end{align}
where matrices $\Pn$, $R$ and $S$ are solution to the LMI stated in the theorem and where the augmented vector $ \zeta_n$ is defined as follows
\begin{equation}
 \zeta_n( Cx_t)= \int_{-h}^0 \ell_n(\theta) C x_t(\theta)\dth.
\end{equation}
Applying the Bessel-Legendre inequality~\eqref{lem:ineqBessel} yields 
\begin{equation*}
    \mathcal{V}_n(x_t)\geq \begin{bmatrix} x_t(0)\\  \zeta_n( Cx_t)\end{bmatrix}^\top \Phi_n ^+\begin{bmatrix} x_t(0)\\  \zeta_n( Cx_t)\end{bmatrix}.
\end{equation*}
Therefore, if condition \eqref{eq:CSlmi1} is verified, then there exists a sufficiently small $\eps_1>0$ such that $\mathcal V_n(x_t)\geq \eps_1 \norm{x_t(0)}^2$.
In addition, since $\mathcal{V}$ is quadratic with respect to $x_t$, selecting $\eps_2=\sigmaU{\Pn}+\frac{h^2}{2}\sigmaU{R}+h\sigmaU{S}$ ensures that inequality $\mathcal{V}_n(x_t)\leq\eps_2\underset{\theta\in[-h,0]}{\mathrm{sup}}{\norm{x_t(\theta)}}^2$ holds.
As in \cite{Seuret2014stability}, computing the derivative of the functional along the trajectories of the system yields
\begin{equation}\label{eq:dotV}
\begin{array}{lcl}
\dot{\mathcal{V}}_n(x_t)\!=\!2\!\begin{bmatrix} x_t(0)\\ \! \zeta_n( Cx_t)\!\end{bmatrix}^{\!\!\top}  \!\!\! \Pn \! \begin{bmatrix} \dot x_t(0)\\ \!\dot{\zeta}_n( Cx_t)\!\end{bmatrix}\!\! - \!x^\top_t\! (-h) C^\top\!\! S Cx_t(-h)\\
\quad \displaystyle+x^\top_t\!(0) C^\top\!(hR\!+\!S)Cx_t(0)
\!-\! \int_{-h}^0\!\!x^\top_t\!(\theta) C^\top\! RCx_t(\theta)\dth .
\end{array}
\end{equation}
Then, the dynamics~\eqref{eq:tdsa} and an integration by parts of $\dot{\zeta}_n( Cx_t)$ provide an expression $\begin{bsmallmatrix} \dot{x}_t(0)\\ \dot{\zeta}_n( Cx_t)\end{bsmallmatrix}$ with respect to $x_t(0)$, $ \zeta_n( Cx_t)$ and $x_t(-h)$. It ensures that
$$
\begin{bmatrix} 
\dot{x}_t(0)\\ 
\dot{\zeta}_n( Cx_t)
\end{bmatrix}=  \mathbf{A}_n \begin{bmatrix} 
x_t(0)\\ 
 \zeta_n( Cx_t)
\end{bmatrix}+ \mathbf{B}_n C x_t(-h).
$$ 
Re-injecting this expression into \eqref{eq:dotV} and applying again the Bessel-Legendre inequality~\eqref{lem:ineqBessel} yields
$$
\dot {\mathcal V}_n(x_t)\leq  
\begin{bsmallmatrix} 
x_t(0)\\ 
 \zeta_n( Cx_t)\\
C x_t(-h)
\end{bsmallmatrix}^\top \Phi_n^- \begin{bsmallmatrix} 
x_t(0)\\ 
 \zeta_n( Cx_t)\\
Cx_t(-h)
\end{bsmallmatrix}.
$$
Therefore, if condition~\eqref{eq:CSlmi2} is satisfied, then there exists a sufficiently small $\eps_3$ such that $\dot {\mathcal V}_n(x_t)\leq -\eps_3 |x_t(0)|^2$, which concludes the proof by application of the Lyapunov-Krasovskii theorem.
\end{proof}

Note that the proposed theorem has already been presented in \cite{Seuret2014stability} or in \cite{Seuret2015Hierarchy}. As in \cite{Seuret2014stability} or in \cite{Seuret2015Hierarchy}, the previous stability condition is hierarchical with respect to $n$. This aspect is presented formally in the following lemma.

\begin{lemma}\label{lem:hierarchy}
If there exists an integer $N$ in $\mathbb{N}^\ast$, for which a solution to LMI~\eqref{eq:CSlmi} exists, then there also exists a solution to the same problem for any integer $n\geq N$.
\end{lemma}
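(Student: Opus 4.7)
The plan is to proceed by induction on $n$, with base case $n=N$ given, and inductive step showing that a solution at order $n$ yields a solution at order $n+1$. The key move is to build $\mathbf{P}_{n+1}$ from $\mathbf{P}_{n}$ by padding with a block of zeros corresponding to the newly introduced $(n{+}1)$-th Legendre coefficient, while keeping the same $R$ and $S$. Concretely, I would set
\begin{equation*}
\mathbf{P}_{n+1}=\begin{bmatrix}\mathbf{P}_{n} & 0\\ 0 & 0\end{bmatrix},
\end{equation*}
where the ordering is chosen so that the first $n_x+nn_z$ rows/columns correspond to the same state/coefficient block as at order $n$, and the last $n_z$ rows/columns correspond to the freshly added coefficient.

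Next I would verify \eqref{eq:CSlmi1} and \eqref{eq:CSlmi2} directly. The exploitation of two structural facts makes the algebra light. First, because $\mathcal{I}_{n+1}^1\otimes S=\diag(\mathcal{I}_{n}^1\otimes S,\;(2n+1)S)$, the addition yielding $\Phi_{n+1}^+$ is block diagonal, so
\begin{equation*}
\Phi_{n+1}^+=\begin{bmatrix}\Phi_{n}^+ & 0\\ 0 & \tfrac{2n+1}{h}S\end{bmatrix}\succ 0,
\end{equation*}
using $\Phi_n^+\succ 0$ and $S\succ 0$. Second, from the definitions of $\mathcal{L}_{n+1}^{n_z}$, $\ell_{n+1}(1)$, $\ell_{n+1}(0)$ one checks that the last block column of $\mathbf{A}_{n+1}$ is zero (the diagonal of $\mathcal{L}_{n+1}^1$ vanishes and the block above it is zero by construction), and that $\mathbf{B}_{n+1}=\begin{bsmallmatrix}\mathbf{B}_{n}\\ (-1)^{n+1}I_{n_z}\end{bsmallmatrix}$. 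Combined with the zero bottom block of $\mathbf{P}_{n+1}$, these give
\begin{equation*}
\mathbf{P}_{n+1}\mathbf{A}_{n+1}=\begin{bmatrix}\mathbf{P}_{n}\mathbf{A}_{n} & 0\\ 0 & 0\end{bmatrix},\qquad \mathbf{P}_{n+1}\mathbf{B}_{n+1}=\begin{bmatrix}\mathbf{P}_{n}\mathbf{B}_{n}\\ 0\end{bmatrix}.
\end{equation*}

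Plugging these into \eqref{eq:CSlmi2} at order $n+1$ and using $\Psi_{n+1}(R,S)=\diag(\Psi_n(R,S),-\tfrac{1}{h}(2n+1)R)$ decomposes $\Phi_{n+1}^-$, after a block permutation swapping the new coefficient row/column with the $Cx_t(-h)$ row/column, into
\begin{equation*}
\begin{bmatrix}\Phi_{n}^- & 0\\ 0 & -\tfrac{2n+1}{h}R\end{bmatrix}\prec 0,
\end{equation*}
which is negative definite by the induction hypothesis and $R\succ 0$. Iterating concludes the argument.

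The construction itself is essentially forced; the main obstacle is bookkeeping, namely arranging the block ordering of $\mathbf{P}_{n+1}$, $\mathbf{A}_{n+1}$, $\mathbf{B}_{n+1}$ and $\Psi_{n+1}$ consistently so that the zero padding actually decouples the new row/column. The key supporting observation —that the last block column of $\mathcal{L}_{n+1}^{n_z}$ and hence of $\mathbf{A}_{n+1}$ vanishes— follows from the formula defining $\mathcal{L}_n^1$, and should be singled out as a small preliminary remark before carrying out the block computations above.
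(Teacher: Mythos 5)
Your proposal is correct and follows exactly the route the paper sketches: padding $\mathbf{P}_{n+1}=\begin{bsmallmatrix}\Pn&0\\0&0\end{bsmallmatrix}$ while keeping $R,S$, and exploiting the nested block structure of $\mathcal{I}_{n+1}^1$, $\mathcal{L}_{n+1}^1$ (whose last column indeed vanishes) and $\ell_{n+1}(0)$ to reduce $\Phi_{n+1}^{\pm}$ to block-diagonal forms involving $\Phi_n^{\pm}$. The paper omits these block computations, so your write-up actually supplies the details it leaves out.
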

\begin{proof}
The proof uses similar arguments to the ones provided in~\cite[Theorem~7]{Seuret2015Hierarchy}. A glimpse of the proof consists in introducing $\mathbf{P}_{n+1}=\begin{bsmallmatrix}\Pn&0\\0&0\end{bsmallmatrix}$, so that $\mathcal{V}_{n+1}(x_t)=\mathcal V_{n}(x_t)$ and so that one can exhibit a solution to the LMI problem at order $n+1$ based on the solution at order $n$. The details of the proof are omitted but strongly relies on the structure of $\Phi_n^+$ and $\Phi_n^-$.
\end{proof}

To sum up the results presented so far, Theorem~\ref{thm:CSlmi} presents scalable LMI conditions for the stability (GES) of LTI delay systems. These conditions is parameterized by $n$ in $\mathbb{N}^{\ast}$, corresponding to the degree of the Legendre polynomials considered in the construction of the candidate Lyapunov-Krasovskii functional. Moreover, it is demonstrated that when $n$ increases, the conservatism can only be reduced. It is however legitimate to wonder if a converse result can be proven. This direction refers to the possibility of assessing the satisfaction of these LMI for a sufficiently large $n$, when the system under consideration is known a priori to be GES for a given delay $h$.  Apart the convergence, i.e. the existence of an order $N^\ast$ for which these LMI conditions are guaranteed, an interesting underlying question is to estimate analytically such an order. The next developments aim at providing a solution to both problems. More specifically, the next section presents a converse theorem with a proof of existence of $N^\ast$, while the latter address the problem of estimating order $N^\ast$.

\section{Necessity of LMI stability conditions: Existence of an order}\label{sec2}

In this section, the objective is to prove the converse side of Theorem~\ref{thm:CSlmi}. In other words, we aim at proving that, if the trivial solution of system~\eqref{eq:tds} is~GES, then there exists an order $N^\ast$ in $\mathbb{N}^\ast$ from which LMI conditions in~\eqref{eq:CSlmi} hold.
The developments go against the grain of usual approaches and are inspired by the converse Lyapunov-Krasovskii theorem~\cite{Wenzhang1989,Zhabko2015}, stated below.

\begin{theorem}[Converse Lyapunov-Krasovskii theorem]\label{thm:converse}
If the trivial solution of system \eqref{eq:tds} is GES, then there exist positive scalars $\eps_1$, $\eps_2$ and $\eps_3$ and a continuous and differentiable functional $\mathcal{V}$ defined from $\mathcal{C}_{pw}(-h,0;\mathbb{R}^{n_x})$ to $\mathbb{R}$ such that, for any $\varphi$ in $\mathcal{C}_{pw}(-h,0;\mathbb{R}^{n_x})$, inequalities
\begin{itemize}
\item[(i)] $\eps_1 |\varphi(0)|^2\leq \mathcal{V}(\varphi)\leq \eps_2\underset{\theta\in[-h,0]}{\mathrm{sup}}{\norm{\varphi(\theta)}}^2$ hold and
\item[(ii)] $\dot{ \mathcal{V}}(\varphi)\leq -\eps_3|\varphi(0)|^2$ holds, where $\dot{\mathcal{V}}$ denotes here the derivative of $\mathcal{V}$ along the trajectories of~\eqref{eq:tds},
\end{itemize}
\end{theorem}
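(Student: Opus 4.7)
The plan is to construct a \emph{complete} Lyapunov--Krasovskii functional, following the classical route of Kharitonov and Zhabko~\cite{Kharitonov2013}. First, I would introduce the delay Lyapunov matrix $U:[-h,h]\to\mathbb{R}^{n_x\times n_x}$ defined by
$$U(\tau) = \int_0^{\infty} K^\top(t)\,W\,K(t+\tau)\,\mathrm{d}t,$$
where $K(\cdot)$ is the fundamental matrix solution of \eqref{eq:tdsa} and $W\in\mathbb{S}^{n_x}_+$ is a freely chosen positive definite matrix. GES guarantees that $K$ decays exponentially, so the integral is well defined and $U$ is continuous and bounded; moreover, a short computation shows that $U$ satisfies a symmetry relation together with a delay-algebraic equation that characterize it uniquely.

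Next, I would consider the functional
$$\mathcal{V}(\varphi) = v_0(\varphi) + \int_{-h}^0 \varphi^\top(\theta)\bigl(W_1 + (h+\theta)W_2\bigr)\varphi(\theta)\,\dth,$$
where $W_1,W_2\in\mathbb{S}^{n_x}_+$ are auxiliary matrices and
$$v_0(\varphi) = \varphi^\top(0) U(0)\varphi(0) + 2\varphi^\top(0)\int_{-h}^0 U(-h-\theta) A_d\varphi(\theta)\,\dth + \int_{-h}^0\!\int_{-h}^0 \varphi^\top(\theta_1) A_d^\top U(\theta_1-\theta_2) A_d \varphi(\theta_2)\,\mathrm{d}\theta_1\,\mathrm{d}\theta_2,$$
with $W$ chosen as $W = W_0 + W_1 + hW_2$ for some additional $W_0\in\mathbb{S}^{n_x}_+$. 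A direct calculation exploiting the defining equations of $U$ yields the derivative identity
$$\dot{\mathcal{V}}(x_t) = -x^\top(t) W_0 x(t) - x^\top(t-h) W_1 x(t-h) - \int_{-h}^0 x^\top(t+\theta) W_2 x(t+\theta)\,\dth,$$
from which condition $(ii)$ is immediate with $\eps_3 = \sigmaD{W_0}$. The upper bound in $(i)$ follows by Cauchy--Schwarz combined with the uniform boundedness of $U$, $W_1$ and $W_2$.

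The main obstacle is the coercive lower bound $\mathcal{V}(\varphi) \geq \eps_1|\varphi(0)|^2$, since $v_0$ alone is not sign-definite in general. My plan is to integrate the derivative identity along the GES trajectory emanating from $\varphi$, which yields the representation
$$\mathcal{V}(\varphi) = \int_0^{\infty}\Bigl[x^\top(s) W_0 x(s) + x^\top(s-h) W_1 x(s-h) + \int_{-h}^0 x^\top(s+\theta) W_2 x(s+\theta)\,\dth\Bigr]\mathrm{d}s.$$
This expression is manifestly non-negative, and the problem reduces to the auxiliary estimate $\int_0^{\infty} |x(s,\varphi)|^2\,\mathrm{d}s \geq c\,|\varphi(0)|^2$ for some $c>0$ independent of $\varphi$. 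This would be established by a short-time continuity argument: on a sufficiently small interval $[0,s_0]$ the trajectory stays close to $\varphi(0)$, so that $\int_0^{s_0}|x(s)|^2\,\mathrm{d}s$ is bounded below by a multiple of $|\varphi(0)|^2$, with $s_0$ chosen uniformly in $\varphi$. This coercivity step is the delicate part of the argument and is precisely where the GES hypothesis enters in an essential way.
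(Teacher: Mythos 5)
Your construction (delay Lyapunov matrix $U$ built from the fundamental solution, the complete functional, the derivative identity, and the upper bound in (i) via Cauchy--Schwarz and boundedness of $U$) is the classical route and matches the framework on which the paper relies; note that the paper does not prove Theorem~\ref{thm:converse} itself but cites it, the closest in-paper analogue of the delicate step being Lemma~\ref{lem:CN2n} and its proof in Appendix~\ref{app3}. Condition (ii) and the upper bound in (i) are fine. The gap is in the coercivity step. The auxiliary estimate $\int_0^{\infty}\norm{x(s,\varphi)}^2\,\mathrm{d}s \geq c\,\norm{\varphi(0)}^2$ with $c$ independent of $\varphi$ is \emph{false}, and the proposed short-time argument fails for the same underlying reason: $\dot{x}(0^+)=A\varphi(0)+A_d\varphi(-h)$ depends on $\varphi(-h)$, which is not controlled by $\norm{\varphi(0)}$, so there is no $s_0$ uniform in $\varphi$ on which the trajectory stays near $\varphi(0)$. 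Concretely, in the scalar case take $\varphi(0)=1$, $\varphi\equiv -K$ on a subinterval of length $O(1/K)$ adjacent to $-h$, and $\varphi=0$ elsewhere on $(-h,0)$: the solution is steered exactly to zero in time $O(1/K)$ and remains of size $O(1/K)$ afterwards, so $\int_0^\infty \norm{x}^2\,\mathrm{d}s=O(1/K)\to 0$ while $\norm{\varphi(0)}=1$. (This does not contradict the theorem: for such $\varphi$ the discarded term $\int_0^{\infty}x^\top(s-h)W_1x(s-h)\,\mathrm{d}s\geq \sigmaD{W_1}\int_{-h}^0\norm{\varphi(\theta)}^2\dth$ blows up, but your reduction throws it away.)

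The standard repair --- the one used in Appendix~\ref{app3} of the paper --- is to fold the desired lower bound into the functional \emph{before} integrating along trajectories: set $\mathcal{W}(\varphi)=\mathcal{V}(\varphi)-\eps\, h\,\norm{\varphi(0)}^2$ (the paper additionally subtracts $\eps\int_{-h}^0\norm{C\varphi(\theta)}^2\dth$). Since $\frac{\mathrm{d}}{\mathrm{d}t}\norm{x(t)}^2=x^\top(t)\mathcal{H}(A)x(t)+2x^\top(t)A_dx(t-h)$ is a bounded quadratic form in $(x(t),x(t-h))$, it is dominated for $\eps$ small enough by the $-x^\top(t)W_0x(t)-x^\top(t-h)W_1x(t-h)$ part of your derivative identity, so $\dot{\mathcal{W}}(x_t)\leq 0$. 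GES and the upper bound give $\mathcal{W}(x_t)\to 0$, hence $\mathcal{W}(\varphi)=-\int_0^{\infty}\dot{\mathcal{W}}(x_t)\,\mathrm{d}t\geq 0$, which is exactly the missing coercive bound. With this replacement of your last step, the proof goes through.
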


First, the so-called complete Lyapunov-Krasovskii functional is introduced. It fulfills the converse Lyapunov-Krasovskii theorem statement and serves as a target functional. Then, a link is established between such a complete functional and the functional used in the previous section with a particular structure. The two LMI conditions in~\eqref{eq:CSlmi} are consequently obtained. On one side, assuming that system~\eqref{eq:tds} is~GES, one ensures that~\eqref{eq:CSlmi1} holds, for any order $n$. On the other side, the time-derivative of the functional along the trajectories of system~\eqref{eq:tds} becomes negative as the order increases so that~\eqref{eq:CSlmi2} holds, for sufficiently large orders. As a final step, the new converse theorem is stated and proven.

\underline{Step 1.} \textit{To introduce the complete Lyapunov-Krasovskii functional which can be expressed analytically with respect to a Lyapunov matrix function.}\newline
\underline{Step 2.} \textit{To make the link between the functional $\mathcal{V}_n$ introduced in~\eqref{thm1:defV} and the Legendre approximation of the complete functional.}\newline
\underline{Step 3.} \textit{To assess that the condition~\eqref{eq:CSlmi1} holds, assuming that system~\eqref{eq:tds} is GES.}\newline
\underline{Step 4.} \textit{To assess that the condition~\eqref{eq:CSlmi2} holds, for a sufficiently large order.}\newline
\underline{Step 5.} \textit{To conclude on the necessity of LMI conditions given in Theorem~\ref{thm:CSlmi}.}
\newline
The section is decomposed in five subsections respectively based on the steps listed above.

\subsection{The complete Lyapunov-Krasovskii functional}

In this subsection, based on~\cite{Kharitonov2013}, we introduce the so-called complete Lyapunov-Krasovskii functional, which will be used in conjonction with Theorem~\ref{thm:converse}.

\begin{definition}\label{defn:U}
For any  $(W_1,W_2,W_3)$ in $\mathbb{S}^{n_x}_+\times \mathbb{S}^{n_z}_+\times \mathbb{S}^{n_z}_+$, the complete Lyapunov-Krasovskii functional $\mathcal{V}$ is 
\vspace{-0.2cm}
\begin{align}\label{eq:V0}
    &\mathcal{V}(x_t) \!=\! x_t^\top(0)U(0)x_t(0) \!+\!\! \displaystyle2x_t^\top(0) \!\!\int_{-h}^0 \!\!\!\!U(\theta\!+\!h)BCx_t(\theta)\dth\nonumber \\
    \!\!&\quad\displaystyle+ \int_{-h}^0\! \int_{-h}^0\!\! x_t^\top(\theta_1)(BC)^\top\! U(\theta_2-\theta_1)BCx_t(\theta_2)\dth_1\dth_2\nonumber \\
    &\quad \displaystyle+ \int_{-h}^0x^\top _t(\theta) C^\top((\theta+h)W_2+W_3)Cx_t(\theta)\dth ,
\end{align}
where $U$ is the Lyapunov matrix defined from $[-h,h]$ to $\mathbb{R}^{n_x\times n_x}$ as the solution of the following matrix delayed differential equation
\begin{equation}
\left\{
\begin{aligned}
    &U'(\theta) = - U(\theta) A - U(\theta+h)A_d,\quad \forall \theta \leq 0,\\
    & A^\top U(0) \!+\! A_d^\top U(h) \!+\! U(0) A \!+\! U(-h) A_d = -W,\\
    &\;U(\theta) = U^\top(-\theta),\qquad\qquad \forall \theta>0,
\end{aligned}
\right.
\end{equation}
where 
\begin{equation}\label{def:W}
   W = W_1+C^\top \big( hW_2 + W_3 \big) C.
\end{equation}
\end{definition}
\vspace{-0.5cm}
Interestingly, using $\mathrm{vec}(M_1M_2M_3)\!=\!(M_3^\top\!\otimes\! M_2)\mathrm{vec}(M_2)$, for any matrices $M_1,M_2,M_3$ in $\mathbb{R}^{n_x\times n_x}$, a technique developed in~\cite[Section 2.10]{Kharitonov2013} provides an analytic expression of $U$ given by
\vspace{-0.1cm}
\begin{equation}\label{eq:U0}
U(\theta) =
    \left\{\begin{array}{lcl}\!
     \mathrm{vec}^{-1}\left(\begin{bsmallmatrix}I_{n_x^2}&0\end{bsmallmatrix}e^{\theta\mathcal{M}}\mathcal{N}^{-1}\begin{bsmallmatrix}0\\-\mathrm{vec}(W)\end{bsmallmatrix}\right) & \text{if} & \theta \leq 0,\\
     U^\top(-\theta) & \text{if} & \theta > 0,
    \end{array}\right.
\end{equation}
\vspace{-0.1cm}
where $W$ is given in \eqref{def:W} and
\begin{align}\label{eq:MN0}
    \mathcal{M} &= 
    \begin{bsmallmatrix}
        -A^\top \otimes I_{n_x} & -A_d^\top \otimes I_{n_x}\\
        I_{n_x} \otimes A_d^\top & I_{n_x} \otimes A^\top
    \end{bsmallmatrix},\\
    \mathcal{N} &= 
    \begin{bsmallmatrix}
        I_{n_x^2} & 0\\
        \!A^\top\otimes I_{n_x} + I_{n_x}\otimes A^\top & A_d^\top \otimes I_{n_x}\!
    \end{bsmallmatrix} \!+\!
    \begin{bsmallmatrix}
        0 & -I_{n_x^2}\!\\
        \!I_{n_x} \otimes A_d^\top& 0
    \end{bsmallmatrix} e^{-h\mathcal{M}}.\nonumber
\end{align}
It is well-known that the complete functional $\mathcal{V}$ satisfies the following lemma.
\begin{lemma}\label{lem:dV}
For any $(W_1,W_2,W_3)$ in $\mathbb{S}^{n_x}_+\times \mathbb{S}^{n_z}_+\times \mathbb{S}^{n_z}_+$, the time-derivative along the trajectories of system~\eqref{eq:tds} of the complete Lyapunov-Krasovskii functional $\mathcal{V}$ given by~\eqref{eq:V0} yields
\begin{equation}\label{eq:dV}
\dot{\mathcal{V}}(x_t) \!=\! -\frac{1}{h}\!\! \int_{-h}^0\! \begin{bsmallmatrix}x_t(0)\\hCx_t(\theta)\\Cx_t(-h) \end{bsmallmatrix}^{\!\top}\!\! \begin{bsmallmatrix}W_1&0&0\\\ast&\frac{1}{h}W_2&0\\\ast&\ast&W_3\end{bsmallmatrix}\! \begin{bsmallmatrix}x_t(0)\\hCx_t(\theta)\\Cx_t(-h)\end{bsmallmatrix}.
\end{equation}
\end{lemma}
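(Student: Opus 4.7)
The plan is to differentiate each of the four quadratic contributions appearing in~\eqref{eq:V0} separately along the trajectories of system~\eqref{eq:tds}, and to show that a combination of integration by parts, the Leibniz integral rule, the defining delay matrix equation of~$U$ and the associated boundary condition collapses everything to the single quadratic form stated in~\eqref{eq:dV}. This is the classical template of complete-type Lyapunov--Krasovskii computations, see e.g.~\cite[Chap.~2]{Kharitonov2013}; the only point to be tracked carefully here is the factorisation $A_d=BC$ used throughout.

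\textbf{Step-by-step differentiation.} Direct differentiation of $x_t^\top(0)U(0)x_t(0)$ along~\eqref{eq:tdsa} yields $2[Ax_t(0)+A_d x_t(-h)]^\top U(0) x_t(0)$. For the single cross-integral I would apply the product rule and then integrate by parts in~$\theta$ to trade $\dot x_t(\theta)$ for $-U'(\theta+h)$; the boundary contributions at $\theta=0,-h$ generate the cross-terms $x_t^\top(0)U(h)BC x_t(0)$ and $x_t^\top(0)U(0)BC x_t(-h)$. For the double integral, the change of variables $s_i=t+\theta_i$ together with a two-dimensional Leibniz rule leaves four corner terms; the symmetry $U(\theta)=U^\top(-\theta)$ halves this count and pairs them with the previously obtained cross-terms. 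Finally, Leibniz applied to the $(W_2,W_3)$-weighted integral produces $x_t^\top(0)C^\top(hW_2+W_3)C x_t(0)-x_t^\top(-h)C^\top W_3 C x_t(-h)-\int_{-h}^0 x_t^\top(\theta)C^\top W_2 C x_t(\theta)\dth$.

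\textbf{Cancellations.} After regrouping, every surviving interior integral can be written in the form $\int_{-h}^0 x_t^\top(\star)\bigl[U'(\theta)+U(\theta)A+U(\theta+h)A_d\bigr]BC x_t(\theta)\dth$ with $\star\in\{0,-h\}$ (the case $\star=-h$ obtained from the symmetry $U(\theta)=U^\top(-\theta)$), and each such integral vanishes identically by the defining matrix equation of~$U$. What remains is purely quadratic in $x_t(0)$ and $x_t(-h)$, together with the $W_2$-weighted interior integral from the fourth contribution. Using $U(0)=U^\top(0)$ and $U(h)=U^\top(-h)$, the coefficient of $|x_t(0)|^2$ collapses via the boundary condition to $\mathcal{H}(U(0)A)+\mathcal{H}(U(-h)A_d)=-W=-W_1-C^\top(hW_2+W_3)C$, in which the $C^\top(hW_2+W_3)C$ piece exactly cancels the boundary contribution produced in the previous step by the $(W_2,W_3)$-integral. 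Spreading the remaining terms $-x_t^\top(0)W_1 x_t(0)$ and $-x_t^\top(-h)C^\top W_3 C x_t(-h)$ uniformly over $[-h,0]$ through $\frac{1}{h}\int_{-h}^0\dth=1$ packages the three surviving contributions into the block-diagonal kernel $\diag(W_1,W_2/h,W_3)$ of~\eqref{eq:dV}.

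\textbf{Main obstacle.} The difficulty is not conceptual but one of bookkeeping: combining the two applications of the Leibniz rule (single and double integral), the integration by parts, and the symmetry $U(\theta)=U^\top(-\theta)$ in the right order so that every cross-term aligns with a coefficient supplied by the Lyapunov matrix equation. Once the interior integrals are cast in the form above, the cancellations are automatic and no further estimate is required.
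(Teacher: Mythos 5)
Your computation is the standard complete-type Lyapunov--Krasovskii derivation and is essentially the proof of the result the paper invokes: for this lemma the paper only cites \cite[Theorem~2.11]{Kharitonov2013}, and that cited proof proceeds exactly as you do (termwise differentiation, Leibniz rule and integration by parts, cancellation of the interior integrals via the dynamic property of $U$ extended to positive arguments through the symmetry $U(\theta)=U^\top(-\theta)$, and the algebraic boundary condition collapsing the coefficient of $\norm{x_t(0)}^2$ to $-W$). The only cosmetic imprecision is that the interior integral weighted by $x_t(-h)$ cancels directly between the single-integral and double-integral contributions rather than through the matrix delay equation, which does not affect the validity of the argument.
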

\begin{proof}
The proof is provided in~\cite[Theorem 2.11]{Kharitonov2013}.
\end{proof}

\vspace{-0.4cm}
\begin{remark}\label{rem:lyapcond}
For any symmetric positive definite matrices $(W_1,W_2,W_3)$ and by assuming that there is no characteristic root of~\eqref{eq:tds} such that its opposite is also characteristic root, it is worth mentioning that this functional exists and is the unique solution of~\eqref{eq:dV}. For more details, one can refer to~\cite[Theorems 2.8 and 2.10]{Kharitonov2013}.
\end{remark}

It is also well-known that the complete funtional $\mathcal{V}$ is positive definite if the trivial solution of system~\eqref{eq:tds} is GES as demonstrated in~\cite[Theorem 5.19]{Gu2003}. In light of paper purpose, a variation of this result is proposed, which will corresponds exactly to the present framework. 
\begin{lemma}\label{lem:CN2n}
Consider $(W_1,W_2,W_3)$ in $\mathbb{S}^{n_x}_+\times \mathbb{S}^{n_z}_+\times \mathbb{S}^{n_z}_+$.
If the trivial solution of system~\eqref{eq:tds} is GES, there exists a positive scalar $\varepsilon>0$ such that the complete functional $\mathcal{V}$ given by \eqref{eq:V0} satisfies
\begin{equation}\label{eq:Vnineq1}
\begin{aligned}
     &\mathcal{V}(\varphi) - \int_{-h}^0(\theta+h)\varphi^\top(\theta) C^\top W_2C\varphi(\theta)\dth\\
     &\qquad\qquad\qquad \geq  \displaystyle\!\varepsilon \left(\norm{\varphi(0)}^2 \!+\!\int_{-h}^0|C\varphi(\theta)|^2\dth\right)\!,
\end{aligned}
\end{equation}
for any $\varphi$ in $\mathcal{C}_{pw}(-h,0;\mathbb{R}^{n_x})$.
\end{lemma}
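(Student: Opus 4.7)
The plan is to start from the converse identity
\begin{equation*}
    \mathcal V(\varphi) = -\int_0^\infty \dot{\mathcal V}(x_t)\,dt,
\end{equation*}
which is valid because, under GES, the solution $x(t,\varphi)$ decays exponentially and therefore $\mathcal V(x_t)\to 0$ as $t\to\infty$. Plugging in the expression of $\dot{\mathcal V}$ from Lemma~\ref{lem:dV} and performing the trivial $\theta$-integration in front of the $W_1$ and $W_3$ contributions, this rewrites as
\begin{align*}
\mathcal V(\varphi) = \int_0^\infty\!\Bigl(&x^\top W_1 x + x^\top(t{-}h)C^\top W_3 Cx(t{-}h)\\
&+ \int_{-h}^0\! x^\top(t{+}\theta)C^\top W_2 Cx(t{+}\theta)\,d\theta\Bigr)dt.
\end{align*}

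Next, I would apply Fubini to the double integral and a shift of variable to the $W_3$ term in order to isolate the contribution of the initial data $\varphi$. A short computation yields
\begin{align*}
&\int_0^\infty\!\!\!\int_{-h}^0\!\! x^\top(t{+}\theta)C^\top W_2 Cx(t{+}\theta)\,d\theta\,dt\\
&\;=\int_{-h}^0\!(\theta{+}h)\varphi^\top C^\top W_2 C\varphi\,d\theta + h\!\!\int_0^\infty\! x^\top C^\top W_2 C x\,dt,
\end{align*}
and analogously the $W_3$ term produces $\int_{-h}^0\!\varphi^\top C^\top W_3 C\varphi\,d\theta+\int_0^\infty x^\top C^\top W_3 C x\,dt$. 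Subtracting the quantity targeted by the lemma then gives the clean representation
\begin{align*}
&\mathcal V(\varphi) - \int_{-h}^0\!(\theta{+}h)\varphi^\top C^\top W_2 C\varphi\,d\theta\\
&\quad = \int_0^\infty\! x^\top W x\,dt + \int_{-h}^0\!\varphi^\top C^\top W_3 C\varphi\,d\theta,
\end{align*}
in which $W = W_1 + hC^\top W_2 C + C^\top W_3 C \succeq W_1 \succ 0$ is exactly the matrix appearing in~\eqref{def:W}.

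The conclusion then boils down to lower-bounding these two nonnegative terms. The second is immediately $\geq \sigmaD{W_3}\!\int_{-h}^0\!\norm{C\varphi(\theta)}^2\,d\theta$. For the first, $\int_0^\infty x^\top W x\,dt \geq \sigmaD{W_1}\int_0^\infty\!\norm{x(t)}^2dt$, and I would bound $\int_0^T\!\norm{x(t)}^2dt$ from below on a short window $[0,T]\subset[0,h]$ via an ODE estimate. Writing $\varphi(0)=x(t)-\int_0^t\dot x\,ds$ and applying Cauchy--Schwarz gives $\norm{\varphi(0)}^2\leq 2\norm{x(t)}^2+2t\!\int_0^t\!\norm{\dot x}^2\,ds$; combined with the sharp bound $\norm{\dot x}^2\leq 2\norm{A}^2\norm{x}^2+2\norm{B}^2\norm{Cx(\cdot{-}h)}^2$---obtained from $\dot x = Ax + BCx(t{-}h)$---integration over $[0,T]$ produces
\begin{equation*}
    \int_0^T\!\norm{x(t)}^2\,dt \geq \frac{T\norm{\varphi(0)}^2 - 2T^2\norm{B}^2\!\int_{-h}^0\!\norm{C\varphi}^2\,d\theta}{2+2T^2\norm{A}^2}.
\end{equation*}

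Choosing $T\in(0,h]$ small enough so that $\tfrac{2T^2\norm{B}^2\sigmaD{W_1}}{2+2T^2\norm{A}^2}\leq \tfrac{1}{2}\sigmaD{W_3}$---always feasible since the left-hand side vanishes as $T\to 0$---the two estimates combine into the announced inequality with $\varepsilon = \min\!\bigl(\tfrac{T\sigmaD{W_1}}{2+2T^2\norm{A}^2},\,\tfrac{1}{2}\sigmaD{W_3}\bigr)>0$. The only delicate point of the whole argument is this ODE-based lower bound: a naive estimate would leave a full $\int_{-h}^0\!\norm{\varphi(\theta)}^2 d\theta$ on the right-hand side, which the $W_3$ contribution cannot absorb; the factorisation $A_d=BC$ is precisely what makes the history of $\varphi$ enter only through $\norm{C\varphi}^2$ and hence be controllable by $W_3$.
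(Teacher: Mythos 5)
Your proof is correct. The exact identity you derive,
\begin{equation*}
\mathcal V(\varphi) - \int_{-h}^0(\theta+h)\varphi^\top(\theta) C^\top W_2C\varphi(\theta)\dth = \int_0^\infty x^\top(t) W x(t)\,\mathrm{d}t + \int_{-h}^0\varphi^\top(\theta) C^\top W_3C\varphi(\theta)\dth,
\end{equation*}
checks out (the Fubini exchange and the change of variables are right, and $W\succeq W_1\succ0$), and your short-window observability estimate $\int_0^T\norm{x(t)}^2\mathrm{d}t\geq\big(T\norm{\varphi(0)}^2-2T^2\norm{B}^2\int_{-h}^0\norm{C\varphi}^2\dth\big)/(2+2T^2\norm{A}^2)$ for $T\leq h$ is valid and does close the argument. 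Your route differs from the paper's in the key step. The paper (Appendix~D) defines the perturbed functional $\mathcal W(\varphi)=\mathcal V(\varphi)-\int_{-h}^0(\theta+h)\varphi^\top C^\top W_2C\varphi\,\dth-\varepsilon\int_{-h}^0(\norm{\varphi(0)}^2+\norm{C\varphi(\theta)}^2)\dth$, uses Lemma~\ref{lem:dV} to show that $\dot{\mathcal W}$ is a quadratic form in $(x_t(0),Cx_t(-h))$ alone and is therefore made nonpositive pointwise in time by taking $\varepsilon$ small (the perturbation being dominated by $\mathrm{diag}(W_1+hC^\top W_2C,\,W_3)\succ0$), and then integrates over $[0,\infty)$ to conclude $\mathcal W(\varphi)\geq0$. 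Both arguments share the backbone of integrating the derivative formula of Lemma~\ref{lem:dV} under GES; the difference is that the paper hides the extraction of $\norm{\varphi(0)}^2$ inside a one-line matrix-domination argument (no explicit $\varepsilon$), whereas you first isolate an exact integral representation and then prove a quantitative lower bound on $\int_0^T\norm{x}^2$ via the ODE. The paper's version is shorter and yields $\dot{\mathcal W}\leq0$ as a reusable intermediate fact; yours is more self-contained, produces an explicit admissible $\varepsilon$ in terms of $T$, $\sigmaD{W_1}$, $\sigmaD{W_3}$, $\norm{A}$, $\norm{B}$, and makes transparent (as you note) why the factorization $A_d=BC$ is what allows the history to be absorbed by the $W_3$ term.
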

\begin{proof}
The proof is postponed to Appendix~\ref{app3}.
\end{proof}

Lastly, the Lyapunov matrix~$U$ introduced in Definition~\ref{defn:U} satisfies several properties. The first one is related to its continuity. The second property concerns its derivative $U'$, which is only continuous over $(0,h]$ and $[-h,0)$ and has a discontinuity at $0$, which is measured as follows.

\begin{lemma}\label{prop:DeltaU}
The Lyapunov matrix~$U$ given by~\eqref{eq:U0} verifies the following properties.
\vspace{-0.1cm}
\begin{itemize}
\item[(i)] The Lyapunov matrix $U$ is continuous on $[-h,h]$ and
\vspace{-0.1cm}
\begin{equation*}
    U(0)=U^\top(0).
\end{equation*}
\item[(ii)]
The Lyapunov matrix $U$ is infinitely differentiable on $(0,h]$ and $[-h,0)$. Its derivative has a discontinuity only at $0$ so that
\vspace{-0.1cm}
\begin{equation*}
    \Delta U'(0):= \lim_{\epsilon\rightarrow 0}  \left( U'(\epsilon) - U'(-\epsilon) \right) =  W.
\end{equation*}
\end{itemize}
\end{lemma}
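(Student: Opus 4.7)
The plan is to derive both statements from the explicit expression~\eqref{eq:U0} combined with the defining relations of Definition~\ref{defn:U}, appealing to the uniqueness of the Lyapunov matrix stated in Remark~\ref{rem:lyapcond} only where needed.

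For part~(i), formula~\eqref{eq:U0} exhibits $U$ on $[-h,0]$ as a linear functional of the real-analytic map $\theta\mapsto e^{\theta\mathcal{M}}$, so $U$ is $C^\infty$ (in fact analytic) on $[-h,0]$. The symmetry definition $U(\theta)=U^\top(-\theta)$ for $\theta>0$ propagates this smoothness to $(0,h]$. Continuity of $U$ at $\theta=0$ then reduces to establishing $U(0)=U^\top(0)$, since the right-limit of $U$ at $0$ is $U^\top(0)$ while the left-limit is $U(0)$. To obtain this symmetry I would introduce the candidate $\tilde U(\theta):=U^\top(-\theta)$ on $[-h,0]$ and verify, using $W^\top=W$ (inherited from the symmetry of $W_1,W_2,W_3$), that $\tilde U$ satisfies the same delayed differential equation, the same symmetry, and the same algebraic boundary condition as $U$ (the last point amounts to transposing the condition in Definition~\ref{defn:U}). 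By the uniqueness statement of Remark~\ref{rem:lyapcond}, $\tilde U\equiv U$, and evaluating at $\theta=0$ gives $U(0)=U^\top(0)$, hence continuity of $U$ at the origin.

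For part~(ii), smoothness on $[-h,0)$ comes again from~\eqref{eq:U0}, and smoothness on $(0,h]$ follows by differentiating the symmetry relation and invoking the smoothness on $[-h,0)$. It only remains to compute the one-sided derivatives at $\theta=0$ and their difference. The delayed differential equation of Definition~\ref{defn:U} yields directly
\begin{equation*}
    U'(0^-) = -U(0)A - U(h)A_d,
\end{equation*}
while differentiating $U(\theta)=U^\top(-\theta)$ on $(0,h]$ and inserting the dynamic equation for the negative argument produces
\begin{equation*}
    U'(0^+) = A^\top U(0) + A_d^\top U(-h).
\end{equation*}
Subtracting these two one-sided limits and substituting the algebraic boundary condition, together with $U(h)=U^\top(-h)$ and the symmetry $U(0)=U^\top(0)$ from part~(i), then delivers $\Delta U'(0)=W$.

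The main difficulty I anticipate is precisely the symmetry $U(0)=U^\top(0)$: it is not visible from~\eqref{eq:U0} because the block matrices $\mathcal{M}$ and $\mathcal{N}$ in~\eqref{eq:MN0} do not display a manifest transpose symmetry. The uniqueness route sketched above is the cleanest workaround, reducing the claim to the symmetry of the algebraic boundary condition (which is automatic since $W^\top=W$). Once this is in hand, the smoothness claims are routine from the matrix-exponential structure of~\eqref{eq:U0}, and the jump computation is pure bookkeeping on the one-sided derivatives imposed by the dynamic and symmetry equations.
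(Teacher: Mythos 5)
The paper's own ``proof'' of this lemma is a one-line citation to Kharitonov's book, so your self-contained derivation is a genuinely different route and, in principle, a more informative one; the architecture (symmetry of $U(0)$ first, smoothness from the matrix-exponential structure of~\eqref{eq:U0}, then the jump from the two one-sided derivatives) is the right skeleton. However, both of the places you defer to ``uniqueness'' and ``bookkeeping'' hide real difficulties. For part (i), the uniqueness argument is circular as sketched: the candidate $\tilde U(\theta)=U^\top(-\theta)$ coincides with $U(\theta)$ for every $\theta\neq 0$ \emph{by the very symmetry relation that defines $U$ on $(0,h]$}, so the only point at which $\tilde U$ and $U$ could differ is $\theta=0$ itself. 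Uniqueness in the class of continuous solutions presupposes the continuity at $0$ you are trying to establish, while uniqueness ``up to a point'' says nothing about the value at that point. Moreover, checking that $\tilde U$ satisfies the algebraic boundary condition reduces to $\mathcal{H}(A^\top U^\top(0))=\mathcal{H}(A^\top U(0))$, which is equivalent to the symmetry of $U(0)$ being proved. The non-circular argument (and the one behind Kharitonov's Lemma 2.4) uses the integral representation $U(0)=\int_0^\infty K^\top(t)WK(t)\,\mathrm dt$ available under GES, or a careful reflection of the two-point boundary-value problem on $[0,h]$; you need one of these.

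For part (ii), your two one-sided limits are correct, but the subtraction gives
\begin{equation*}
\Delta U'(0)=A^\top U(0)+U(0)A+A_d^\top U(-h)+U(h)A_d=\mathcal{H}\!\left(A^\top U(0)+A_d^\top U(-h)\right),
\end{equation*}
whose $A_d$-terms are $A_d^\top U(-h)+U(h)A_d$, whereas the displayed algebraic condition of Definition~\ref{defn:U} carries $A_d^\top U(h)+U(-h)A_d$; these differ unless $U(h)-U(-h)$ commutes suitably with $A_d$, so the substitution you invoke does not go through literally. If instead you extract the boundary condition actually encoded by $\mathcal N$ in~\eqref{eq:U0} (unpacking its second block row with $\mathrm{vec}(M_1M_2M_3)=(M_3^\top\otimes M_1)\mathrm{vec}(M_2)$ gives exactly $\mathcal{H}(A^\top U(0)+A_d^\top U(-h))=-W$), your computation yields $\Delta U'(0)=-W$, not $+W$. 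So the ``pure bookkeeping'' is precisely where the content is: you must reconcile this sign and the $U(h)$ versus $U(-h)$ mismatch before claiming the stated identity. (Note that only $\norm{\Delta U'(0)}=\norm{W}$ is used later in the paper, in the bound leading to~\eqref{eq:r3}, so the discrepancy is harmless downstream, but a proof of the lemma as stated cannot leave it unresolved.)
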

\begin{proof}
The two items are proved in~\cite[Lemma 2.4 and Lemma 2.6]{Kharitonov2013}, respectively.
\end{proof}

These regularity conditions satisfied by the Lyapunov matrix have been used for $\mathcal{H}_2$~\cite{Jarlebring2011} or~$\mathcal{H}_\infty$~\cite{Kharitonov2002} analysis. They will also be at the heart of the derivation of the convergence results. 

To summarize, this subsection emphasized the structure of the complete functional whose kernels are defined by the Lyapunov matrix $U$. This particular structure has the benefits of ensuring by construction that its time-derivative along the trajectories of the system verifies~\eqref{eq:dV} and that the positive definiteness is ensured for GES systems.

\subsection{Construction of matrices $\mathbf{P}_n$, $R$ and $S$}

In this subsection, the objective is to understand how to relate the functional defined in \eqref{thm1:defV} and the complete one in \eqref{eq:V0}, for a particular structure of matrices $(\mathbf{P}_n,R,S)$.
To do so, the main idea is to exploit the terms of~\eqref{eq:V0} that are expressed in $U(\theta+h)B$ and $B^\top U(\theta_2-\theta_1)B$. The Legendre polynomial approximation of these functions at any order $n$ in $\mathbb{N}^{\ast}$ writes
\begin{equation}\label{eq:Untilde0}
\begin{array}{rcll}
U(\theta+h) B&=&  \mathbf{U}_{1,n}  \ell_n\left(\frac{\theta+h}{h}\right)+\tilde{U}_{1,n}(\theta), \, &\forall \theta\in[-h,0],\\
B^\top U(\theta) B&=&   \mathbf{U}_{2,n} \ell_n\!\left(\frac{\theta+h}{2h}\right)  +\tilde{U}_{2,n}(\theta) ,\, &\forall \theta\in[-h,h].
\end{array}
\end{equation}
In this decomposition, the constant matrices $\mathbf{U}_{1,n}$ and $\mathbf{U}_{2,n}$ have been selected as the orthogonal projection of $U(\theta+h) B$ and $B^\top U(\theta) B$, respectively, on the $n$ first Legendre polynomials $\ell_n$. Their expressions are given by
\begin{equation}\label{eq:Unapprox}
\begin{aligned}
    \mathbf{U}_{1,n} &=  \frac{1}{h}\left( \int_{-h}^0 U(\theta+h)B\ell_n^\top\left(\frac{\theta+h}{h}\right)\dth \right)\mathcal{I}_n,\\
    \mathbf{U}_{2,n} &=  \frac{1}{2h}\left(\int_{-h}^h B^\top U(\theta)B\ell_n^\top\left(\frac{\theta+h}{2h}\right)\dth\right)\mathcal{I}_n.
\end{aligned}
\end{equation}
Functions $\tilde{U}_{1,n}(\theta)$ and $\tilde{U}_{2,n}(\theta)$ can be interpreted as the approximation errors of the orthogonal projections, and verify
$$
\begin{array}{r}
\displaystyle \int_{-h}^0\!\!\! \tilde{U}_{1,n}(\theta)\ell_n^\top\!\!\left(\!\frac{\theta\!+\!h}{h}\!\right)\!\dth\!=\!
\underbrace{ \int_{-h}^0\!\! U(\theta\!+\!h) B\ell_n^\top\!\left(\!\frac{\theta\!+\!h}{h}\!\right)\!\dth}_{=\mathbf{U}_{1,n}(\mathcal{I}_n/h)^{-1}}\\
\qquad \qquad \displaystyle- \mathbf{U}_{1,n} \underbrace{\int_{-h}^0     \ell_n\left(\!\frac{\theta\!+\!h}{h}\!\right)\ell_n^\top\left(\!\frac{\theta\!+\!h}{h}\!\right)\dth}_{=(\mathcal{I}_n/h)^{-1}}=0.
\end{array}
$$ 
Similarly, the same calculations ensure that error $\tilde{U}_{2,n}$ is orthogonal to the $n$ first Legendre polynomials considered over $[-h,h]$, i.e. $$\int_{-h}^h \tilde{U}_{2,n}(\theta)\ell_n^\top\left(\frac{\theta+h}{2h}\right)\dth=0.$$

The next developments aim at demonstrating the uniform convergence of the polynomial approximation. Following the theory of polynomial approximation (see for instance~\cite{boyd2001} and references therein), it results that $\tilde{U}_{1,n}$ and $\tilde{U}_{2,n}$ given in~\eqref{eq:Untilde0} converge to zero in the sense of the $\mathcal{L}_2$ norm. This is actually a by-product of Bessel-Legendre inequality. This implies that the approximation errors converges to zero almost everywhere on  their domain of definition, as $n$ tends to infinity. Nevertheless, uniform convergence properties can even be obtained using the regularity of the Lyapunov matrix.

\begin{lemma}\label{lem:convergence}
Consider the Lyapunov matrix $U$ for the time-delay system \eqref{eq:tds} defined for any $(W_1,W_2,W_3)$ in $\mathbb{S}^{n_x}_+\times \mathbb{S}^{n_z}_+\times \mathbb{S}^{n_z}_+$. The following statements hold.
\vspace{-0.3cm}
\begin{itemize}
\item [(i)] The approximated Legendre function $\mathbf{U}_{1,n}\ell_n\left(\frac{\theta+h}{h}\right)$ converges uniformly to $U(\theta+h)B$, when $n$ tends to infinity, on the closed interval $[-h,0]$. 
\newline More precisely, for any $n\geq4$, the following inequality holds 
\begin{equation}\label{eq:Wang}
 \underset{\theta\in[-h,0]}{\mathrm{sup}}\!\norm{\tilde{U}_{1,n}(\theta)} \leq \bar{u}_{1,n}:=\frac{\varrho_1}{\sqrt{n-3}} \norm{W},
\end{equation}
where 
\vspace{-0.2cm}
\begin{equation}\label{eq:r1}
    \varrho_1  = \sqrt{n_x} \left(\frac{\pi}{2}\right)^\frac{3}{2}e^{h\norm{\mathcal{M}}}\norm{\mathcal{M}^2\mathcal{N}^{-1}}h^2 \norm{B},\;
\end{equation}
and matrices $\mathcal{M}$, $\mathcal{N}$ and $W$ are given by~\eqref{eq:MN0}.
\item[(ii)] The approximated Legendre function $\frac{1}{h}\mathbf{U}_{1,n} \ell_n^\prime \left(\frac{\theta+h}{h}\right)$ converges uniformly to $U^\prime(\theta+h)B$, when $n$ tends to infinity, on the interval $[-h,0)$. 
\newline  More precisely, for any $n\geq6$, the following inequality holds 
\begin{equation}\label{eq:Wang2}
  \underset{\theta\in[-h,0)}{\mathrm{sup}} \norm{\tilde{U}^\prime_{1,n}(\theta)} \leq \bar{u}_{2,n}:=\frac{\varrho_2}{\sqrt{n-5}}\norm{W},
\end{equation}
where 
\vspace{-0.2cm}
\begin{equation}\label{eq:r2}
    \varrho_2  = \frac{1}{2}\sqrt{n_x} \left(\frac{\pi}{2}\right)^\frac{3}{2}e^{h\norm{\mathcal{M}}}\norm{\mathcal{M}^4\mathcal{N}^{-1}}h^3\norm{B},\;
\end{equation}
and matrices $\mathcal{M}$, $\mathcal{N}$  and $W$ are given by~\eqref{eq:MN0}.
\item[(iii)] The approximated Legendre function $\mathbf{U}_{2,n} \ell_n(\frac{\theta+h}{2h})$ converges uniformly to $B^\top U(\theta)B$, when $n$ tends to infinity, on the closed interval $[-h,h]$. 
\newline  More precisely, for any $n\geq4$, the following inequality holds 
\begin{equation}\label{eq:Wangbar}
  \underset{\theta\in[-h,h]}{\mathrm{sup}} \!\norm{\tilde{U}_{2,n}(\theta)}\! \leq \!      \bar{u}_{3,n}\!:=\!\frac{\varrho_3}{\sqrt{n-3}}\norm{W},
\end{equation}
where 
\vspace{-0.2cm}
\begin{equation}\label{eq:r3}
    \varrho_3  = \sqrt{2\pi} \big( 1 +  \sqrt{n_x}\pi h e^{h\norm{\mathcal{M}}}\norm{\mathcal{M}^2\mathcal{N}^{-1}}\big) h \norm{B}^2,\;
\end{equation}
and matrices $\mathcal{M}$, $\mathcal{N}$  and $W$ are given by~\eqref{eq:MN0}.
\end{itemize}
\end{lemma}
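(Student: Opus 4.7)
The plan is to apply a uniform Legendre-approximation estimate on each smooth piece of the function being approximated, and to control the required smoothness constants through explicit derivative bounds coming from the analytical form~\eqref{eq:U0}. Concretely, I would first state (or re-derive, along Wang--Xiang lines) a scalar lemma of the form: if $g \in C^k([-1,1])$, then the truncated Legendre expansion $S_n g$ of degree $n-1$ satisfies $\sup_{x\in[-1,1]} |g(x) - S_n g(x)| \le (\pi/2)^{3/2}\,\sup_{x\in[-1,1]}|g^{(k)}(x)| / \sqrt{n-(2k-1)}$ for $n$ large enough, using $k=2$ for parts~(i) and~(iii) and $k=4$ for part~(ii). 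For matrix-valued targets, entry-wise application combined with $\norm{\mathrm{vec}(W)} = \normF{W} \le \sqrt{n_x}\norm{W}$ accounts for the overall $\sqrt{n_x}$ factor appearing in $\varrho_1$, $\varrho_2$, $\varrho_3$.

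For part~(i), the map $\theta \mapsto U(\theta+h)B$ is real-analytic on $[-h,0]$ through~\eqref{eq:U0}. I would differentiate twice in $\theta$: since $U(\theta+h)$ is expressed as $\mathrm{vec}^{-1}$ of $e^{\theta\mathcal{M}}$ applied to $\mathcal{N}^{-1}\mathrm{vec}(W)$ (after using $U(\theta)=U^\top(-\theta)$ if needed), the second derivative produces an $\mathcal{M}^2$ factor. Bounding $\norm{e^{s\mathcal{M}}} \le e^{h\norm{\mathcal{M}}}$ for $s\in[-h,h]$, inserting $\norm{B}$ to extract the factor $B$, and tracking the $h^2$ from the chain-rule rescaling $\theta \in [-h,0] \mapsto (\theta+h)/h \in [0,1]$ precisely yields the constant $\varrho_1$ in~\eqref{eq:r1}. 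Part~(ii) follows identically but with two extra derivatives needed to control the second derivative of $U'(\theta+h)B$, producing the $\mathcal{M}^4$ factor and the extra $h$ in $\varrho_2$; the restriction to the half-open interval $[-h,0)$ reflects the jump of $U'$ at $0$ guaranteed by Lemma~\ref{prop:DeltaU}(ii), while the factor $\tfrac{1}{2}$ in~\eqref{eq:r2} comes from the $1/h$ scaling in $\tfrac{1}{h}\mathbf{U}_{1,n}\ell_n'$.

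Part~(iii) is the delicate one: $B^\top U(\theta) B$ is only continuous on $[-h,h]$, its first derivative having a jump of size $\norm{B^\top W B}$ at $\theta=0$ by Lemma~\ref{prop:DeltaU}(ii). I would split the error into two contributions. On one hand, a bounded-variation Legendre estimate which yields the ``$1$'' in $\varrho_3$ together with the $\sqrt{2\pi/(n-3)}$ rate, the relevant total variation being bounded by $h\norm{B}^2\norm{W}$ reflecting the jump and growth of $B^\top U B$ on $[-h,h]$. On the other hand, applying the smooth estimate of the first paragraph separately on each half-interval where $B^\top U(\cdot) B$ is real-analytic provides the $\sqrt{n_x}\pi h e^{h\norm{\mathcal{M}}}\norm{\mathcal{M}^2\mathcal{N}^{-1}}$ contribution. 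Combining both terms yields~\eqref{eq:Wangbar} with $\varrho_3$ as in~\eqref{eq:r3}. The main obstacle is precisely this splitting: the function fails to be $C^1$ at $\theta=0$, so the faster Legendre decay of parts~(i)--(ii) cannot be invoked directly on the whole $[-h,h]$, and one must argue carefully using bounded variation together with piecewise smoothness, checking that both contributions combine into a single clean $1/\sqrt{n-3}$ uniform rate.
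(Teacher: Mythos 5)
Your proposal follows essentially the same route as the paper: Appendix~E first establishes exactly your derivative bounds (Property~\ref{prop:regul}, with $\normL{U^{(k)}}\leq \sqrt{n_x}\,e^{h\norm{\mathcal{M}}}\norm{\mathcal{M}^k\mathcal{N}^{-1}}\norm{W}$ from the explicit exponential formula~\eqref{eq:U0}), then re-derives the Wang-type uniform Legendre estimate by repeated integration by parts using the differentiation rule~\eqref{eq:derlk} and the pointwise bound~\eqref{eq:boundlk2}, with two integrations for items~(i) and~(iii), four for item~(ii), and a splitting of the integration by parts at the derivative discontinuity $\theta=0$ producing the $\Delta U'(0)=W$ boundary term (your ``bounded-variation'' contribution, the $1$ in $\varrho_3$) for item~(iii).

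One point deserves care if you execute the plan literally: your scalar lemma bounds $\sup\norm{g-S_ng}$, but item~(ii) concerns the \emph{derivative of the partial sum}, $\frac{1}{h}\mathbf{U}_{1,n}\ell_n'$, which is not the partial sum of $U'(\cdot+h)B$. Applying your lemma with $k=4$ to $U(\cdot+h)B$ therefore does not directly yield~\eqref{eq:Wang2}; what is needed is a companion estimate on $\sum_{k\geq n}\norm{\mathbf{U}_1^k}\,\norm{l_k'}$, where $\norm{l_k'}\leq k(k+1)$ by~\eqref{eq:boundderlk} forces the coefficients to decay like $k^{-7/2}$ --- this is precisely why four derivatives of $U$ (hence $\mathcal{M}^4$) are required rather than the three that ``the second derivative of $U'$'' would suggest. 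Your final constants indicate you intend the right estimate, but the stated key lemma as formulated does not cover case~(ii).
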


\begin{proof}
The proof follows the arguments provided in~\cite[Theorem~2.5]{Wang2012} and is based on the regularity properties of the Lyapunov matrix $U$ highlighted in Lemma~\ref{prop:DeltaU}. The proof is postponed to Appendix~\ref{app1}, for the sake of readability.
\end{proof}
The previous lemma provides uniform upper bounds on the error done by three approximations related to the Lyapunov matrix $U$. 
These uniform convergence results are only presented as the polynomial approximations of $U$ but addresses more general problems of functional analysis and approximation of continuous functions.

All these upper bounds depend explicitly on the arbitrary symmetric positive definite matrices $(W_1,W_2,W_3)$ through the term $\norm{W}$.

As a final result, Lemma \ref{lem:convergence} implies the following corollary.
\begin{corollary}\label{cor:bounds}
For any positive scalar $\eta>0$, there exists an integer $N^\ast_\eta$ such that 
\begin{equation}
\max\left( \bar{u}_{1,n}, \bar{u}_{2,n},\bar{u}_{3,n}\right) \leq \eta,\quad \forall n\geq N^\ast_\eta.
\end{equation}
\end{corollary}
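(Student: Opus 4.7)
The plan is to invert each of the three bounds from Lemma~\ref{lem:convergence} explicitly in $n$. Each of $\bar{u}_{1,n}$, $\bar{u}_{2,n}$, $\bar{u}_{3,n}$ has the form $\varrho_i\norm{W}/\sqrt{n-a_i}$ with $a_1=a_3=3$ and $a_2=5$, so on its domain of validity each quantity is strictly decreasing in $n$ and tends to zero. The requirement $\bar u_{i,n}\leq\eta$ therefore translates into an explicit lower bound on $n$, and it suffices to take the largest of the three resulting thresholds.

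Concretely, for a prescribed $\eta>0$ I would solve
$$\frac{\varrho_1\norm{W}}{\sqrt{n-3}}\leq\eta,\qquad \frac{\varrho_2\norm{W}}{\sqrt{n-5}}\leq\eta,\qquad \frac{\varrho_3\norm{W}}{\sqrt{n-3}}\leq\eta,$$
yielding respectively the thresholds $n\geq 3+\varrho_1^2\norm{W}^2/\eta^2$, $n\geq 5+\varrho_2^2\norm{W}^2/\eta^2$, and $n\geq 3+\varrho_3^2\norm{W}^2/\eta^2$. Setting
$$N^\ast_\eta=\max\!\left(6,\ \left\lceil 5+\frac{\max(\varrho_1^2,\varrho_2^2,\varrho_3^2)\,\norm{W}^2}{\eta^2}\right\rceil\right)$$
ensures the three inequalities hold simultaneously for every $n\geq N^\ast_\eta$. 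The outer $\max$ with $6$ is simply there to respect the domains of validity ($n\geq 4$ for items (i) and (iii), $n\geq 6$ for item (ii)) stated in Lemma~\ref{lem:convergence}.

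There is no genuine obstacle here: the corollary merely repackages the three uniform convergences of Lemma~\ref{lem:convergence} into a single threshold controlling all three approximation errors at once. What does matter, and is worth highlighting in the proof, is that the expression for $N^\ast_\eta$ is fully explicit in the data of the system (through $\varrho_1,\varrho_2,\varrho_3$, which depend only on $A$, $A_d$, $h$, $n_x$, $B$ via $\mathcal{M}$ and $\mathcal{N}$) and in the matrices $(W_1,W_2,W_3)$ (through $\norm{W}$). This explicit dependence is what will later make it possible to turn the existence statement of Section~\ref{sec2} into the quantitative estimate of $N^\ast$ announced for Section~\ref{sec3}, when this corollary is invoked to guarantee feasibility of LMI~\eqref{eq:CSlmi2}.
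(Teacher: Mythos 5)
Your proof is correct and takes essentially the same route as the paper, whose own proof is a one-line observation that the three upper bounds tend to zero as $n\to\infty$. The only difference is that you invert the bounds to make $N^\ast_\eta$ explicit, which is harmless and in fact anticipates the quantitative estimate the paper derives later in Theorem~\ref{thm:CN2}.
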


\begin{proof} 
Since the upper bounds of approximation errors tend to $0$ as $n$ tends to infinity, the existence of such $N_\eta^\ast$ for any $\eta>0$ is guaranteed.
\end{proof}

We are now in position to construct matrices $(\mathbf{P}_n,R,S)$ for given matrices $(W_1,W_2,W_3)$. The guiding principle is to mimic the complete Lyapunov-Krasovskii functional, replacing $U(\theta+h)B$ and $B^\top U(\theta_2-\theta_1)B$ by their Legendre polynomial approximation given by~\eqref{eq:Untilde0} in order to benefit from its properties as formulated in the following lemma.

\begin{lemma}\label{lem:Vn}
For any $(W_1,W_2,W_3)$ in $\mathbb{S}^{n_x}_+\times \mathbb{S}^{n_z}_+\times \mathbb{S}^{n_z}_+$, define the functional $\mathcal{V}_n$ by~\eqref{thm1:defV} with matrices
\begin{equation}\label{lem:VnDef:Pn}
\begin{array}{l}
    \Pn=\begin{bmatrix}
    U(0) & \mathbf{U}_{1,n}\\
    \ast & \mathbf{T}_{n}
    \end{bmatrix}, \quad  R=W_2,\quad S=W_3,\\
    \mathbf{T}_n \!=\!\! \displaystyle \!\!\!\overset{\ 0\ \ 0}{\underset{-h\ -h}{\iint}}\!\!\! \frac{\mathcal{I}_n}{h}\ell_n\!\!\left(\!\frac{\theta_1\!+\!h}{h}\!\right)\!\mathcal{U}_{2,n}(\theta_2\!-\!\theta_1\!)\ell_n^\top \!\!\left(\!\frac{\theta_2\!+\!h}{h}\!\right)\!\frac{\mathcal{I}_n}{h}\dth_{\!1}\dth_{\!2},\\
    \mathcal{U}_{2,n}(\theta) \!=\!\displaystyle \mathbf{U}_{2,n} \ell_n\!\left(\frac{\theta+h}{2h}\right),
\end{array}
\end{equation} 
where $\mathbf{U}_{1,n}$, $\mathbf{U}_{2,n}$ are given in \eqref{eq:Unapprox}. 
The time-derivative of this functional along the trajectories of system~\eqref{eq:tds} yields
\begin{equation}
    \dot{\mathcal{V}}_n(x_t) = -\frac{1}{h} \int_{-h}^0 \begin{bsmallmatrix}x_t(0)\\hCx_t(\theta)\\Cx_t(-h) \end{bsmallmatrix}^{\!\top}\!\Psi_n(\theta) \begin{bsmallmatrix}x_t(0)\\hCx_t(\theta)\\Cx_t(-h)\end{bsmallmatrix},
\end{equation}
where matrix
\begin{equation}\label{eq:psin}
    \Psi_n(\theta)\!:=\!
    \begin{bsmallmatrix}
     W_1\!+\!\mathcal{H}(\tilde {U}_{1,n}(0)C) &
    \Psi_n^{1}(\theta) & -\tilde{U}_{1,n}(-h)\\
    \ast&\frac{1}{h}W_2&\Psi_n^{2}(\theta) \\
    \ast&\ast&W_3
    \end{bsmallmatrix} \!\succ\! 0,
\end{equation}
is defined for any $n$ in $\mathbb{N}^{\ast}$ and for all $\theta$ in $[-h,0]$ with
\begin{equation*}
\begin{array}{rcl}
\Psi_n^{1}(\theta) &=& A^\top\tilde{U}_{1,n}(\theta)- \tilde{U}_{1,n}^\prime(\theta)+C^\top\tilde{U}_{2,n}(\theta),\\
\Psi_n^{2}(\theta)&=& B^\top\tilde{U}_{1,n}^\top(\theta)- \tilde{U} _{2,n}^\top(\theta+h),
\end{array}
\end{equation*}
and with $\tilde{U}_{1,n}$, $\tilde{U}^\prime_{1,n}$ and $\tilde{U}_{2,n}$ being the approximation errors of the Lyapunov matrix $U$ of system \eqref{eq:tds}.
\end{lemma}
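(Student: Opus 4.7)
The plan is to exploit the identity
\[
\mathcal{V}_n(x_t)=\mathcal{V}(x_t)-\mathcal{E}_n(x_t),
\]
where $\mathcal{V}$ is the complete functional of Definition~\ref{defn:U} and
\[
\mathcal{E}_n(x_t):=2x_t^{\top}(0)\!\!\int_{-h}^0\!\!\tilde{U}_{1,n}(\theta)Cx_t(\theta)\dth+\!\!\iint_{[-h,0]^2}\!\!\!\!Cx_t^{\top}(\theta_1)\tilde{U}_{2,n}(\theta_2-\theta_1)Cx_t(\theta_2)\dth_1\dth_2
\]
collects only the approximation errors from~\eqref{eq:Untilde0}. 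Then $\dot{\mathcal{V}}_n=\dot{\mathcal{V}}-\dot{\mathcal{E}}_n$, Lemma~\ref{lem:dV} supplies the clean diagonal form of $\dot{\mathcal{V}}$, and the task reduces to computing $\dot{\mathcal{E}}_n$ explicitly and reading off the off-diagonal blocks of $\Psi_n(\theta)$.

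To prove $\mathcal{V}_n=\mathcal{V}-\mathcal{E}_n$, the choice $R=W_2$, $S=W_3$ already aligns the pointwise integral parts. Substituting~\eqref{eq:Untilde0} into the remaining kernels of~\eqref{eq:V0} produces $2x_t^{\top}(0)\mathbf{U}_{1,n}\zeta_n(Cx_t)$ (matching the $(1,2)$-block of $\mathbf{P}_n$) plus the $\tilde{U}_{1,n}$-part of $\mathcal{E}_n$ for the single integral, and $\iint Cx_t^{\top}\mathcal{U}_{2,n}(\theta_2-\theta_1)Cx_t$ plus the $\tilde{U}_{2,n}$-part of $\mathcal{E}_n$ for the double integral. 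The decisive identity is $\iint Cx_t^{\top}\mathcal{U}_{2,n}Cx_t=\zeta_n^{\top}\mathbf{T}_n\zeta_n$: writing $Cx_t(\theta)=\ell_n^{\top}(\tfrac{\theta+h}{h})\tfrac{\mathcal{I}_n}{h}\zeta_n(Cx_t)+Q_n(\theta)$, where $Q_n$ is the $\mathcal{L}_2([-h,0])$-residual of the Legendre projection, the entries of $\mathcal{U}_{2,n}(\theta_2-\theta_1)=\mathbf{U}_{2,n}\ell_n(\tfrac{\theta_2-\theta_1+h}{2h})$ are polynomials of degree at most $n-1$ in each of $\theta_1,\theta_2$ separately; hence every cross or residual integral involving $Q_n$ vanishes, leaving only the projection-projection contribution, which one directly identifies with $\zeta_n^{\top}\mathbf{T}_n\zeta_n$ via~\eqref{lem:VnDef:Pn}.

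The computation of $\dot{\mathcal{E}}_n$ uses $\partial_t x_t(\theta)=\partial_\theta x_t(\theta)$ together with $\dot{x}_t(0)=Ax_t(0)+BCx_t(-h)$. Integration by parts on the single integral generates the boundary contributions $\tilde{U}_{1,n}(0)Cx_t(0)$, $-\tilde{U}_{1,n}(-h)Cx_t(-h)$ and the interior term in $\tilde{U}'_{1,n}$. For the double integral, the symmetry $\tilde{U}_{2,n}(-\tau)=\tilde{U}_{2,n}^{\top}(\tau)$ (inherited from $U(-\tau)=U^{\top}(\tau)$ in Lemma~\ref{prop:DeltaU}) merges the two $\partial_t$-contributions into twice the $\partial_{\theta_1}$-derivative; integrating by parts in $\theta_1$, the would-be interior term $\iint Cx_t^{\top}\tilde{U}'_{2,n}Cx_t$ is killed by the antisymmetry $\tilde{U}'_{2,n}(-\tau)=-(\tilde{U}'_{2,n}(\tau))^{\top}$, leaving only boundary terms in $\tilde{U}_{2,n}(\theta)$ and $\tilde{U}_{2,n}(\theta+h)$. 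Collecting all contributions and recognising $\Psi_n^1$ and $\Psi_n^2$ gives the desired expression for $\dot{\mathcal{E}}_n$; subtracting from $\dot{\mathcal{V}}=-\tfrac{1}{h}\int_{-h}^0[\cdot]^{\top}\mathrm{diag}(W_1,\tfrac{1}{h}W_2,W_3)[\cdot]\dth$ and expanding $-\tfrac{1}{h}\int_{-h}^0[\cdot]^{\top}\Psi_n(\theta)[\cdot]\dth$ block by block yields the claimed formula for $\dot{\mathcal{V}}_n$.

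The main obstacle is the orthogonality identity $\iint Cx_t^{\top}\mathcal{U}_{2,n}Cx_t=\zeta_n^{\top}\mathbf{T}_n\zeta_n$, which is slightly delicate because $\mathcal{U}_{2,n}$ is built from the Legendre basis on $[-h,h]$ while $Q_n$ is the projection residual on $[-h,0]$. What rescues the argument is that, for each fixed value of one variable, the map in the other variable remains a polynomial of degree at most $n-1$ when restricted to $[-h,0]$, and this space is exactly spanned by the Legendre basis defining $Q_n$. Beyond this observation, the symmetrisation arguments for $\dot{\mathcal{E}}_n$ and the final block-by-block matching are mechanical but require careful bookkeeping of boundary contributions.
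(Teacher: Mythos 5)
Your proposal is correct and follows essentially the same route as the paper: write $\mathcal{V}_n=\mathcal{V}-\mathcal{E}_n$ with $\mathcal{E}_n$ collecting the $\tilde U_{1,n}$ and $\tilde U_{2,n}$ error terms, use the degree-$(n-1)$ structure of $\mathcal{U}_{2,n}(\theta_2-\theta_1)$ together with Legendre orthogonality to identify the double-integral kernel with $\zeta_n^\top\mathbf{T}_n\zeta_n$, then differentiate, integrate by parts, and invoke the symmetry $\tilde U_{2,n}(-\tau)=\tilde U_{2,n}^\top(\tau)$ (Property~\ref{Prop:U2nSym}, whose proof needs the evenness identity~\eqref{eq:evenlk} in addition to $U(-\tau)=U^\top(\tau)$) to cancel the interior $\tilde U'_{2,n}$ terms and collect the boundary contributions into $\Psi_n^1$ and $\Psi_n^2$. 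The only difference is cosmetic: you project $Cx_t$ onto the Legendre basis and kill the residual cross-terms, whereas the paper decomposes the kernel itself as $\ell_n^\top(\frac{\theta_1+h}{h})\mathbf{T}_n\ell_n(\frac{\theta_2+h}{h})$; the two are equivalent.
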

\begin{proof}
To begin with, re-injecting the expression of $U(\theta+h) B$ and $B^\top U(\theta) B$ and using their approximation given by~\eqref{eq:Untilde0} into the complete functional $\mathcal V$ leads to
\begin{equation}\label{eq:V1}
\begin{array}{l}
    \mathcal{V}(x_t) = x_t^\top\!(0)U(0)x_t(0)\\
    + \displaystyle 2x_t^\top\!(0)\mathbf{U}_{1,n} \!\!\int_{-h}^0\!\!\!\!  \ell_n\!\left(\!\frac{\theta\!+\!h}{h}\!\right)\! Cx_t(\theta)\dth \\
    \!\!\displaystyle + \int_{-h}^0\! \int_{-h}^0\!\!\! x_t^\top  (\theta_1)C^\top  \mathbf{U}_{2,n} \ell_n\!\left(\!\frac{\theta_2\!-\!\theta_1\!+\!h}{2h}\!\right)  Cx_t(\theta_2)\dth_1\dth_2\\
 \displaystyle+ \int_{-h}^0x^\top _t(\theta) C^\top(W_2+(\theta+h)W_3)Cx_t(\theta)\dth \\
    \displaystyle + 2 x_t^\top \int_{-h}^0  \tilde{U}_{1,n}(\theta)Cx_t(\theta)\dth \\
    \displaystyle+ \int_{-h}^0\! \int_{-h}^0 x_t^\top  (\theta_1)C^\top  \tilde{U}_{2,n}(\theta_2-\theta_1)Cx_t(\theta_2)\dth_1\dth_2.
\end{array}
\end{equation}
Since $\mathcal{U}_{2,n}(\theta_2-\theta_1)=\mathbf{U}_{2,n} \ell_n\!\left(\frac{\theta_2-\theta_1+h}{2h}\right)$ belongs to $\mathbb{R}^{n_z\times n_z}$ and is a polynomial function of degree $n-1$ in both $\theta_1$ and $\theta_2$, it  can be decomposed using the basis of Legendre polynomials  $\ell_n\left(\frac{\theta_1+h}{h}\right)$ and $\ell_n\left(\frac{\theta_2+h}{h}\right)$. Thanks to the orthogonality of Legendre polynomials, this decomposition writes
$$
\mathbf{U}_{2,n} \ell_n\!\left(\!\!\frac{\theta_2\!-\!\theta_1\!+\!h}{2h}\!\!\right)\!=\!\ell_n^\top\!\left(\!\frac{\theta_1\!+\!h}{h}\!\right) \mathbf{T}_n\ell_n\!\left(\!\frac{\theta_2\!+\!h}{h}\!\right),
$$
where matrix $\mathbf{T}_n$ is the symmetric matrix given in \eqref{lem:VnDef:Pn}. Note that the symmetry of $\mathbf{T}_n$ is ensured by the symmetry of $\tilde{U}_{2,n}$ highlighted in Property~\ref{Prop:U2nSym} that has been postponed in Appendix~\ref{app2} in order to ease the reading. Hence, using the same augmented vector as in  the proof of Theorem \ref{thm:CSlmi}, i.e. $ \zeta_n( Cx_t)= \int_{-h}^0  \ell_n\left(\frac{\theta+h}{h}\right)Cx_t(\theta)\dth$, functional $\mathcal{V}$ reduces to the following expression
\begin{equation}\label{eq:V2}
\begin{array}{lcl}
    \mathcal{V}(x_t) = \begin{bmatrix}
    x_t(0)\\
     \zeta_n( Cx_t)
    \end{bmatrix}^\top\begin{bmatrix}
    U(0) & \mathbf{U}_{1,n}\\
    \ast & \mathbf{T}_n
    \end{bmatrix}\begin{bmatrix}
    x_t(0)\\
     \zeta_n( Cx_t)
    \end{bmatrix}\\
    + \displaystyle \int_{-h}^0x^\top _t(\theta) C^\top(W_2+(\theta+h)W_3)Cx_t(\theta)\dth \\
    + \displaystyle 2 x_t^\top(0) \int_{-h}^0  \tilde{U}_{1,n}(\theta)Cx_t(\theta)\dth \\
    	\!\!\displaystyle+ \int_{-h}^0\! \int_{-h}^0 x_t^\top  (\theta_1)C^\top  \tilde{U}_{2,n}(\theta_2-\theta_1)Cx_t(\theta_2)\dth_1\dth_2.
\end{array}
\end{equation}
Therefore, by selecting $\Pn$, $R$ and $S$ as in~\eqref{lem:VnDef:Pn},  functional $\mathcal{V}_n$ in~\eqref{thm1:defV} is retrieved and we obtain
\begin{equation*}
\begin{array}{lcl}
    \mathcal{V}_n(x_t) = \mathcal{V}(x_t)
 \!-\! \displaystyle 2x_t^\top(0) \int_{-h}^0  \tilde{U}_{1,n}(\theta)Cx_t(\theta)\dth \\
    	\quad\displaystyle- \int_{-h}^0\! \int_{-h}^0 x_t^\top  (\theta_1)C^\top  \tilde{U}_{2,n}(\theta_2-\theta_1)Cx_t(\theta_2)\dth_1\dth_2.\\
\end{array}
\end{equation*}
Differentiating the previous expression leads to
\begin{equation*}\label{eq:dV1}
\begin{array}{lcl}
   \dot{\mathcal{V}}_n(x_t) = \dot{\mathcal{V}}(x_t)
    - \displaystyle 2\dot x_t^\top(0) \int_{-h}^0  \tilde{U}_{1,n}(\theta)Cx_t(\theta)\dth \\
    \quad - \displaystyle 2 x_t^\top(0) \int_{-h}^0  \tilde{U}_{1,n}(\theta)C\dot x_t(\theta)\dth \\
    \quad\displaystyle- \int_{-h}^0\! \int_{-h}^0 \dot x_t^\top  (\theta_1)C^\top  \tilde{U}_{2,n}(\theta_2-\theta_1)Cx_t(\theta_2)\dth_1\dth_2.\\
    \quad\displaystyle- \int_{-h}^0\! \int_{-h}^0 x_t^\top  (\theta_1)C^\top  \tilde{U}_{2,n}(\theta_2-\theta_1)C\dot x_t(\theta_2)\dth_1\dth_2.\\
\end{array}
\end{equation*}
Since $\mathcal{V}$ is the complete functional, its derivative is expressed using $W_1,W_2,W_3$. Several integrations by parts lead to the following expression of $\dot{\mathcal{V}}_n$
\begin{equation*}\label{eq:dV2}
\begin{array}{lcl}
   \dot{\mathcal{V}}_n(x_t) =\displaystyle -\frac{1}{h}\!\! \int_{-h}^0\! \begin{bsmallmatrix}x_t(0)\\hCx_t(\theta)\\Cx_t(-h) \end{bsmallmatrix}^{\!\top}\!\! \begin{bsmallmatrix}W_1&0&0\\\ast&\frac{1}{h}W_2&0\\\ast&\ast&W_3\end{bsmallmatrix}\! \begin{bsmallmatrix}x_t(0)\\hCx_t(\theta)\\Cx_t(-h)\end{bsmallmatrix}\\
	\quad - \displaystyle 2(A x_t(0)+BC x_t(-h))^\top \int_{-h}^0  \tilde{U}_{1,n}(\theta)Cx_t(\theta)\dth \\
    \quad - \displaystyle 2 x_t^\top(0)\left( \tilde{U}_{1,n}(0)C x_t(0)- \tilde{U}_{1,n}(-h)C x_t(-h)\right)\\
	\quad + \displaystyle  2x_t^\top(0) \int_{-h}^0  \tilde{U}^\prime_{1,n}(\theta)C x_t(\theta)\dth \\
	\quad - \displaystyle  x_t^\top(0) \int_{-h}^0  C^\top\underbrace{(\tilde{U} _{2,n}(\theta)+\tilde{U} _{2,n}^\top (-\theta))}_{=2\tilde{U} _{2,n}(\theta)}C x_t(\theta)\dth \\
	\quad + \displaystyle  x_t^\top(-h)\!\! \int_{-h}^0 \!\! C^\top \underbrace{(\tilde{U} _{2,n}(\theta\!+\!h)\!+\! \tilde{U} _{2,n}^\top(\!-\!h\!-\!\theta))}_{=2\tilde{U} _{2,n}(\theta\!+\!h)}C x_t(\theta)\dth \\
    \quad + \displaystyle \int_{-h}^0\! \int_{-h}^0 x_t^\top  (\theta_1)C^\top  \tilde{U}^\prime_{2,n}(\theta_2-\theta_1)Cx_t(\theta_2)\dth_1\dth_2,\\
    \quad - \displaystyle \int_{-h}^0\! \int_{-h}^0 x_t^\top  (\theta_1)C^\top  \tilde{U}^\prime_{2,n}(\theta_2-\theta_1)Cx_t(\theta_2)\dth_1\dth_2.
  \end{array}
\end{equation*}
We first notice that the two last terms of the previous expression are opposite and thus sum up to zero. Moreover, Property \ref{Prop:U2nSym} given in Appendix~\ref{app2} helps reducing the expression of the terms that depend on $\tilde{U} _{2,n}$. Re-ordering the previous expression yields 
\begin{equation}\label{eq:dV3}
   \dot{ \mathcal{V}}_n(x_t) \!=\! -\frac{1}{h} \!\displaystyle\int_{-h}^0 
    \begin{bsmallmatrix}
    x_t(0)\\
    hCx_t(\theta)\\
    Cx_t(-h)\\
    \end{bsmallmatrix}^\top\!\! \Psi_n(\theta) \!
    \begin{bsmallmatrix}
    x_t(0)\\
    hCx_t(\theta)\\
    Cx_t(-h)\\
    \end{bsmallmatrix}
    \dth,
\end{equation}
where $\Psi_n$ is given in~\eqref{eq:psin} and concludes the proof.
\end{proof}

To summarize the previous developments, a candidate functional $\mathcal{V}_n$ has been built to fulfill the structure given in~\eqref{thm1:defV} and to follow the time-derivative of the complete functional that is defined for any symmetric positive definite matrices $(W_1,W_2,W_3)$.

\subsection{Necessary condition for LMI condition~\eqref{eq:CSlmi1}}\label{sec:35}

As an extension to Lemma~\ref{lem:CN2n}, the next developments aim at understanding how the positive definiteness condition on the complete functional~$\mathcal{V}$ can be transferred to the candidate functional~$\mathcal{V}_n$ and then to LMI~\eqref{eq:CSlmi1} at any order. This is formulated in the next lemma. 
\begin{lemma}\label{cor:CN1}
If the trivial solution of system~\eqref{eq:tds} is~GES, then condition $\Phi_{n}^+\succ 0$ holds with matrices $(\Pn,S,R)$ given by~\eqref{lem:VnDef:Pn} for all $n$ in $\mathbb{N}^\ast$.
\end{lemma}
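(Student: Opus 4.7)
The plan is to realize every nonzero $v=(v_0,v_1)\in\mathbb{R}^{n_x}\times\mathbb{R}^{nn_z}$ as $(\varphi(0),\zeta_n(C\varphi))$ for a cleverly chosen initial condition $\varphi\in\mathcal{C}_{pw}(-h,0;\mathbb{R}^{n_x})$, and then to decode $v^\top\Phi_n^+ v$ from the complete Lyapunov--Krasovskii functional $\mathcal{V}$, whose strict positive definiteness is granted by Lemma~\ref{lem:CN2n}. The first step is to build a \emph{polynomial} test function: set $\beta=\tfrac{1}{h}\mathcal{I}_n v_1$ and take $\varphi(\theta)=C^\top(CC^\top)^{-1}\ell_n^\top((\theta+h)/h)\beta$ for $\theta\in[-h,0)$ together with $\varphi(0)=v_0$. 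Since $C$ has full row rank this is well defined in $\mathcal{C}_{pw}$, and by construction $\varphi(0)=v_0$, $\zeta_n(C\varphi)=v_1$, and crucially $C\varphi$ is a polynomial of degree at most $n-1$ on $[-h,0)$.

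This polynomial property triggers two key simplifications. The equality case of the Bessel--Legendre inequality (Lemma~\ref{lem:Bessel}) saturates the $W_3$-contribution, giving $\int_{-h}^0(C\varphi)^\top W_3(C\varphi)\dth=\tfrac{1}{h}v_1^\top(\mathcal{I}_n^1\otimes W_3)v_1$, and the Legendre residual $\tilde{U}_{1,n}$ is $L^2$-orthogonal to $C\varphi$ on $[-h,0]$ by construction of the projection $\mathbf{U}_{1,n}$, so $\int_{-h}^0\tilde{U}_{1,n}(\theta)C\varphi(\theta)\dth=0$. Substituting $\varphi$ into the decomposition~\eqref{eq:V2} of $\mathcal{V}$ appearing in the proof of Lemma~\ref{lem:Vn}, these simplifications collapse the identity to
\begin{equation*}
v^\top\Phi_n^+v \;=\; \mathcal{V}(\varphi)-I_{W_2}(C\varphi)-Q_n,
\end{equation*}
where $I_{W_2}(C\varphi)=\int_{-h}^0(\theta+h)(C\varphi)^\top W_2(C\varphi)\dth\geq 0$ and $Q_n=\int_{-h}^0\!\int_{-h}^0\varphi^\top C^\top\tilde{U}_{2,n}(\theta_2-\theta_1)C\varphi\,\dth_1\dth_2$ is a residual double-integral error. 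Invoking Lemma~\ref{lem:CN2n} on the right-hand side then yields
\begin{equation*}
v^\top\Phi_n^+v\;\geq\;\varepsilon\bigl(|v_0|^2+\textstyle\int_{-h}^0|C\varphi|^2\dth\bigr)-Q_n.
\end{equation*}

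The technical heart of the proof is controlling $Q_n$: unlike its single-integral counterpart, $Q_n$ does \emph{not} vanish for polynomial $C\varphi$, because the one-variable $L^2$-orthogonality of $\tilde{U}_{2,n}$ on $[-h,h]$ does not translate into two-variable orthogonality against the tensor basis on $[-h,0]^2$. The crudest bound combines Cauchy--Schwarz with the uniform estimate of Lemma~\ref{lem:convergence}(iii) and gives $|Q_n|\leq h\bar{u}_{3,n}\int_{-h}^0|C\varphi|^2\dth$, whence $v^\top\Phi_n^+v\geq\varepsilon|v_0|^2+(\varepsilon-h\bar{u}_{3,n})\int_{-h}^0|C\varphi|^2\dth$. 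Positivity then holds for every $v\neq 0$ as soon as $h\bar{u}_{3,n}<\varepsilon$, which is automatic for large $n$; covering every $n\in\mathbb{N}^\ast$ uniformly is the main obstacle and will require a finer argument exploiting the structure of $\mathbf{T}_n$ -- for instance the symmetry of $\tilde{U}_{2,n}$ recorded in Property~\ref{Prop:U2nSym}, or a tuning of the free parameters $(W_1,W_2,W_3)$ that enlarges $\varepsilon$ relative to $h\bar{u}_{3,n}$.
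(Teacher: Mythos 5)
Your overall strategy is the paper's: realize an arbitrary pair $(\varphi_0,\varphi_n)$ through a test function that is polynomial of degree $n-1$ on $(-h,0)$, invoke Lemma~\ref{lem:CN2n}, and convert $\mathcal{V}(\varphi)$ into the quadratic form $v^\top\Phi_n^+v$ via the orthogonality of $\tilde U_{1,n}$ and the equality case of the Bessel--Legendre inequality. The divergence is in the treatment of the double-integral term. The paper's identity \eqref{eq:V3} carries no residual: once the test function is substituted, the matrix multiplying $\varphi_n$ quadratically is the Gram matrix $\frac{\mathcal{I}_n}{h}\bigl(\iint\ell_n\,B^\top U(\theta_2-\theta_1)B\,\ell_n^\top\bigr)\frac{\mathcal{I}_n}{h}$ of the \emph{exact} kernel, which \eqref{eq:V3} identifies with $\mathbf{T}_n$, so that $\mathcal{V}(\varphi)-\int_{-h}^0(\theta+h)\varphi^\top C^\top W_2C\varphi\,\dth$ equals $v^\top\Phi_n^+v$ exactly and positivity follows for every $n$ directly from Lemma~\ref{lem:CN2n}. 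You instead keep the $\mathbf{T}_n$ of \eqref{lem:VnDef:Pn}, built from the truncated kernel $\mathcal{U}_{2,n}$, and are left with the residual $Q_n$, which is precisely the quadratic form in $\varphi_n$ associated with the difference of the two Gram matrices. Your remark that one-variable orthogonality of $\tilde U_{2,n}$ on $[-h,h]$ does not force $Q_n=0$ against the tensor basis on $[-h,0]^2$ is a fair and non-trivial observation; it is exactly the point where the two arguments part ways, and it should be settled (by comparing the two Gram matrices) rather than estimated.

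The genuine gap is in the conclusion. The lemma asserts $\Phi_n^+\succ 0$ for \emph{every} $n$ in $\mathbb{N}^\ast$, and that universal validity is what Theorem~\ref{thm:CN1} uses (the order threshold there comes only from the $\Phi_n^-$ side); your argument delivers it only when $h\bar u_{3,n}<\varepsilon$, i.e.\ for $n$ beyond an uncontrolled threshold, and you concede as much. The escape routes you sketch do not close this: the $\varepsilon$ produced by Lemma~\ref{lem:CN2n} and the bound $\bar u_{3,n}$ both scale linearly in $(W_1,W_2,W_3)$ (the latter through $\norm{W}$), so no rescaling or rebalancing of the free matrices separates them, and Property~\ref{Prop:U2nSym} only gives symmetry of the residual, not smallness. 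The clean repair is structural rather than analytic: $Q_n$ is itself a quadratic form $\varphi_n^\top\Delta_n\varphi_n$ in the free coordinate $\varphi_n$ and should be absorbed into $\Pn$ --- equivalently, one works with the exact-kernel Gram matrix throughout, as \eqref{eq:V3} does --- after which no smallness condition is required and the conclusion holds at every order.
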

\begin{proof}
Assume that system~\eqref{eq:tds} is GES. According to Lemma~\ref{lem:CN2n}, the complete functional $\mathcal{V}$ in~\eqref{eq:V0} verifies inequality~\eqref{eq:Vnineq1} for a sufficiently small scalar $\varepsilon>0$ for all $\varphi$ in $\mathcal{C}_{pw}(-h,0;\mathbb{R}^{n_x})$.
In particular, consider $\varphi$ as a function expressed as follows
\begin{equation}\label{eq:varphi}
\varphi(\theta) =
\begin{bsmallmatrix}
\displaystyle\delta_0(\theta)I_{n_x}&
\displaystyle \frac{1}{h} C^{\dagger} f_n^\top(\theta) \mathcal{I}_n&\displaystyle
\delta_{-h}(\theta)C^{\dagger}
\end{bsmallmatrix}
\begin{bsmallmatrix}
    \varphi_0\\ 
 \varphi_n\\
\varphi_h\end{bsmallmatrix},
\end{equation}
with $C^{\dagger}=C^\top(CC^\top)^{-1}$ in $\mathbb R^{n_x\times n_z}$ is the right pseudo-inverse of $C$ and  with 
$$
f_n(\theta)=\ell_n(\theta)-\ell_n(1)\delta_0(\theta)-\ell_n(0)\delta_{-h}(\theta).
$$
In this formulation, vector $\begin{bsmallmatrix} 
\varphi_0\\ 
\varphi_n\\
\varphi_h\end{bsmallmatrix}$ in $\mathbb{R}^{n_x+n_z(n+1)}$ is arbitrary and $\delta_{\theta_0}$ is zero everywhere except at $\theta_0$, where it equals $1$, i.e. 
\begin{equation*}
    \delta_{\theta_0}(\theta)=\left\{\begin{array}{ll}1& \text{ for } \theta=\theta_0,\\0 & \text{ otherwise}.\end{array}\right.
\end{equation*} 
Note that such a function $\varphi$ has been selected so that $\varphi$ at the boundary of $[-h,0]$ is given by $\varphi(0)= \varphi_0$, $\varphi(-h)= C^\dagger \varphi_{h}$, and in the interval $(-h,0)$ is given by $\varphi(\theta)= \frac{1}{h}C^\dagger \ell_n(\theta)\mathcal{I}_n \varphi_n$, for all $\theta$ in $(-h,0)$., which is a polynomial function of $\theta$ of degree $n-1$.\newline
Re-injecting the particular function $\varphi$ into the definition of $\mathcal{V}$ in~\eqref{eq:V0}, the orthogonality of the Legendre polynomials decomposition~\eqref{eq:Untilde0} ensures that
\begin{equation}\label{eq:V3}
\begin{array}{l}
    \mathcal{V}(\varphi) \!=\! \varphi_0 U(0)\varphi_0 \!+\! \displaystyle 2\varphi_0^\top\!\!\underbrace{\int_{-h}^0\!\!\!\! U(\theta\!+\!h)B \ell_n^\top\!\left(\!\frac{\theta\!+\!h}{h}\!\right)\! \dth \frac{\mathcal{I}_n}{h}}_{\mathbf{U}_{1,n}} \varphi_n \\
    \!\! +\displaystyle\varphi_n^\top\!\underbrace{\frac{\mathcal{I}_n}{h}\! \!\!\!\overset{\ 0\ \ 0}{\underset{-h\ -h}{\iint}}\!\!\!\ell_n\!\!\left(\!\frac{\theta_1\!+\!h}{h}\!\right)\! B^\top U(\theta_2\!-\!\theta_1)B\ell_n^\top\!\!\left(\!\frac{\theta_2\!+\!h}{h}\!\right)\!\dth_{\!1}\dth_{\!2}\!\frac{\mathcal{I}_n}{h}}_{\mathbf{T}_{n}}\!\varphi_n\\
 \displaystyle+ \int_{-h}^0x^\top _t(\theta) C^\top(W_2+(\theta+h)W_3)Cx_t(\theta)\dth
    = \mathcal{V}_n(\varphi),
\end{array}
\end{equation}
where we recognize the functional $\mathcal{V}_n$ built with matrices $(\Pn,S,R)$ in~\eqref{lem:VnDef:Pn}. Hence, applying the Bessel-Legendre equality case~\eqref{lem:eqBessel} in Lemma~\ref{lem:Bessel} leads to
\begin{equation}\label{eq:v-}
 \mathcal{V}(\varphi) \!- \!\int_{-h}^0\!\!\!\varphi^\top\!(\theta) C^\top\!(\theta\!+\!h)W_2C\varphi(\theta)\dth\! =\! \begin{bsmallmatrix}
    \varphi_0\\ 
 \varphi_n\\
\end{bsmallmatrix}^\top \!\Phi_n^+ \begin{bsmallmatrix}
    \varphi_0\\ 
 \varphi_n\\
\end{bsmallmatrix}.
\end{equation}
Moreover, re-injecting \eqref{eq:varphi} into the inequality~\eqref{eq:Vnineq1} yields
\begin{equation}\label{ineq:v-}\begin{array}{r}
 \displaystyle\mathcal{V}(\varphi) \!- \!\int_{-h}^0\!\!\!\varphi^\top\!(\theta) C^\top\!(\theta\!+\!h)W_2C\varphi(\theta)\dth\qquad \qquad \\
 \!\geq\! \eps \left(\norm{\varphi_0}^2 \!\!+\! \varphi_n^\top\mathcal{I}_n\varphi_n\right) \!\geq\! \eps \norm{\begin{smallmatrix}\varphi_0\\ \varphi_n\end{smallmatrix}}^2\!\!.
\end{array}\end{equation}
Altogether, \eqref{eq:v-} and \eqref{ineq:v-} ensure  $\begin{bsmallmatrix}
\varphi_0\\ 
\varphi_n\\
\end{bsmallmatrix}^{\!\top}\! \Phi_n^+ \begin{bsmallmatrix}
    \varphi_0\\ 
 \varphi_n\\
\end{bsmallmatrix}
\!\geq\! \eps\left|\begin{smallmatrix}
    \varphi_0\\ 
 \varphi_n\\
\end{smallmatrix}\right|^2$, 
 for any vector $\left[\begin{smallmatrix}
    \varphi_0\\ 
 \varphi_n\end{smallmatrix} \right]$ in $\mathbb R^{n_x+nn_z}$, which guarantees the positive definiteness of matrix $\Phi_n^+$.
\end{proof}

\subsection{Existence of an order for having LMI condition~\eqref{eq:CSlmi2}}

As an extension to Lemma~\ref{lem:Vn}, the next developments use convergence arguments to asymptotically ensure the negative definiteness of the time-derivative of $\mathcal{V}_n$ along the trajectories of system~\eqref{eq:tds}. The following lemma guarantees the satisfaction of LMI~\eqref{eq:CSlmi2}, for sufficiently large orders.
\begin{lemma}\label{cor:CN2}
There exists an order $N^\ast$ in $\mathbb{N}^\ast$ such that LMI condition $\Phi_{n}^-\prec 0$ holds with matrices $(\Pn,S,R)$ given by~\eqref{lem:VnDef:Pn} for all $n \geq N^\ast$.
\end{lemma}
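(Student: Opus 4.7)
The plan is to use two expressions for $\dot{\mathcal V}_n$ associated with the matrices $(\Pn,R,S)$ of~\eqref{lem:VnDef:Pn}: the analytic expression of Lemma~\ref{lem:Vn} in terms of $\Psi_n(\theta)$, whose deviation from the block-diagonal matrix $\diag(W_1,\frac{1}{h}W_2,W_3)$ is built only from the three approximation errors $\tilde U_{1,n}$, $\tilde U'_{1,n}$ and $\tilde U_{2,n}$; and the Bessel-Legendre upper bound derived in the proof of Theorem~\ref{thm:CSlmi}, which yields the quadratic form attached to $\Phi_n^-$. These two expressions are in general related by an inequality, but they will coincide once evaluated on the piecewise polynomial test functions of~\eqref{eq:varphi} already used in Lemma~\ref{cor:CN1}.

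The first step is to establish the uniform positive definiteness of $\Psi_n(\theta)$ on $[-h,0]$ for $n$ large enough. A direct inspection of~\eqref{eq:psin} shows that every perturbation block is a linear combination of $\tilde U_{1,n}$, $\tilde U'_{1,n}$ and $\tilde U_{2,n}$, evaluated at $\theta$, $\theta+h$, $0$ or $-h$, with coefficients depending only on $A$, $B$, $C$. The discontinuous derivative $\tilde U'_{1,n}$ only appears inside the integrated block $\Psi_n^{1}(\theta)$, so a pointwise bound on $[-h,0)$ is sufficient, which is exactly what item (ii) of Lemma~\ref{lem:convergence} provides. Combining this with Corollary~\ref{cor:bounds} and the strict positivity $\diag(W_1,\frac{1}{h}W_2,W_3)\succ 0$ produces an integer $N^\ast$ such that $\Psi_n(\theta)\succ 0$ uniformly in $\theta\in[-h,0]$ for every $n\geq N^\ast$.

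The second step is to realize every augmented vector by such a test function. Given any non-zero triple $(\varphi_0,\varphi_n,\varphi_h) \in \mathbb R^{n_x} \times \mathbb R^{nn_z} \times \mathbb R^{n_z}$, let $\varphi$ be built through~\eqref{eq:varphi}, so that $\varphi(0)=\varphi_0$, $C\varphi(-h)=\varphi_h$, $\zeta_n(C\varphi)=\varphi_n$, and $C\varphi$ is a polynomial of degree $n-1$ on $(-h,0)$. On the one hand, Lemma~\ref{lem:Vn} gives
\begin{equation*}
\dot{\mathcal V}_n(\varphi) = -\frac{1}{h}\!\int_{-h}^0\!
\begin{bsmallmatrix}\varphi_0 \\ h\,C\varphi(\theta)\\ \varphi_h\end{bsmallmatrix}^{\!\!\top}\!
\Psi_n(\theta)
\begin{bsmallmatrix}\varphi_0\\ h\,C\varphi(\theta)\\ \varphi_h\end{bsmallmatrix}\dth,
\end{equation*}
which is strictly negative for $n\geq N^\ast$: the integrand is non-negative in $\theta$ and vanishes identically only if $\varphi_0=\varphi_h=0$ and $C\varphi\equiv 0$, forcing $\varphi_n=0$. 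On the other hand, since $C\varphi$ is polynomial of degree $n-1$, the Bessel-Legendre equality case~\eqref{lem:eqBessel} applied to the integral term of~\eqref{eq:dotV} turns the upper bound of the proof of Theorem~\ref{thm:CSlmi} into an exact identity, so that $\dot{\mathcal V}_n(\varphi) = [\varphi_0^\top\ \varphi_n^\top\ \varphi_h^\top]\,\Phi_n^-\,[\varphi_0^\top\ \varphi_n^\top\ \varphi_h^\top]^\top$. Combining both identities shows that $\Phi_n^-$ is strictly negative on every non-zero augmented vector, hence $\Phi_n^-\prec 0$ for all $n\geq N^\ast$.

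The main obstacle is in the first step: verifying carefully that the perturbation of $\Psi_n(\theta)$ from its diagonal limit admits a uniform-in-$\theta$ bound that vanishes as $n\to\infty$, despite $\tilde U'_{1,n}$ being defined and bounded only on the open interval $[-h,0)$. Once this uniform positivity is secured, the remainder of the argument follows cleanly from the equality case of the Bessel-Legendre inequality on polynomials of degree $n-1$ and from the fact that the test functions of~\eqref{eq:varphi} parameterize every augmented vector of $\mathbb R^{n_x+nn_z+n_z}$.
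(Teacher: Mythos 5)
Your proposal is correct and follows essentially the same two-step route as the paper: first use the uniform convergence bounds of Lemma~\ref{lem:convergence} and Corollary~\ref{cor:bounds} to make the perturbation of $\Psi_n(\theta)$ around $\diag(W_1,\frac{1}{h}W_2,W_3)$ small enough that $\Psi_n(\theta)\succ 0$ uniformly for $n\geq N^\ast$, then evaluate $\dot{\mathcal V}_n$ on the test functions of~\eqref{eq:varphi} and invoke the Bessel--Legendre equality case to identify the result with the quadratic form of $\Phi_n^-$. The only cosmetic difference is that the paper extracts explicit margins $\mu_{1,n},\mu_{2,n},\mu_{3,n}$ and a uniform $\eps$ (later reused for the estimate of $N^\ast$ in Theorem~\ref{thm:CN2}), whereas you argue strict negativity on each non-zero augmented vector, which is equivalent for the symmetric matrix $\Phi_n^-$.
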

\begin{proof}
As a first step to the proof, let emphasize the structure of matrix $\Psi_n$ in~\eqref{eq:psin}. Such a matrix can be decomposed as the sum of a block diagonal positive definite matrices that is independent of $n$ and of a matrix whose entries are all expressed using the approximation errors $\tilde{U}_{n}$ and $\tilde{U}^\prime_{n}$, which can be made uniformly arbitrarily small in light of convergence properties in Lemma~\ref{lem:convergence}.\\
Therefore, we will use the following equivalence results. For any matrix $X$ in $\mathbb R^{p\times q}$ such that $X^\top X\preceq |X|^2 I_q$, then inequality $\begin{bsmallmatrix}
|X| I_p& X\\
X^\top & |X| I_q
\end{bsmallmatrix}\succeq0$ holds. Using this inequality, the following lower bounds of $\Psi_n$ is derived
\begin{equation*}
\Psi_n(\theta)\succeq \begin{bsmallmatrix}
    \mu_{1,n} I_{n_x} & 0 & 0\\
    0 & \mu_{2,n} I_{n_z} & 0\\
    0 & 0 & \mu_{3,n} I_{n_z}
    \end{bsmallmatrix},
\end{equation*}
with 
\begin{equation}\label{cor:defmu}
\begin{array}{rcl}
\mu_{1,n} \!&=&\! \sigmaD{W_1} \!-\!(1\!+\!|A|\!+\!2|C|)\bar{u}_{1,n}\!-\!\bar{u}_{2,n}\!-\!|C|\bar{u}_{3,n},\\ 
\mu_{2,n}  \!&=&\! \frac{1}{h}\sigmaD{W_2} \!-\!(|A|\!+\!\norm{B})\bar{u}_{1,n}\!-\!\bar{u}_{2,n}\!-\!(1\!+\!|C|)\bar{u}_{3,n},\\
\mu_{3,n} \!&=&\! \sigmaD{W_3} \!-\!(1\!+\!\norm{B})\bar{u}_{1,n}\!-\!\bar{u}_{3,n},
\end{array}
\end{equation}
where $\bar{u}_{1,n}$, $\bar{u}_{2,n}$ and $\bar{u}_{3,n}$ are the upper bounds of the approximation errors of the polynomial approximation given by~\eqref{eq:Wang},~\eqref{eq:Wang2} and~\eqref{eq:Wangbar}, respectively.\\
Then, Corollary~\ref{cor:bounds} ensures that all the negative terms can be made arbitrarily small as $n$ increases, while the first terms are positive and independent of $n$. In particular, there exist $N^\ast$ in $\mathbb{N}^\ast$ and a positive scalar $\eps=\mathrm{min}(\mu_{1,n},h\mu_{2,n},\mu_{3,n})>0$ such that the functional $\mathcal{V}_n$ associated to matrices $(\Pn,S,R)$ given by~\eqref{thm1:defV} satisfies
\begin{equation}\label{eq:Vnineq2}
    \dot{\mathcal{V}}_n(\varphi) \leq - \displaystyle\!\varepsilon \left(\norm{\varphi(0)}^2 \!+\!\int_{-h}^0|C\varphi(\theta)|^2\dth\!+\!\norm{C\varphi(-h)}^2\right)\!,
\end{equation}
for all $\varphi$ in $\mathcal{C}_{pw}(-h,0;\mathbb{R}^{n_x})$ and $n\geq N^\ast$.\newline
As a second step, we show that this inequality leads to the satisfaction of LMI condition~\eqref{eq:CSlmi2}. Indeed, consider $\varphi$ as the function expressed in~\eqref{eq:varphi}, which is a polynomial of degree $n-1$ over the interval $(-h,0)$.
Re-injecting expression~\eqref{eq:varphi} into the definition the time-derivative of the functional $\mathcal{V}_n$ in \eqref{eq:dotV} yields
\begin{equation}
 \dot{\mathcal{V}}_n(\varphi)=\begin{bsmallmatrix}
    \varphi_0\\ 
 \varphi_n\\
\varphi_h\end{bsmallmatrix}^\top\! \Phi_n^- \begin{bsmallmatrix}
    \varphi_0\\ 
 \varphi_n\\
\varphi_h\end{bsmallmatrix},
\end{equation}
where we have used the Bessel-Legendre equality case~\eqref{lem:eqBessel} of Lemma~\ref{lem:Bessel}, since $\varphi$ is a polynomial of degree $n-1$ over $(-h,0)$.
Similarly, re-injecting \eqref{eq:varphi} into the right-hand part of inequality~\eqref{eq:Vnineq2} leads to
\begin{equation}
    \dot{\mathcal{V}}_n(\varphi) \!\leq\! -\eps\! \left(\!\norm{\varphi_0}^2\!+\! \varphi_n^\top\mathcal{I}_n\varphi_n\!+\! \norm{\varphi_h}^2\!\right) \!\leq\! -\eps \norm{\begin{smallmatrix}\varphi_0\\ \varphi_n\\\varphi_h\end{smallmatrix}}^2.
\end{equation}
Hence, the following inequality holds 
\begin{equation*}
 \begin{bsmallmatrix}
    \varphi_0\\ 
 \varphi_n\\
\varphi_h\end{bsmallmatrix}^\top \Phi_n^- \begin{bsmallmatrix}
    \varphi_0\\ 
 \varphi_n\\
\varphi_h\end{bsmallmatrix}
\leq  
-\eps\left|\begin{smallmatrix}
    \varphi_0\\ 
 \varphi_n\\
  \varphi_h\\
\end{smallmatrix}\right|^2, \forall\begin{bsmallmatrix}
    \varphi_0\\ 
 \varphi_n\\
\varphi_h\end{bsmallmatrix}\in\mathbb R^{n_x+(n+1)n_z},
\end{equation*}
which ensures that matrix $\Phi_n^-$ is negative definite and concludes the proof.
\end{proof}
\vspace{-0.3cm}

\subsection{Necessary LMI conditions}

\vspace{-0.3cm}
The final step consists in demonstrating that the previous corollaries impose the satisfaction of the LMI in Theorem~\ref{thm:CSlmi}, which corresponds to the following statement.

\begin{theorem}\label{thm:CN1}
If the trivial solution of system~\eqref{eq:tds} is~GES, then then there exist an order ${N^\ast}$ in $\mathbb{N}^{\ast}$ and matrices $(\mathbf{P}_{N^\ast},R,S)$ in $\mathbb{S}^{n_x+{N^\ast}n_z}\times\mathbb{S}^{n_z}_+\times\mathbb{S}^{n_z}_+$ such that inequalities $\Phi_{N^\ast}^+ \succ 0$ and $\Phi_{N^\ast}^- \prec 0$ in~\eqref{eq:CSlmi} hold.
\end{theorem}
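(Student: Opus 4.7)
The plan is essentially to harvest the two preceding lemmas, since all the heavy lifting is already done. The key observation is that both Lemma~\ref{cor:CN1} and Lemma~\ref{cor:CN2} are formulated for the \emph{same} family of matrices $(\mathbf{P}_n, R, S)$ constructed in Lemma~\ref{lem:Vn}, namely~\eqref{lem:VnDef:Pn}, which depend on an arbitrary choice of positive definite $(W_1, W_2, W_3)$. So, once the GES assumption is made, I would fix any $(W_1, W_2, W_3)$ in $\mathbb{S}^{n_x}_+ \times \mathbb{S}^{n_z}_+ \times \mathbb{S}^{n_z}_+$ and invoke the construction of the Lyapunov matrix $U$ via Definition~\ref{defn:U}, which exists and is unique under GES (see Remark~\ref{rem:lyapcond}).

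Next, I would define $(\mathbf{P}_n, R, S)$ as in~\eqref{lem:VnDef:Pn} for every $n \in \mathbb{N}^\ast$. By Lemma~\ref{cor:CN1}, the GES assumption directly implies $\Phi_n^+ \succ 0$ for all $n \in \mathbb{N}^\ast$, so the first LMI holds at every order simultaneously. By Lemma~\ref{cor:CN2}, there exists an order $N^\ast \in \mathbb{N}^\ast$ such that $\Phi_n^- \prec 0$ for all $n \geq N^\ast$; this existence is guaranteed because the estimates $\bar{u}_{1,n}, \bar{u}_{2,n}, \bar{u}_{3,n}$ of Lemma~\ref{lem:convergence} decay to zero uniformly as $n \to \infty$ (Corollary~\ref{cor:bounds}), which makes the diagonal lower bound of $\Psi_n$ eventually positive definite thanks to the positive definiteness of $W_1, W_2, W_3$.

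Setting $n = N^\ast$ in both results then furnishes the triple $(\mathbf{P}_{N^\ast}, R, S)$ in $\mathbb{S}^{n_x + N^\ast n_z} \times \mathbb{S}^{n_z}_+ \times \mathbb{S}^{n_z}_+$ satisfying simultaneously $\Phi_{N^\ast}^+ \succ 0$ and $\Phi_{N^\ast}^- \prec 0$, which is exactly~\eqref{eq:CSlmi} at order $N^\ast$. No genuinely new estimate is needed at this stage; the only subtlety is verifying that the two lemmas yield compatible conclusions for the same matrices, but this is immediate from the fact that both are stated with the same constructive choice~\eqref{lem:VnDef:Pn}. The main obstacle was really already absorbed in Lemma~\ref{cor:CN2}, where showing that the approximation errors $\tilde{U}_{1,n}, \tilde{U}_{1,n}', \tilde{U}_{2,n}$ become uniformly negligible relative to the positive diagonal contributions $\sigmaD{W_1}, \frac{1}{h}\sigmaD{W_2}, \sigmaD{W_3}$ required the regularity analysis of the Lyapunov matrix of Lemma~\ref{prop:DeltaU} and the explicit bounds in Lemma~\ref{lem:convergence}. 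The proof therefore reduces to a short synthesis paragraph.
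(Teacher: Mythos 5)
Your proof is correct and follows exactly the same route as the paper: fix $(W_1,W_2,W_3)$, build $(\mathbf{P}_n,R,S)$ via~\eqref{lem:VnDef:Pn}, invoke Lemma~\ref{cor:CN1} for $\Phi_n^+\succ 0$ at every order and Lemma~\ref{cor:CN2} for $\Phi_n^-\prec 0$ at some order $N^\ast$, then take $n=N^\ast$. The paper's proof is the same short synthesis of the two preceding lemmas, so nothing is missing.
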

\begin{proof}
Consider the functional $\mathcal V_n$ given by~\eqref{thm1:defV} associated to matrices $(\Pn,S,R)$ in $\mathbb{S}^{n_x+n n_z}\times \mathbb{S}^{n_z}_+\times\mathbb{S}^{n_z}_+$ given by~\eqref{lem:VnDef:Pn}. Recall that these matrices have been built for given matrices $(W_1,W_2,W_3)$ in $\mathbb{S}^{n_x}_+\times \mathbb{S}^{n_z}_+\times \mathbb{S}^{n_z}_+$ with the idea to propose an approximated reconstruction of the complete functional $\mathcal{V}$. We have then shown in Lemma~\ref{cor:CN1} that having system~\eqref{eq:tds} GES guarantees that $\Phi_{n}^+\succ 0$ holds, for any order $n$ in $\mathbb{N}^\ast$. Applying Lemma~\ref{cor:CN2}, we have also ensured the existence of an order $N^\ast$ such that $\Phi_{N^\ast}^-\prec 0$ holds. These two results close the proof.
\end{proof}

To sum up, this section has been dedicated to the proof of the converse side of Theorem~2. The next section aims at providing an estimation of the order $N^\ast$ for which this converse statement holds.

\section{Necessity of LMI stability conditions: Estimation of the order $N^\ast$}\label{sec3}

This section aims at providing an estimation of the order $N^\ast$ for which the sufficient LMI conditions of Theorem~\ref{thm:CSlmi} are necessarily true if the system is assumed to be GES.

\begin{theorem}\label{thm:CN2}
If the trivial solution of system~\eqref{eq:tds} is GES, then there exist matrices $(\mathbf{P}_{\!N^\ast},R,S)$ in $\mathbb{S}^{n_x\!+\!{N^\ast}\!n_z}~\times~\mathbb{S}^{n_z}_+~\times~\mathbb{S}^{n_z}_+$ such that inequalities $\Phi_{N^\ast}^+ \succ 0$ and $\Phi_{N^\ast}^- \prec 0$ in~\eqref{eq:CSlmi} hold with 
\begin{equation}\label{eq:Nast}
N^\ast \!\!=\! 5 \!+\! \left\lceil\! \left((\textstyle\frac{4}{1+h^2}\!+\!\norm{A}\!+\!\norm{B})\varrho_1\!+\!\varrho_2\!+\!2\varrho_3\right)^{\!2}\!\left(1\!+\!h^2\right)^2\!\right\rceil\!,
\end{equation}
where parameters $\varrho_1,\varrho_2,\varrho_3$ are defined by~\eqref{eq:r1},~\eqref{eq:r2},~\eqref{eq:r3}, but are recalled here for consistency
\begin{equation*}
\begin{array}{lcl}
    \varrho_1  &=& \sqrt{n_x} \left(\frac{\pi}{2}\right)^\frac{3}{2}e^{h\norm{\mathcal{M}}}\norm{\mathcal{M}^2\mathcal{N}^{-1}}h^2 \norm{B},\\
    \varrho_2  &=& \frac{1}{2}\sqrt{n_x} \left(\frac{\pi}{2}\right)^\frac{3}{2}e^{h\norm{\mathcal{M}}}\norm{\mathcal{M}^4\mathcal{N}^{-1}}h^3\norm{B},\\
    \varrho_3  &=& \sqrt{2\pi} ( 1 +  \sqrt{n_x}\pi h e^{h\norm{\mathcal{M}}}\norm{\mathcal{M}^2\mathcal{N}^{-1}}) h \norm{B}^2.
\end{array}
\end{equation*}
\end{theorem}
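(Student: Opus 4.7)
The plan is to refine Theorem~\ref{thm:CN1} by turning its existential conclusion into an explicit value of $N^\ast$. The skeleton is identical to Section~\ref{sec2}: the matrices $(\Pn,R,S)$ are built from a triple $(W_1,W_2,W_3)$ via~\eqref{lem:VnDef:Pn}, so Lemma~\ref{cor:CN1} already delivers $\Phi_n^+\succ 0$ for every $n\in\mathbb{N}^\ast$. Hence, the whole argument consists in quantifying when the three scalars $\mu_{1,n}$, $\mu_{2,n}$, $\mu_{3,n}$ introduced in~\eqref{cor:defmu} become simultaneously positive, which is exactly what makes $\Phi_n^-\prec 0$.

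First, I substitute the uniform approximation bounds of Lemma~\ref{lem:convergence} into~\eqref{cor:defmu}. Using $\norm{C}=1$ and $1/\sqrt{n-3}\leq 1/\sqrt{n-5}$ for $n\geq 6$, each $\bar u_{i,n}$ can be majorised by $\varrho_i\norm{W}/\sqrt{n-5}$. Every positivity condition $\mu_{i,n}>0$ then rewrites as
\begin{equation*}
w_i^\mathrm{eff}\sqrt{n-5}\geq \norm{W}\,D_i,\qquad i=1,2,3,
\end{equation*}
with $w_1^\mathrm{eff}=\sigmaD{W_1}$, $w_2^\mathrm{eff}=\sigmaD{W_2}/h$, $w_3^\mathrm{eff}=\sigmaD{W_3}$, and
\begin{align*}
D_1 &= (3+\norm{A})\varrho_1+\varrho_2+\varrho_3,\\
D_2 &= (\norm{A}+\norm{B})\varrho_1+\varrho_2+2\varrho_3,\\
D_3 &= (1+\norm{B})\varrho_1+\varrho_3.
\end{align*}

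Second, I restrict to the diagonal choice $W_1=w_1 I_{n_x}$, $W_2=w_2\, h\, I_{n_z}$, $W_3=w_3 I_{n_z}$, with three free positive scalars $w_1,w_2,w_3$. The (non-obvious) scaling of $W_2$ by an extra factor $h$ gives $\norm{W}\leq w_1+h^2 w_2+w_3$ together with $w_i^\mathrm{eff}=w_i$ for $i=1,2,3$. The three conditions then reduce to $w_i\sqrt{n-5}\geq (w_1+h^2 w_2+w_3)D_i$, and are made simultaneously tight by the choice $w_i\propto D_i$, yielding the single scalar inequality
\begin{equation*}
\sqrt{n-5}\geq D_1+h^2 D_2+D_3.
\end{equation*}

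Finally, substituting the explicit expressions of $D_1,D_2,D_3$ yields
\begin{equation*}
D_1+h^2 D_2+D_3=(1+h^2)\left[\left(\tfrac{4}{1+h^2}+\norm{A}+\norm{B}\right)\varrho_1+\varrho_2+2\varrho_3\right],
\end{equation*}
so that squaring, adding $5$ and taking the ceiling returns exactly~\eqref{eq:Nast}, after which Theorem~\ref{thm:CN1} applied to this order concludes. The main obstacle I expect when writing the proof is to spot the anomalous $h$-scaling of $W_2$: it is precisely what transforms the natural $h w_2$ contribution to $\norm{W}$ into the $h^2 w_2$ term that delivers the $1+h^2$ factors of~\eqref{eq:Nast}. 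Once this balancing is in place, the remainder is a direct chain of inequalities combining Lemmas~\ref{cor:CN1}--\ref{cor:CN2}.
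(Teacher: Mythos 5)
Your proposal is correct and follows essentially the same route as the paper: both reduce the problem to making $\mu_{1,n},\mu_{2,n},\mu_{3,n}$ of~\eqref{cor:defmu} positive, majorise each $\bar u_{i,n}$ by $\varrho_i\norm{W}/\sqrt{n-5}$, and balance the three conditions through an $h$-weighted diagonal choice of $(W_1,W_2,W_3)$ (your $w_i\propto D_i$ is exactly the paper's normalisation $\eta_1+\eta_2+\eta_3=1$ with $hW_2=\lambda\eta_2 I_{n_z}$), arriving at the same inequality $\sqrt{n-5}\geq D_1+h^2D_2+D_3$ and hence at~\eqref{eq:Nast}.
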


\begin{proof}
The key step for the estimation of $N^\ast$ appears in the first part of the proof of Lemma~\ref{cor:CN2}, more particularly in equations~\eqref{cor:defmu}, that are recalled here for the sake of readability (with $|C|=1$),
\begin{equation}\label{proof:defmu}
\begin{array}{rclcl}
0\!&<&\!\mu_{1,n} \!&=&\! \sigmaD{W_1} \!-\!(3\!+\!|A|)\bar{u}_{1,n}\!-\!\bar{u}_{2,n}\!-\!\bar{u}_{3,n},\\ 
0\!&<&\!\mu_{2,n} \!&=&\! \frac{1}{h} \sigmaD{W_2} \!-\!(|A|\!+\!\norm{B})\bar{u}_{1,n}\!-\!\bar{u}_{2,n}\!-\!2\bar{u}_{3,n},\\
0\!&<&\!\mu_{3,n}  \!&=&\! \sigmaD{W_3} \!-\!(1\!+\!\norm{B})\bar{u}_{1,n}\!-\!\bar{u}_{3,n}.
\end{array}
\end{equation}
As matrices $(W_1,W_2,W_3)$ are arbitrary, it is possible to select for all $\lambda>0$
\begin{equation}
W_1=\lambda \eta_1 I_{n_x}, \quad hW_2= \lambda\eta_2 I_{n_z},\quad W_3=\lambda\eta_3I_{n_z},
\end{equation}
where $\eta_1$, $\eta_2$ and  $\eta_3>0$ are positive scalars such that $\eta_1+\eta_2+\eta_3=1$. This selection makes that 
$$|W|=|W_1+hW_2+W_3|=\lambda.$$ Therefore, the upper bounds of the approximation errors given in~\eqref{eq:Wang},~\eqref{eq:Wang2} and~\eqref{eq:Wangbar} verify for all $n\geq6$, 
\begin{equation*}
\begin{array}{lcll}
  \bar{u}_{1,n}&=&\displaystyle \frac{\varrho_1}{\sqrt{n-3}} \lambda \leq \frac{\varrho_1}{\sqrt{n-5}} \lambda,\\
   \bar{u}_{2,n}&=&\displaystyle \frac{\varrho_2}{\sqrt{n-5}}\lambda,\\
    \bar{u}_{3,n}&= &\displaystyle \frac{\varrho_3}{\sqrt{n-3}} \lambda \!\leq\! \frac{\varrho_3}{\sqrt{n-5}} \lambda.
\end{array}
\end{equation*}
The objective is now to evaluate the necessary condition obtained in the previous section, more particularly the ones arising from inequalities~\eqref{proof:defmu}. We select then $\eta_1$, $\eta_2$ and $\eta_3$ as follows
\begin{equation*}
\begin{array}{rcl}
    \eta_1 &\geq& \displaystyle \frac{\varrho_1(3+\norm{A}) + \varrho_2 + \varrho_3}{\sqrt{n-5}},\\
    \eta_2 &\geq& \displaystyle \frac{\varrho_1(\norm{A}+\norm{B}) + \varrho_2 + 2\varrho_3}{\sqrt{n-5}}h^2,\\
    \eta_3 &\geq& \displaystyle \frac{\varrho_1(1+\norm{B}) + \varrho_3}{\sqrt{n-5}}.
\end{array}
\end{equation*}
Recalling that $\eta_1+\eta_3+\eta_3=1$, we obtain
\begin{equation*}
\frac{4\varrho_1\!+\!\big((\norm{A}\!+\!\norm{B})\varrho_1\!+\!\varrho_2\!+\!2\varrho_3\big)(1\!+\!h^2)}{\sqrt{n-5}}\leq 1, 
\end{equation*}
which yields condition \eqref{eq:Nast}.
\end{proof}

\begin{remark}\label{rem:Nast}
It is worth noticing that the minimal order $N^\ast$ given by~\eqref{eq:Nast} is an over estimate of the necessary order. Indeed, to get such estimate, several over-bounding approximations have been performed, leading to a conservative estimation. Nevertheless, this is a first solution to the problem of necessary and sufficient LMI conditions for the stability of time-delay systems.  
Several improvements to get a more accurate estimation is let to future works. One of the possible improvements would consider, for instance, more accurate estimation of  $\varrho_1$ and $\varrho_2$ which would greatly reduce the estimation by using the fact that $U$ is infinitely continuous on $[-h,0)$. Another possibility to improve our estimation of $N^\ast$ is to compute the minimal order directly from inequality~\eqref{eq:psin}.
\end{remark}

\section{Numerical results}\label{sec:ex}

The numerical application of Theorems~\ref{thm:CSlmi} and~\ref{thm:CN2} is commented and illustrated on the following academic examples corresponding to \eqref{eq:tds} with
\begin{example}\label{ex1}
 $A=1$ and $A_d=-2$.
\end{example}

\begin{example}\label{ex2}
$A=\begin{bsmallmatrix}0&0\\0&0\end{bsmallmatrix}$ and $A_d=\begin{bsmallmatrix}-1&0.2\\-0.1&0\end{bsmallmatrix}$.
\end{example}

\begin{example}\label{ex3}
$A\!=\!\begin{bsmallmatrix}0&0&1&0\\0&0&0&1\\\!-10-\lambda&10&0&0\\5&-15&0&-\frac{1}{4}\end{bsmallmatrix}$ and $A_d\!=\!\begin{bsmallmatrix}0&0&0&0\\0&0&0&0\\\lambda&0&0&0\\0&0&0&0\end{bsmallmatrix}$.
\end{example}

First, we recall the efficiency of the sufficient stability condition of Theorem~\ref{thm:CSlmi}. For Examples~\ref{ex1} and~\ref{ex2}, Table~\ref{tab:CS} reports the maximal allowable delay, for which LMI~\eqref{eq:CSlmi} at order $n=1,2,3$ are satisfied.  Figure~\ref{fig1} shows the stability region for Example~\ref{ex3} achieved by the same condition for several orders. One can see that in this table and in this figure the efficiency of Theorem~\ref{thm:CSlmi} even for very low orders $n$ to provide an inner approximation of the stability regions.

\begin{table}[!t]
    \caption{Sufficiency: maximal allowable delay $h$ which satisfies~\eqref{eq:CSlmi}.}
    \centering
    \begin{tabular}{|c|c|c|c|c|}
    \hline
        & $n=1$ & $n=2$ & $n=3$ & Expected\\
    \hline
        Ex.~\ref{ex1} & $0.577$ & $0.604$ & $0.604$ & $\frac{\mathrm{atan}\sqrt{3}}{\sqrt{3}}\simeq0.604$ \\
        Ex.~\ref{ex2} & $-$ & $1.600$ & $ 1.603$ & $1.603$\\
    \hline
    \end{tabular}
    \label{tab:CS}
\end{table}

Thanks to Theorem~\ref{thm:CN2}, these LMI conditions of stability are proven to be also necessary. Therefore, the inner approximation mentioned above is supposed to converge toward the expected regions of stability as the order $n$ increases. Table~\ref{tab:CN} reports the estimated order $N^\ast$ for which the LMI conditions become necessary. Similarly, Figure~\ref{fig2} shows the values of $N^\ast$  given by~\eqref{eq:Nast} for various pairs of $(\lambda,h)$ in $[0.1,10]\times [0,3]$. Clearly the values of $N^\ast$ computed here are too large to propose tractable test of instability in comparison to~\cite{Mondie2021}. Nonetheless, Theorem~\ref{thm:CN2} provides a theoretical estimation of $N^\ast$, from which these sufficient LMI conditions become necessary. 

This estimation is a by-product of our main result that has not been optimized in this paper. Some guidelines to improve this estimation have been suggested in Remark~\ref{rem:Nast}.

\begin{table}[!t]
    \centering
    \caption{Necessity: estimated order $N^\ast$ given by~\eqref{eq:Nast}.}
    \begin{tabular}{|c|c|c|c|c|}
    \hline
        & $h=0.1$ & $h=0.5$ & $h=1$ & $h=2$\\
    \hline
        Ex.~\ref{ex1} & $75$ & $10^8$ & $10^{10}$ & $10^{13}$ \\
        Ex.~\ref{ex2} & $6$ & $537$ & $10^7$ & $10^{10}$\\
    \hline
    \end{tabular}
    \label{tab:CN}
\end{table}

Interestingly Figure~\ref{fig2} shows that increasing both $\lambda$ and $h$ makes that $N^\ast$ also increases very fast and reach very large values. Indeed, formula~\eqref{eq:Nast} shows that the order $N^\ast$ grows as the delay $h$ or the norm $\norm{B}$ increase, respectively in $h^8e^{2h\norm{M}}$ and $\norm{B}^4$. It is also worth noticing that when the parameter $\lambda$ and $h$ are getting closer to the black lines, the estimation $N^\ast$ increases even more faster. These black lines correspond to the situation where some roots of the time-delay system cross the imaginary axis. This makes sense because if characteristic roots are approaching the imaginary axis, then matrix $\mathcal{N}$ tends to a singular matrix and $\norm{\mathcal{N}^{-1}}$ tends to infinity. Hence, the upper bounds $\varrho_1,\varrho_2,\varrho_3$ also tend to infinity.

All together, this ascertainment can be correlated with Figure~\ref{fig1} and may explain why some stable regions are difficult to reach with LMI~\eqref{eq:CSlmi}, especially for low orders.

\begin{figure}
    \centering
    \begin{subfigure}{0.99\linewidth}
    \centering
	\includegraphics[width=8cm]{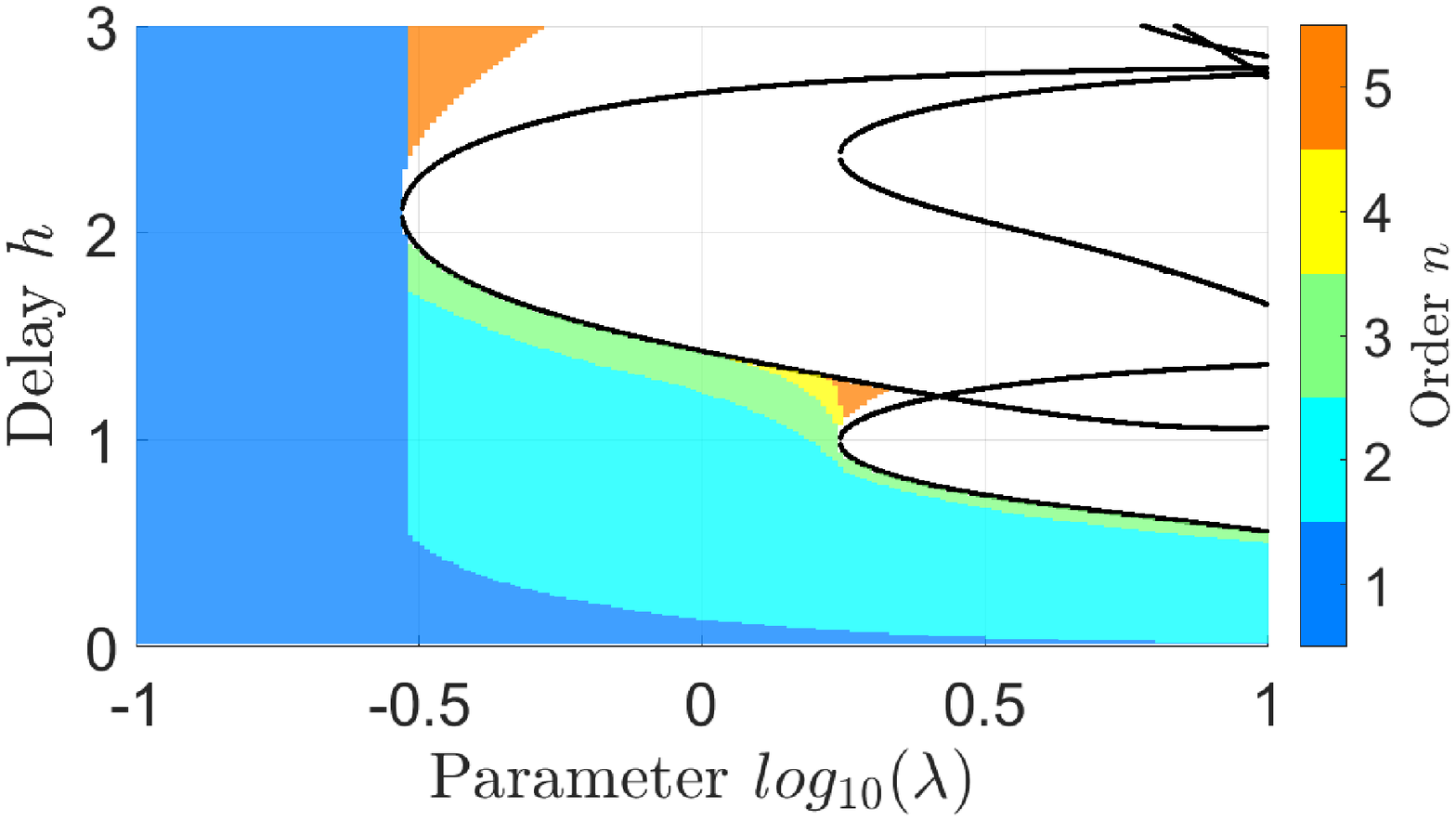}
	\caption{Sufficiency: stability areas in the plan $(\lambda,h)$ given by~\eqref{eq:CSlmi} for $n\in\{1,2,3,4,5\}$.}
	\label{fig1}
	\end{subfigure}\\
	\begin{subfigure}{0.99\linewidth}
    \centering
	\includegraphics[width=8cm]{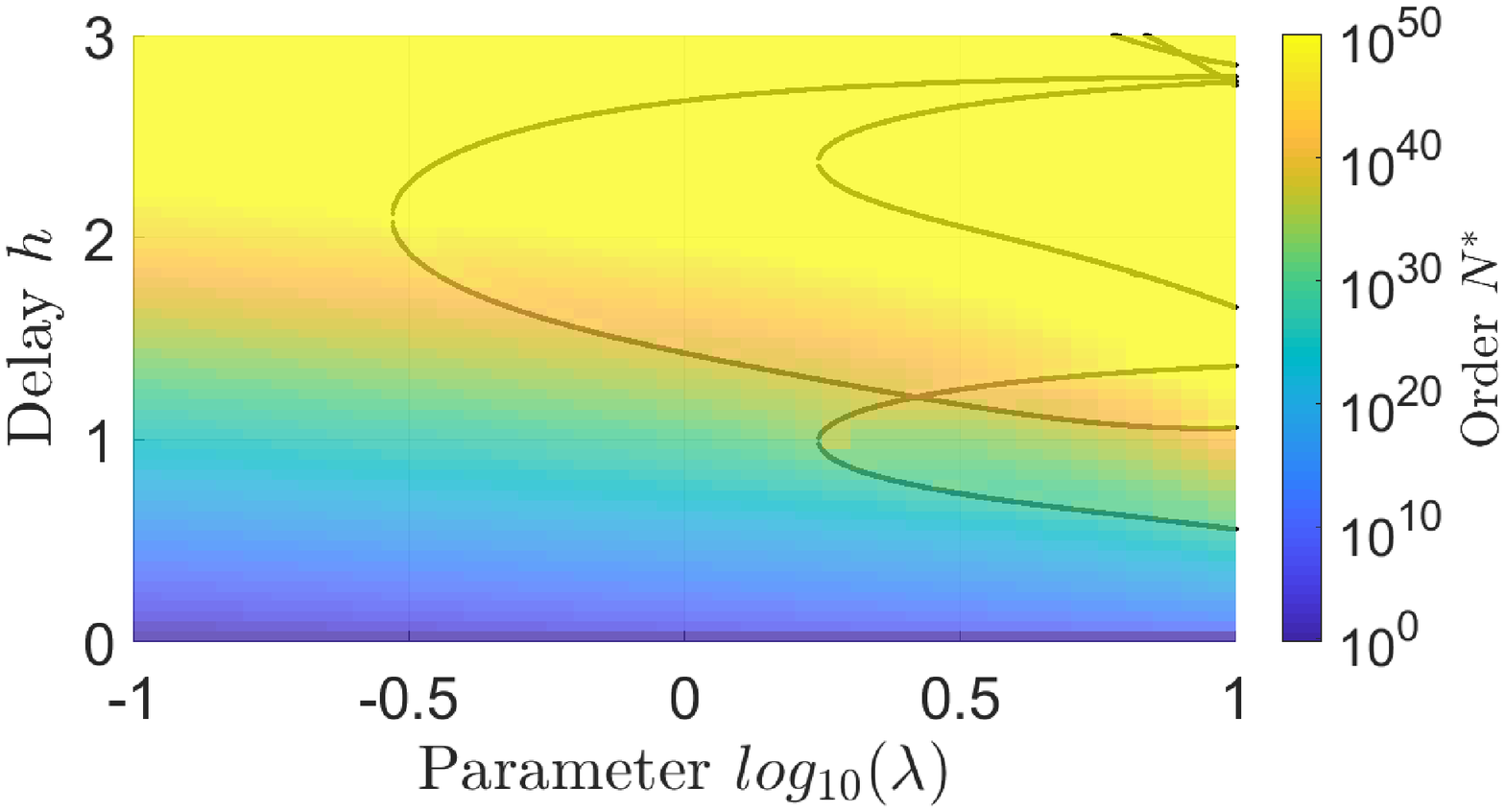}
	\caption{Necessity: order $N^\ast$ in the plan $(\lambda,h)$ given by~\eqref{eq:Nast}.}
	\label{fig2}
	\end{subfigure}
	\caption{Example~\ref{ex3}.}
\end{figure}

\section{Conclusions}

This paper studied the convergence of sufficient LMI conditions for the stability analysis of time-delay systems, based on the Bessel-Legendre inequalities. While the framework of Bessel-Legendre already showed their relevance regarding the hierarchical structure of these LMI, i.e. increasing their order can only reduce their conservatism, the main contribution of this paper demonstrates that this framework also offers an asymptotically necessary condition of stability for time-delay systems. In other words, it is now proven that if a time-delay system is stable, then there exists an order $N^\ast$ such that these LMI conditions are verified at least at this order. A numerical estimation of this necessary order has also been provided, ensuring that if these LMI conditions are not verified at this order, then the system is proven to be unstable. To summarize, these LMI conditions arising from the Bessel-Legendre inequality are sufficient and asymptotically necessary. 

The estimation of the order $N^\ast$ provided in this paper can be numerically very large, even on some simple examples. This can be seen as a major drawback of the contribution. Nevertheless, the main theoretical achievement of the paper remains the proof of convergence of these LMI conditions.\\
Providing more accurate estimations is let to future direction of research. A generalization to other approximation or discretization methods or the investigation of combined procedures could also be considered.

\appendix

\section{Preliminaries on Legendre Polynomials for the technical proofs}\label{app:leg}

In this section, several properties of Legendre polynomials are presented to help in the technical developments of this paper.

\begin{property}\label{prop:legderivative}
The Legendre polynomials verify the following properties
\begin{itemize}
\item[(i)] \textbf{Orthogonality:} The following equality holds
\begin{equation}\label{eq:ortho}
    \int_0^1 \!\!\ell_n(\theta)\ell_n^\top(\theta)\dth = \mathcal{I}_n^{-1} \in\mathbb{R}^{nn_z\times nn_z},
\end{equation}
where matrix $\mathcal{I}_n=\mathcal{I}_n^{n_z}=\mathcal{I}_n^1\otimes I_{n_z}$ is given by~\eqref{eq:In}.
\item[(ii)] \textbf{Point-wise values:} The Legendre polynomials are evaluated point wisely by 
\begin{equation}\label{eq:legeval}
    l_k(0) = (-1)^k,\; l_k(1) = 1,\; l_k'(1)= k(k+1),\; \forall k\in\mathbb{N}.
\end{equation}
\item[(iii)] \textbf{Evenness and oddness:} For any $\theta_1$ and $\theta_2$ in $[-h,0]$ and  $n$ in $\mathbb{N}^{\ast}$, the following equation holds
\begin{equation}\label{eq:evenlk}
\ell_n^\top\!\left(\!\frac{h\!-\!\theta_1}{2h}\!\right)\! \mathcal{I}_n\ell_n\!\left(\!\frac{h\!+\!\theta_2}{2h}\!\right)\!=\!\ell_n^\top\!\left(\!\frac{h\!-\!\theta_2}{2h}\!\right)\!\mathcal{I}_n\ell_n\!\left(\!\frac{h\!+\!\theta_1}{2h}\!\right)\!.
\end{equation}
\item[(iv)] \textbf{Bound:} For any $\theta$ in $[0,1]$, the Legendre polynomials verify the following inequalities
\begin{align}
\norm{l_k(\theta)}&\leq 1,\qquad\qquad\qquad\quad  \forall k\in\mathbb{N}, \label{eq:boundlk}\\
\norm{l_k(\theta)}&\leq\frac{1}{2}\sqrt{\frac{\pi}{2k\theta(1-\theta)}},\quad \ \forall k\in\mathbb{N}^\ast.\label{eq:boundlk2}
\end{align}
\item[(v)] \textbf{Differentiation:} For any $\theta$ in $[0,1]$, the Legendre polynomials verify the following differentiation rule
\begin{equation}\label{eq:derlk}
l_k(\theta)= \frac{1}{2(2k+1)}\left(l_{k+1}^\prime(\theta)-l_{k-1}^\prime(\theta)\right), \; \forall k\in\mathbb{N}^\ast.
\end{equation}
\item[(vi)] \textbf{Bound on the derivative:} For any $\theta$ in $[0,1]$, the Legendre polynomials verify the following inequality
\begin{equation}\label{eq:boundderlk}
\norm{l_k^\prime(\theta)}\leq \norm{l_k^\prime(1)}=k(k+1),\quad \forall k\in\mathbb{N}.
\end{equation}
\item[(vii)] 
\textbf{Bonnet's recursion formula:} For any $\theta$ in $[0,1]$ and $k$ in $\mathbb{N}^\ast$, the Legendre polynomials satisfy
\begin{equation}\label{eq:reclk}
    (k+1) l_{k+1}(\theta) = (2k+1) l_1(\theta) l_k(\theta) - k l_{k-1}(\theta).
\end{equation}
\end{itemize}
\end{property} 

\begin{proof}
The properties of orthogonality~(i), special values~(ii)-(iii) and differentiation~(v) are provided in~\cite[Formula 22.2.11, 22.4.5 and 22.7.15]{Abramowitz1972}, respectively. For the boundedness property, especially~\eqref{eq:boundlk2}, the proof can be found in~\cite[Theorem~61]{Rainville1960}.
\end{proof}

\section{Symmetric property}\label{app2}

In the core of the paper, we have used the following property satisfied by the Legendre approximated error $\tilde{U}_{2,n}$ of function $B^\top UB$ on the interval $[-h,h]$.

\begin{property}\label{Prop:U2nSym}
The approximation error $\tilde{U}_{2,n}$ given by~\eqref{eq:Untilde0} and recalled below
\begin{equation*}
    \tilde{U}_{2,n}(\theta) = B^\top U(\theta) B - \mathbf{U}_{2,n} \ell_n\!\left(\frac{\theta+h}{2h}\right),\, \forall \theta\in[-h,h],
\end{equation*}
verifies 
$\tilde{U}_{2,n}(\theta)=\tilde{U}_{2,n}^\top(-\theta)$, for all $\theta\in[-h,0]$.
\end{property}

\begin{proof}
From the definitions of $\tilde U_{2,n}$ and $\mathbf{U}_{2,n}$, we have
\begin{equation*}
\begin{array}{lcl}
\tilde{U}_{2,n}^\top (-\theta)=  B^\top U^\top(-\theta) B  \\
- \displaystyle \frac{1}{2h}\left(\!\int_{-h}^h \ell_n^\top\!\left(\!\frac{h\!-\!\theta}{2h}\!\right) \mathcal{I}_n\ell_n\!\left(\!\frac{h\!+\!\theta_1}{2h}\!\right) B^\top U^\top(\theta_1)B\dth_1\!\right)\!.
\end{array}
\end{equation*}
Then, property~\eqref{eq:evenlk} of the Legendre polynomials and since $\ell_n^\top\left(\frac{h-\theta_1}{2h}\right) \mathcal{I}_n\ell_n\!\left(\frac{h+\theta}{2h}\right)$ is an $n_z\times n_z$ matrix that is proportional to $I_{n_z}$, it commutes with $B^\top U^\top(\theta_1)B$, so that the previous expression writes
$$\begin{array}{lcl}
\tilde{U}_{2,n}^\top (-\theta)=  B^\top U^\top(-\theta) B  \\
- \displaystyle \frac{1}{2h}\left(\!\int_{-h}^h  B^\top U^\top(\theta_1)B \ell_n^\top\!\left(\!\frac{h\!-\!\theta_1}{2h}\!\right) \mathcal{I}_n\ell_n\!\left(\!\frac{h\!+\!\theta}{2h}\!\right)\dth_1\!\right)\!.
\end{array}
$$
Recalling that $U^\top(-\theta) =U(\theta)$ holds for all $\theta$ in $[-h,h]$ and performing the  change of variable $\theta_2=-\theta_1$, the previous expression becomes
$$\begin{array}{lcl}
\tilde{U}_{2,n}^\top (-\theta)=  B^\top U(\theta) B  \\
- \displaystyle \frac{1}{2h}\left(\int_{-h}^h  B^\top U(\theta_2)B \ell_n^\top\!\left(\frac{h\!+\!\theta_2}{2h}\right)\dth_2\right)  \mathcal{I}_n\ell_n\!\left(\frac{h\!+\!\theta}{2h}\right),
\end{array}
$$
which is the definition of $\tilde{U}_{2,n}(\theta)$.
\end{proof}

\section{Proof of Lemma~\ref{lem:Bessel}}\label{app0}

\begin{proof}
Consider a function $z$ in $\mathcal{L}_2(-h,0;\mathbb{R}^{n_z})$, a matrix $S$ in $\mathbb{S}^{n_z}$ and $h > 0$. Define the function $\tilde{z}_n$ by
\begin{equation}\label{eq:tildezn}
    \tilde{z}_n(\theta) \!=\! z(\theta) - \frac{1}{h} \ell_n^\top\left(\frac{\theta+h}{h}\right)\mathcal{I}_n\zeta_n(z),
\end{equation}
where $\zeta_n(z) = \int_{-h}^0\!\!\ell_n\left(\frac{\theta_1+h}{h}\right)z(\theta_1)\dth_1$.
The function $\tilde{z}_n$ in $\mathcal{L}_2(-h,0;\mathbb{R}^{n_z})$ represents the Legendre approximation error of function $z$ at order $n$.
The quantity $\int_{-h}^0\tilde{z}_n^\top(\theta) S \tilde{z}_n(\theta) \mathrm{d}\theta$ exists and the orthogonal property of the Legendre polynomials~\eqref{eq:In} yields
\begin{equation*}
    \begin{aligned}
        \int_{-h}^0\tilde{z}_n^\top(\theta) S \tilde{z}_n(\theta) \mathrm{d}\theta = &\int_{-h}^0 z^\top(\theta) S z(\theta) \mathrm{d}\theta \\
        &-\frac{1}{h}\zeta_n^\top(z) (\mathcal{I}_n^1\!\otimes\! S) \zeta_n(z),
    \end{aligned}
\end{equation*}
Clearly, if $z$ is a polynomial of order $n-1$, then $\tilde{z}_n(\theta)=0$ for all $\theta\in(-h,0)$ so that $\int_{-h}^0\tilde{z}_n^\top(\theta) S \tilde{z}_n(\theta) \mathrm{d}\theta=0$ , which ensures that the equality~\eqref{lem:eqBessel} holds. Otherwise, recalling that $\int_{-h}^0\tilde{z}_n^\top(\theta) S \tilde{z}_n(\theta) \mathrm{d}\theta > 0$ since $S\succ 0$, inequality~\eqref{lem:ineqBessel} holds and concludes the proof.
\end{proof}

\section{Proof of Lemma~\ref{lem:CN2n}}\label{app3}

\begin{proof}
First, let us introduce the functional
\begin{align}\label{eq:W}
    \mathcal{W}(\varphi) =& \mathcal{V}(\varphi) - \int_{-h}^0\varphi^\top(\theta) C^\top(\theta+h)W_2C\varphi(\theta)\dth\nonumber\\
    &- \varepsilon \int_{-h}^0 \begin{bsmallmatrix}\varphi(0)\\C\varphi(\theta)\end{bsmallmatrix}^\top  \begin{bsmallmatrix}\varphi(0)\\C\varphi(\theta)\end{bsmallmatrix}\dth,
\end{align}
where $\mathcal{V}$ is the complete Lyapunov-Krasovskii functional given by~\eqref{eq:V0}.
According to Lemma~\ref{lem:dV}, the time-derivative of $\mathcal{W}$ along the trajectories of system~\eqref{eq:tds} writes
\begin{equation}\label{eq:suppW}
\begin{array}{rcl}
    \dot{\mathcal{W}}(x_t) &=& - \begin{bsmallmatrix}x_t(0)\\Cx_t(-h)\end{bsmallmatrix}^\top \begin{bsmallmatrix} W_1+C^\top hW_2C & 0 \\ \ast &  W_3\end{bsmallmatrix} \begin{bsmallmatrix}\varphi(0)\\Cx_t(-h)\end{bsmallmatrix}\\
    && +\varepsilon\begin{bsmallmatrix}x_t(0)\\Cx_t(-h)\end{bsmallmatrix}^\top \begin{bsmallmatrix} \mathcal{H}(A)+C^\top C & B \\ \ast &  -I_{n_z}\end{bsmallmatrix} \begin{bsmallmatrix}x_t(0)\\Cx_t(-h)\end{bsmallmatrix}.
\end{array}
\end{equation}
Clearly, there exists a sufficiently small $\varepsilon>0$ such that $\dot{\mathcal{W}}(x_t)\leq 0$. Then, integrating~\eqref{eq:suppW} from $0$ to $\infty$ and assuming that system~\eqref{eq:tds} is GES yields $\mathcal{W}(x_0) \geq 0$, for any initial conditions $x_0=\varphi$ in $\mathcal{C}_{pw}(-h,0;\mathbb{R}^{n_z})$, which concludes the proof.
\end{proof}

\section{Proof of Lemma~\ref{lem:convergence}}\label{app1}

The next properties reflect the fact that $U$ has continuous and bounded second and fourth order derivatives on $[-h,0)$ (and $(0,h]$). Their bound are provided therein.
\begin{property}\label{prop:regul}
The Lyapunov matrix $U$ associated to $W$ defined in~\eqref{eq:U0} satisfies
\begin{align}
      \underset{\theta\in[-h,0)}{\mathrm{sup}}\!
      \norm{U^{(2)}(\theta)} \!=\!  \underset{\theta\in(0,h]}{\mathrm{sup}}\! \norm{U^{(2)}(\theta)} &\leq \rho \norm{W} ,\label{eq:Uregul1}
  \\
     \underset{\theta\in(0,h]}{\mathrm{sup}}\! \norm{U^{(4)}(\theta)} &\leq {\rho ^\prime \norm{W} } ,\label{eq:Uregul2}
\end{align}
with parameters $\rho,\rho^\prime$ given by
\begin{equation}\label{eq:rv}
    \rho  \!=\! \sqrt{n_x} e^{h\norm{\mathcal{M}}}\norm{\mathcal{M}^2\mathcal{N}^{-1}},\;
   \rho' \!=\! \sqrt{n_x} e^{h\norm{\mathcal{M}}}\norm{\mathcal{M}^4\mathcal{N}^{-1}},
\end{equation}
and matrices $\mathcal{M}$, $\mathcal{N}$ given by~\eqref{eq:MN0}.
\end{property}

\begin{proof}
Thanks to the equivalence of matrix norms, inequalities $\norm{M^\top}=\norm{M}\leq\norm{\mathrm{\mathrm{vec}}(M)}\leq\sqrt{p}\norm{M}$ hold, for any square $M$ of dimension $p$. Then, for all $\theta$ in $(0,h]$, we have
\begin{equation*}
\begin{aligned}
    \norm{U^{(k)}(\theta)} &\leq \left| \begin{bsmallmatrix}I_{n_x^2}&0\end{bsmallmatrix}e^{\theta\mathcal{M}}\mathcal{M}^k\mathcal{N}^{-1}\begin{bsmallmatrix}-\mathrm{vec}(W)\\0\end{bsmallmatrix}\right|,\\
    &\leq \norm{e^{\theta\mathcal{M}}}\norm{\mathcal{M}^k\mathcal{N}^{-1}}\norm{\mathrm{vec}(W)},\\
    &\leq \sqrt{n_x} \norm{e^{\theta\mathcal{M}}} \norm{\mathcal{M}^k\mathcal{N}^{-1}}\norm{W},
\end{aligned}
\end{equation*}
where $U^{(k)}$ stands for the $k^{th}$ derivatives of function $U$.
Moreover, recalling the definition of exponential matrices, i.e.  $e^{\theta\mathcal{M}}=\sum_{k=0}^\infty\frac{(\theta\mathcal{M})^k}{k!}$, an upper bound of $|e^{\theta \mathcal M}|$ is obtained as follows
\begin{equation*}
    \norm{e^{\theta\mathcal{M}}} \!\leq\! \sum_{k=0}^\infty \!\! \left| \! \frac{(\theta\mathcal{M})^k}{k!} \! \right| \!\leq\! \sum_{k=0}^\infty \!\! \frac{|\theta|^k|\mathcal{M}|^k}{k!} \!\leq\! \sum_{k=0}^\infty\!\! \frac{h^k |\mathcal{M}|^k}{k!} \!\!=\! e^{h\norm{\mathcal{M}}}\!,
\end{equation*}
which yields the results~\eqref{eq:Uregul1} and \eqref{eq:Uregul2}. 
\end{proof}

\subsection{Proof of item (i) of Lemma~\ref{lem:convergence}}

\begin{proof} The objective of the proof is to provide an upper bound of the norm of the approximation error $\tilde U_{1,n}$, which depends explicitly on order $n$. To do so, let us first rewrite the expression of this error as follows
\begin{equation*}
\begin{array}{lcl}
    \tilde{U}_{1,n}(\theta) 
    &=&\displaystyle U(h\!+\!\theta)B\!-\! \underset{k=0}{\overset{n-1}{\sum}}\mathbf{U}_{1}^k l_k\!\left(\!\frac{\theta+h}{h}\!\right),
\end{array}
\end{equation*}
with $\mathbf{U}_{1}^k$ being the projection of $U(\theta+h)B$ onto the $k^{th}$ Legendre polynomial, that is
\begin{align}\label{eq:ak1}
    \mathbf{U}_{1}^k &= \frac{2k+1}{h}\int_{-h}^0 U(\theta+h)B l_k\!\left(\!\frac{\theta+h}{h}\!\right)\dth,\nonumber\\
    &= (2k+1)\int_0^1 U(h\theta) l_k(\theta)\dth.
\end{align}
Using the differentiation rule~\eqref{eq:derlk}, the previous expression can be rewritten as 
\begin{equation}\label{eq:process1}
    \mathbf{U}_{1}^k = \frac{1}{2} \int_0^1 U (h\theta)B\! \left(l_{k+1}^\prime(\theta)-l_{k-1}^\prime(\theta)\right) \dth.
\end{equation}
Then, an integration by parts yields
\begin{equation}\label{eq:process2}
    \mathbf{U}_{1}^k = \frac{h}{2} \int_0^1 U^\prime(h\theta)B \left(l_{k-1}(\theta)-l_{k+1}(\theta)\right) \dth,
\end{equation}
where we have used $l_{k+1}(0)=l_{k-1}(0)$ and $l_{k+1}(1)=l_{k-1}(1)$ (see~\eqref{eq:legeval}), which cancels the first terms of the integration by parts. Repeating this operation, we get
\begin{equation*}
    \begin{array}{lcl}
    \mathbf{U}_{1}^k\!  &=& \displaystyle \frac{h^2}{4(2k\!-\!1)} \int_0^1 U^{\prime\prime}(h\theta)B\left(l_{k-2}(\theta)-l_k(\theta)\right)\dth\\
    &&-\displaystyle \frac{h^2}{4(2k+3)}\int_0^1 U^{\prime\prime}(h\theta)B\left(l_{k}(\theta)-l_{k+2}(\theta)\right)\dth.\\
    \end{array}
\end{equation*}
Then, 
Property \eqref{eq:boundlk2} ensures that
\begin{equation*}
    \norm{\mathbf{U}_{1}^k}  \leq\displaystyle   \frac{\sqrt{\frac{\pi}{2}}h^2\norm{B}}{2\sqrt{k-2}(2k-1)}\int_0^1 \frac{\norm{U^{\prime\prime}(h\theta)}}{\sqrt{\theta(1-\theta)}}\dth.
\end{equation*}
Using \eqref{eq:Uregul1}, $(2k\!-\!1)\geq 2(k-2)$ and $\int_0^1 \frac{\dth}{\sqrt{\theta(1-\theta)}} = \pi$, the following upper bound is obtained
\begin{equation*}
    \norm{\mathbf{U}_{1}^k} \leq\displaystyle  
   \frac{\rho (\frac{\pi}{2})^\frac{3}{2}h^2\norm{B}\norm{W}}{2(k-2)^\frac{3}{2}}, \quad \forall k\geq 3.
\end{equation*}
Applying now \eqref{eq:boundlk} ensures that for any integer $N\geq n$ and for all $\theta$ in $[-h,0]$
\begin{equation*}
     \norm{\underset{k=n}{\overset{N}{\sum}} \mathbf{U}_{1}^k l_k\!\left(\!\frac{h\!+\!\theta}{h}\!\right)}
    \leq \underset{k=n}{\overset{N}{\sum}} \norm{\mathbf{U}_{1}^k}\leq  \sum_{k=n}^N \frac{\varrho_1 \norm{W}}{2(k-2)^\frac{3}{2}},
\end{equation*}
denoting $\varrho_1=\rho(\frac{\pi}{2})^\frac{3}{2}h^2\norm{B}$. Finally, using an integral over estimation of the sum, we obtain 
\begin{equation*}
\norm{\underset{k=n}{\overset{N}{\sum}} \mathbf{U}_{1}^k l_k\!\left(\!\frac{\theta\!+\!h}{h}\!\right)} \leq \displaystyle \varrho_1 \norm{W} \left(\frac{1}{\sqrt{n-3}}- \frac{1}{\sqrt{N-3}}\right).
\end{equation*}
We conclude that the approximation error $\tilde{U}_{1,n}(\theta)=\underset{k=n}{\overset{\infty}{\sum}} \mathbf{U}_{1}^k l_k\!\left(\frac{\theta+h}{h}\right)$ is uniformly bounded as in~\eqref{eq:Wang} and converges also uniformly to zero as $n$ tends to infinity.
\end{proof}

\subsection{Proof of item (ii) of Lemma~\ref{lem:convergence}}

\begin{proof} The objective of the proof is to demonstrate the uniform convergence towards zero of
\begin{equation*}
\begin{array}{lcl}
    \tilde{U}_{1,n}^\prime(\theta) &=&\displaystyle \frac{\mathrm{d}}{\dth} \left(U(\theta+h)B \!-\! \mathbf{U}_{1,n} \ell_n\!\left(\!\frac{\theta\!+\!h}{h}\!\right) \right),\\
    &=&\displaystyle U^\prime(\theta+h)B\!-\! \frac{1}{h} \underset{k=0}{\overset{n-1}{\sum}}\mathbf{U}_{1}^k l_k^\prime\!\left(\!\frac{\theta\!+\!h}{h}\!\right),
\end{array}
\end{equation*}
with $\mathbf{U}_{1}^k$ given by~\eqref{eq:ak1}.
Here, we repeat the process~\eqref{eq:process1}-\eqref{eq:process2} four times successively to obtain
\begin{equation*}
    \begin{array}{lcl}
    \mathbf{U}_{1}^k \!\! &=& \!\! \displaystyle \frac{h^4}{2^4(2k\!-\!5)(2k\!-\!3)(2k\!-\!1)} \int_0^1\!\! U^{(4)}(h\theta)Bl_{k-4}(\theta)\dth\\
    &-&\!\! \displaystyle \frac{h^4}{2^4(2k\!-\!5)(2k\!-\!3)(2k\!-\!1)} \int_0^1\!\! U^{(4)}(h\theta)Bl_{k-2}(\theta)\dth\\
    &-&\!\! \displaystyle \frac{h^4}{2^4(2k\!-\!3)(2k\!-\!1)^2} \int_0^1\!\! U^{(4)}(h\theta)Bl_{k-2}(\theta)\dth\\
    &+&\!\! \displaystyle \frac{h^4}{2^4(2k\!-\!3)(2k\!-\!1)^2} \int_0^1\!\! U^{(4)}(h\theta)Bl_k(\theta)\dth\\
    &-&\!\! \displaystyle \frac{h^4}{2^4(2k\!-\!1)^2(2k\!+\!1)} \int_0^1\!\! U^{(4)}(h\theta)Bl_{k-2}(\theta)\dth\\
    &+&\!\! \displaystyle \frac{h^4}{2^4(2k\!-\!1)^2(2k\!+\!1)} \int_0^1\!\! U^{(4)}(h\theta)Bl_k(\theta)\dth\\
    &-&\!\! \displaystyle \frac{h^4}{2^4(2k\!-\!1)(2k\!+\!1)(2k\!+\!3)} \int_0^1\!\! U^{(4)}(h\theta)Bl_{k-2}(\theta)\dth\\
    &+&\!\! \displaystyle \frac{h^4}{2^4(2k\!-\!1)(2k\!+\!1)(2k\!+\!3)} \int_0^1\!\! U^{(4)}(h\theta)Bl_k(\theta)\dth\\
    &+&\!\! \displaystyle \frac{h^4}{2^4(2k\!-\!1)(2k\!+\!1)(2k\!+\!3)} \int_0^1\!\! U^{(4)}(h\theta)Bl_k(\theta)\dth\\
    &-&\!\! \displaystyle \frac{h^4}{2^4(2k\!-\!1)(2k\!+\!1)(2k\!+\!3)} \int_0^1\!\! U^{(4)}(h\theta)Bl_{k+2}(\theta)\dth\\
    &+&\!\! \displaystyle \frac{h^4}{2^4(2k\!+\!1)(2k\!+\!3)^2} \int_0^1\!\! U^{(4)}(h\theta)Bl_k(\theta)\dth\\
    &-&\!\! \displaystyle \frac{h^4}{2^4(2k\!+\!1)(2k\!+\!3)^2} \int_0^1\!\! U^{(4)}(h\theta)Bl_{k+2}(\theta)\dth\\
    &+&\!\! \displaystyle \frac{h^4}{2^4(2k\!+\!3)^2(2k\!+\!5)} \int_0^1\!\! U^{(4)}(h\theta)Bl_k(\theta)\dth\\
    &-&\!\! \displaystyle \frac{h^4}{2^4(2k\!+\!3)^2(2k\!+\!5)} \int_0^1\!\! U^{(4)}(h\theta)Bl_{k+2}(\theta)\dth\\
    &-&\!\! \displaystyle \frac{h^4}{2^4(2k\!+\!3)(2k\!+\!5)(2k\!+\!7)} \int_0^1\!\! U^{(4)}(h\theta)Bl_{k+2}(\theta)\dth\\
    &+&\!\! \displaystyle \frac{h^4}{2^4(2k\!+\!3)(2k\!+\!5)(2k\!+\!7)} \int_0^1\!\! U^{(4)}(h\theta)Bl_{k+4}(\theta)\dth,\\
    \!\!&=& \!\! \displaystyle \left(\frac{h}{2}\right)^{\!4}\!\frac{\displaystyle \int_0^1\!\! U^{(4)}(h\theta)B\left(\sum_{i=0}^4\binom{4}{i}\alpha_{k,i}l_{k-4+2i}(\theta)\right)\dth}{
    (2k\!-\!5)(2k\!-\!3)(2k\!-\!1)},
    \end{array}
\end{equation*}
where $\alpha_{k,i}$ are positive coefficients whose expression is omitted for simplicity but which verify $\norm{\alpha_{k,i}}\leq 1$.
Therefore, an upper bound of the norm of $\mathbf{U}_{1}^k$ can be derived using property \eqref{eq:boundlk2} on the $2^4$ terms, yielding 
\begin{equation*}
    \norm{\mathbf{U}_{1}^k}  \leq\displaystyle   \frac{\frac{1}{2}\sqrt{\frac{\pi}{2}}h^4\norm{B}}{(2k\!-\!5)(2k\!-\!3)(2k\!-\!1)}\!\int_0^1\!\! \frac{\norm{U^{(4)}(h\theta)}}{\sqrt{(k\!-\!4)\theta(1-\theta)}}\dth.
\end{equation*}
Using \eqref{eq:Uregul2}, $(2k-5)\leq 2(k-4)$ and $\int_0^1 \frac{\dth}{\sqrt{\theta(1-\theta)}} = \pi$, the following upper bound is obtained
\begin{equation}\label{eq:boundUk}
    \norm{\mathbf{U}_{1}^k} \leq\displaystyle  
   \frac{\rho' (\frac{\pi}{2})^\frac{3}{2}h^4\norm{B}\norm{W}}{2(k-4)^\frac{3}{2}(2k\!-\!3)(2k\!-\!1)}, \quad \forall k\geq 5.
\end{equation}
We denote $\varrho_2=\frac{1}{2}\rho'(\frac{\pi}{2})^\frac{3}{2}h^3\norm{B}$ and we use this upper bound to obtain the result. The application of~\eqref{eq:boundderlk} and~\eqref{eq:boundUk} gives, for any integer $N\geq n \geq 5$ and for all $\theta$ in $[-h,0]$, 
\begin{equation*}
\begin{aligned}
    \norm{\frac{1}{h} \underset{k=n}{\overset{N}{\sum}} \mathbf{U}_{1}^k l_k^\prime\!\left(\!\frac{\theta\!+\!h}{h}\!\right)}
    &\leq \underset{k=n}{\overset{N}{\sum}} \frac{\norm{\mathbf{U}_{1}^k}k(k+1)}{h},\\
    &\leq  \sum_{k=n}^N \frac{\varrho_2\norm{W}k(k+1)}{(k-4)^\frac{3}{2}(2k\!-\!3)(2k\!-\!1)}.
\end{aligned}
\end{equation*}
Noticing that $\frac{k(k+1)}{(2k-1)(2k-3)}<\frac{1}{2}$ for all $k\geq 5$, we obtain
\begin{equation*}
    \norm{\frac{1}{h} \underset{k=n}{\overset{N}{\sum}} \mathbf{U}_{1}^k l_k^\prime\!\left(\!\frac{\theta\!+\!h}{h}\!\right)}
    \leq  \sum_{k=n}^N \frac{\varrho_2\norm{W}}{2(k-4)^\frac{3}{2}}.
\end{equation*}
Finally, an integral over estimation of the sum leads to
\begin{equation*}
\norm{\frac{1}{h} \underset{k=n}{\overset{N}{\sum}} \mathbf{U}_{1}^k l_k^\prime\!\left(\!\frac{\theta\!+\!h}{h}\!\!\right)} \!\leq\! \displaystyle \varrho_2\norm{W}\! \left(\!\frac{1}{\sqrt{n\!-\!5}}\!-\! \frac{1}{\sqrt{N\!-\!5}}\!\right)\!.
\end{equation*}
We conclude that the approximation error $\tilde{U}_{1,n}'(\theta)\!=\! \displaystyle\frac{\mathrm{d}}{\dth}\underset{k=n}{\overset{\infty}{\sum}} \mathbf{U}_{1}^k l_k\!\left(\frac{\theta+h}{h}\right)$ is uniformly bounded as in \eqref{eq:Wang2} and converges to zero as $n$ tends to infinity.
\end{proof}

\vspace{-0.2cm}
\subsection{Proof of item (iii) of Lemma~\ref{lem:convergence}}
\vspace{-0.2cm}

\begin{proof} The objective of the proof is to demonstrate the uniform convergence towards zero of
\begin{equation*}
\begin{array}{lcl}
    \tilde{U}_{2,n}(\theta) &=&\displaystyle B^\top U(\theta)B \!-\! \mathbf{U}_{2,n} \ell_n\!\left(\!\frac{\theta\!+\!h}{2h}\!\right),\\
    &=&\displaystyle B^\top U(\theta)B\!-\! \underset{k=0}{\overset{n-1}{\sum}}\mathbf{U}_{2}^k l_k\!\left(\!\frac{\theta\!+\!h}{h}\!\right),
\end{array}
\end{equation*}
with $\mathbf{U}_{2}^k$ the $k$-th coefficient of matrix $\mathbf{U}_{2,n}$ given by
\begin{align}\label{eq:ak2}
    \mathbf{U}_{2}^k &= \frac{2k+1}{2h}\int_{-h}^h B^\top U(\theta)B l_k\!\left(\!\frac{\theta\!+\!h}{2h}\!\right)\dth,\nonumber\\
    &= (2k+1)\int_0^1 B^\top U\!\big(h(2\theta-1)\big)B l_k(\theta)\dth.
\end{align}
Using the differentiation rule~\eqref{eq:process1} and integration by parts~\eqref{eq:process2} on both intervals $(0,\frac{1}{2})$ and $(\frac{1}{2},1)$, we obtain
\begin{equation*}
    \mathbf{U}_{2}^k = h \int_0^1 B^\top U^\prime\!\big(h(2\theta-1)\big)B\! \left(l_{k-1}(\theta)-l_{k+1}(\theta)\right) \dth,
\end{equation*}
since $l_{k+1}(\theta)=l_{k-1}(\theta)$ for $\theta\in\{0,1\}$ is ensured by~\eqref{eq:legeval} and since the continuity of $U\!\big(h(2\theta-1)\big)$ at $\theta=\frac{1}{2}$ is ensured by Property~\ref{prop:DeltaU}~(i). 
Then, repeating this operation, we get to
\begin{equation*}
    \begin{aligned}
    \mathbf{U}_{2}^k\!  &= - \displaystyle \frac{h}{2(2k\!-\!1)}B^\top \Delta U^\prime(0) B \left(l_{k}(1/2)-l_{k-2}(1/2)\right)\\
    & +\! \displaystyle \frac{h^2}{(2k\!-\!1)} \!\int_0^1\!\! B^\top U^{\prime\prime}\!\big(h(2\theta\!-\!1)\big)B\left(l_{k-2}(\theta)\!-\!l_k(\theta)\right)\!\dth\\
    &+ \displaystyle \frac{h}{2(2k\!+\!3)}B^\top \Delta U^\prime(0) B \left(l_{k+2}(1/2)-l_{k}(1/2)\right)\\
    &-\! \displaystyle \frac{h^2}{(2k\!+\!3)}\! \int_0^1\!\! B^\top U^{\prime\prime}\!\big(h(2\theta\!-\!1)\big)B\left(l_{k}(\theta)\!-\!l_{k+2}(\theta)\right)\!\dth\!.
\end{aligned}
\end{equation*}
where $\Delta U^\prime(0):= \lim_{\epsilon\rightarrow 0} \left( U'(\epsilon) - U'(-\epsilon) \right)$. 
Then, an upper bound of the norm of $\mathbf{U}_{2}^k$ can be derived by the use of property \eqref{eq:boundlk2} for $\theta\in[0,1]$ and especially for $\theta=\frac{1}{2}$, yielding 
\begin{equation*}
    \norm{\mathbf{U}_{2}^k}  \!\leq\! \displaystyle \frac{\sqrt{2\pi}h\norm{B}^2}{\sqrt{k\!-\!2}(2k\!-\!1)}\!\! \left(\!\!\norm{\Delta U^\prime(0)}\!+\!h\!\!\int_0^1\!\! \frac{\norm{U^{\prime\prime}\!\big(\!h(2\theta\!-\!1)\!\big)\!}}{\sqrt{\theta(1-\theta)}}\!\dth\!\!\right)\!.
\end{equation*}
Thanks to Property~\ref{prop:DeltaU}~(ii), upper bound \eqref{eq:Uregul1}, inequality $(2k\!-\!1)\leq 2(k\!-\!2)$ and $\int_0^1 \frac{\dth}{\sqrt{\theta(1-\theta)}} = \pi$, we have 
\begin{equation*}
    \norm{\mathbf{U}_{2}^k}  \!\leq\! \displaystyle \frac{\sqrt{2\pi}h\norm{B}^2}{2(k-2)^\frac{3}{2}} (1\!+\!\rho\pi h)\norm{W} = \frac{\varrho_3\norm{W}}{2(k-2)^\frac{3}{2}}, \ \forall k\geq 3 .
\end{equation*}
where notation $\varrho_3:= \sqrt{2\pi} ( 1 + \rho\pi h) h\norm{B}^2$ is introduced. 
For the same reasons stated in the proof of Lemma~\ref{lem:convergence}~(i), for any integer $N\geq n$ and for all $\theta$ in $[0,1]$, the following upper bound is obtained
\begin{equation*}
\norm{\underset{k=n}{\overset{N}{\sum}} \mathbf{U}_{2}^k l_k\!\left(\!\frac{\theta\!+\!h}{h}\!\right)} \leq \displaystyle \varrho_3 \norm{W} \left(\frac{1}{\sqrt{n-3}}- \frac{1}{\sqrt{N-3}}\right).
\end{equation*}
We conclude that the series $\tilde{U}_{2,n}(\theta)=\underset{k=n}{\overset{\infty}{\sum}} \mathbf{U}_{2,n}^k l_k\!\left(\frac{\theta+h}{h}\right)$ exists, converges to zero as $n$ tends to infinity and that inequality~\eqref{eq:Wangbar} holds.
\end{proof}

\bibliographystyle{plain}        
\bibliography{autosam}

\end{document}